\newtheorem{theorem}{Theorem}[section]
\newtheorem{corollary}[theorem]{Corollary}
\newtheorem{lemma}[theorem]{Lemma}
\newtheorem{definition}[theorem]{Definition}
\newtheorem{proposition}[theorem]{Proposition}
\newtheorem{remark}[theorem]{Remark}
\newcommand{\beqa}{\begin{eqnarray*}}
	\newcommand{\eeqa}{\end{eqnarray*}}
\newcommand{\field}[1]{\mathbb{#1}}
\newcommand{\bR}{\field{R}}
\newcommand{\bN}{\field{N}}
\def\al{\alpha}
\def\be{\beta}
\def\la{\lambda}
\def\om{\omega}
\def\cF{\mathcal{F}}
\def\cS{\mathcal{S}}
\def\cG{\mathcal{G}}
\def\cU{\mathcal{U}}
\def\cJ{\mathcal{J}}
\def\tW{\widetilde{W}}
\def\rd{\bR^d}
\def\rdd{{\bR^{2d}}}
\def\intrd{\int_{\rd}}
\def\intrdd{\int_{\rdd}}
\def\<{\left<}
\def\>{\right>}
\def\mv1{M_v^1}
\newcommand{\abs}[1]{\left|#1\right|}
\newcommand{\norm}[1]{\left\|#1\right\|}
\def\Ren{\mathbb{R}^d}
\def\f{\varphi}
\def\Sn2{S_{2}(L^{2}(\Ren))}
\def\S1{S_{1}(L^{2}(\Ren))}
\def\sig00{\sigma_{0,0}}
\def\la{\langle}
\def\ra{\rangle}
\DeclareMathOperator*{\Opt}{Op_{\tau}}
\DeclareMathOperator*{\tr}{tr}
\def\FO{\mathbb{S}_{0}} 
\def\MO{\mathbb{S}^\prime_{0}} 
\def\FA{\mathcal{S}_{0}} 
\def\MD{\mathcal{S}^\prime_{0}} 
\def\tTFS{\pi^\tau} 
\def\omtTFS{\pi^{1-\tau}} 
\def\zTFS{\pi^0} 
\newcommand{\dual}[2]{\la #1 {,}#2 \ra} 
\newcommand{\dualOP}[2]{\la #1 {,}#2 \ra} 
\def\TC{\cJ^1} 
\def\HS{\cJ^2} 
\def\BO{\cJ^\infty} 
\def\tV{V^\tau}
\def\zV{V^0} 
\def\ohV{V^{\frac12}} 
\newcommand{\K}[1]{K_{#1}}
\def\tW{W_\tau}
\def\omtW{W_{1-\tau}}
\def\ohW{W_{1/2}}
\def\FtW{\mathcal{F}_{\tW}}
\def\FohW{\cF_{W_{1/2}}}
\def\FomtW{\cF_{W_{1-\tau}}}
\def\Sch{\mathcal{S}}
\def\Tem{\Sch'}
\newcommand{\tsym}[1]{\mathrm{a}^{#1}_\tau}
\newcommand{\seqj}[1]{\{#1_j\}_j}
\newcommand{\seqn}[1]{\{#1_n\}_n}
\newcommand{\sumj}{\sum_{j=1}^\infty}
\newcommand{\sumk}{\sum_{k=1}^\infty}
\newcommand{\sumn}{\sum_{n=1}^\infty}
\newcommand{\normFA}[1]{\norm{#1}_{\FA}}
\newcommand{\normFO}[1]{\norm{#1}_{\mathbb{S}_0}}
\newcommand{\normMO}[1]{\norm{#1}_{\mathbb{S}'_0}}
\newcommand{\netn}[1]{\{#1_n\}_n}
\newcommand{\limnetn}{\lim_{n\to+\infty}}
\newcommand{\wsconvMOn}[2]{#1_n\overset{w-\ast}{\underset{n}{\longrightarrow}}#2\quad\text{in}\quad\MO}
\newcommand{\wsconvMDn}[2]{#1_n\overset{w-\ast}{\underset{n}{\longrightarrow}}#2\quad\text{in}\quad\MD(\rdd)}
\DeclareMathOperator*{\wslimMOn}{\textit{w}-\ast-lim_\textit{n}}
\DeclareMathOperator*{\SR}{SR}
\def\tSR{{\SR}^\tau}
\newcommand{\tQ}[2]{Q^\tau_{#1}(#2)}
\DeclareMathOperator*{\Opomt}{Op_{1-\tau}}
\newcommand{\GM}[1]{G^\f_{#1}}
\def\twistmatrix{\Theta}
\DeclareMathOperator{\twistconv}{\natural_\twistmatrix}
\def\wFO{\mathbb{M}_s^1}
\def\wFA{M^1_{v_s\otimes v_s}}
\begin{document}
	\begin{abstract} 
	We investigate the $\tau$-quantizations and Cohen's class distributions of a suitable class of trace-class operators, called Feichtinger's operators, and show that it is a convenient substitute for the class of Schwartz operators. Many well-known concepts and results for functions in time-frequency analysis have an operator-analog in our setting, e.g. that Cohen's classes are convolutions of Wigner functions with distributions or characterization of the class of Schwartz operators as an intersection of weighted variants of the class of Feichtinger operators. 
	\end{abstract}
	
	\title[$\tau$-quantization and $\tau$-Cohen classes of Feichtinger operators]{$\tau$-quantization and $\tau$-Cohen classes distributions  of Feichtinger operators}
	\author{Federico Bastianoni}
	\address{Dipartimento di Scienze Matematiche, Politecnico di Torino, corso
		Duca degli Abruzzi 24, 10129 Torino, Italy}
	\email{federico.bastianoni@polito.it}
	\thanks{}
	\author{Franz Luef}
	\address{Department of Mathematics, NTNU Norwegian University of Science and Technology, NO-7491 Trondheim, Norway
	}
	\email{franz.luef@ntnu.no}
	\thanks{}

	\subjclass[2010]{42B35;46E35;47G30;47B10}
	\keywords{Cohen's class, $\tau$-quantization, Feichtinger's algebra, Wigner distribution}
	\date{}
\maketitle

\tableofcontents

\section{Introduction}

There is a vast literature on the boundedness of pseudodifferential operators for certain classes of symbols in various quantization schemes along the lines of H\"ormander classes or alternatively using Sj\"ostrand's class or Shubin's classes, e.g. \cite{BasCorNic20,EleCharly2003,Wignersharp2018,GroStr2007,ToftquasiBanach2017}. In the present work, we put our focus on Shubin's $\tau$-quantization and the associated time-frequency representations, the $\tau$-Cohen classes. 

Our approach to this circle of ideas is based on the framework of quantum harmonic analysis with the goal to lift the well-known results concerning functions to an appropriate class of functions, which we call {\it Feichtinger operators}, $\FO$, and which is the operator analog of the well-known Feichtinger algebra $\FA$. 

We also discuss the relation between Feichtinger operators $\FO$ and the class of Schwartz operators introduced by Keyl, Kiukas and Werner in \cite{KeyKiuWer2016}. There the idea is put forward that one should look for analogs of function spaces in the setting of classes of operators, which has been realized in the case of Sobolev spaces in \cite{Laf2022} and for modulation spaces in \cite{DoeLueMcnSkr2022}.

For $\tau\in[0,1]$ the $\tau$-quantization of a symbol $a\in\Tem(\rdd)$, the space of tempered distributions, is given by 
\begin{equation}\label{Eq-Def-Opt-formal-integral}
	\Opt (a)f(t)\coloneqq\intrdd e^{2\pi i(t-y)\xi}a((1-\tau)t+\tau y,\xi)f(y)\,dy d\xi~~f\in\Sch(\rd),
\end{equation}
where the operator $\Opt(a)$ is understood to be defined in the weak sense. A well-known fact is that one can relate $\la \Opt(a)f,g\ra$ to a time-frequency representation, $\tW(f,g)$, the cross-$\tau$-Wigner distribution of $f$ and $g$:  
\begin{equation*}
	\la \Opt(a)f,g\ra=\la a,\tW(g,f)\ra,\qquad\text{for all}\quad f,g\in\Sch(\rd).
\end{equation*}

Given an operator $S$, we denote by $\tsym{S}$ its $\tau$-symbol, i.e. the tempered distribution such that $\Opt\left(\tsym{S}\right)=S$ and $\Opt$ is called the $\tau$-Shubin quantization.  For $f,g\in L^2(\rd)$ we denote the rank-one operator by $f\otimes g$ and note that $\tsym{f\otimes g}=W_\tau(g,f)$, i.e. there is an intrinsic relation between quantization schemes and time-frequency representations. 

We show that for well-behaved operators, e.g. trace class operators or Feichtinger operators, this relation might be extended to operators. Recall that Wigner in his ground-breaking work on quasi-probability distributions introduced the cross-Wigner distribution for certain classes of operators \cite{Wigner1932}, which was later extended to more general classes of operators by Moyal in \cite{Moy1949}. 

Let $S$ be a continuous operator between the Feichtinger algebra $\FA$ and its continuous dual space $\MD$. We denote by $\K{S}$ the kernel of $S$, which exists by Feichtinger's kernel theorem and is a mild distribution on $\mathbb{R}^{2d}$. 

We define Feichtinger operators, $\FO$, to be the following class of continuous and linear operators $\FO\coloneqq S\colon \MD(\rd)\to\FA(\rd)$ that map norm bounded w-$\ast$ convergent sequences in $\MD$ into norm convergent sequences in $\FA$. In \cite{FeiJak2022} it was shown that these are precisely the linear continuous operators from $\MD$ to $\FA$ that have a kernel in Feichtinger's algebra, the so-called inner kernel theorem. 

One of our main tools is that Feichtinger operators have a nice spectral decomposition. If $S$ is in $\FO$, then there exist two (non-unique) sequences $\seqn{f},\seqn{g}\subseteq\FA(\rd)$ such that
	\begin{equation*}
		S=\sumn f_n\otimes g_n,\qquad\sumn\normFA{f_n}\normFA{g_n}<\infty, \qquad\K{S}=\sumn\K{f_n\otimes g_n}.
	\end{equation*}
Hence, Feichtinger operators are trace class operators and we can compute their trace as follows $\tr(S)=\intrd\K{S}(x,x)\,dx$. In \cite{GosGos2021} operators having such a decomposition have been studied and called Feichtinger states in case $\tr(S)=1$, but there the link between these operators and the work \cite{FeiJak2022} was not established, which is one of our main observations.

Then the $\tau$-Wigner distribution of $S$ is defined in the following way
\begin{equation}\label{Eq-Def-tauWigner-operators}
	\tW S(x,\om)\coloneqq\intrd e^{-2\pi i t\om}\K{S}(x+\tau t,x-(1-\tau)t)\,dt.
\end{equation}
Our key observation is the following identity:
\begin{equation*}
\dual{a}{\tW S}=\mathrm{tr}(\Opt(a)S^*)=:\dualOP{\Opt(a)}{S},    
\end{equation*}
for $S$ in $\FO$ or $\TC$, and $\tW S$ is the $\tau$-Wigner distribution of $S$. Consequently, we interpret $\tW S$ as the $\tau$-quantization of an operator in $\FO$ or $\TC$. 

Note, that if $S$ is the rank-one operator $f\otimes g$ this becomes the aforementioned relation between the $\tau$-Wigner distribution and the Shubin $\tau$-transform.

Based on this framework we deduce operator analogs of well-known results on $\tau$-Wigner distributions and $\tau$-Shubin quantization, which indicates that this is a very convenient setting for this type of investigation. In addition, we extend the Cohen class of an operator, introduced in \cite{Luef1}, to the $\tau$-setting and show that it can be written as the convolution of the Wigner distribution of an operator with a distribution as in the function setting. 

We close our discussion with the introduction of weighted versions of $\FO$ and prove that the intersection of all these is the class of Schwartz operators in \cite{KeyKiuWer2016}. As in the case of functions, we hope that this global description of the Schwartz operators will also turn out to be useful in subsequent studies and it also hints at operator analogs of Gelfand-Shilov classes or other classes of test functions and the corresponding class of ultradistributions.

\section{Preliminaries}\label{Sec-Preliminaries}
In this paper, the parameter $\tau$ always belongs to $[0,1]$, even when not specified.

\subsection{A family of time-frequency representations}

For $x,\om\in\rd$ we define the translation and modulation operator by
\begin{equation*}
	T_x f(t)\coloneqq f(t-x),\qquad M_\om f(t)\coloneqq e^{2\pi i \om t}f(t),\qquad \forall t\in\rd,
\end{equation*}
respectively. Their composition is denoted by $\pi(x,\om)\coloneqq M_\om T_x$.

Given $\tau\in[0,1]$, the $\tau$-time-frequency shift ($\tau$-TFS) at $(x,\om)\in\rdd$ is defined to be
\begin{equation}
	\tTFS(x,\om)\coloneqq e^{-2\pi i \tau x\om}M_\om T_x= M_{(1-\tau)\om} T_x M_{\tau\om}.
\end{equation}
For $\tau=0$ we recover the usual time-frequency shifts $\zTFS=\pi$.
The following relations are consequences of elementary computations, which are left to the reader:
\begin{align*}
	\tTFS(x,\om)\tTFS(x',\om')&=e^{-2\pi i[(1-\tau)x\om'-\tau x'\om]}\tTFS(x+x',\om+\om'),\\
	\tTFS(x,\om)\tTFS(x',\om')&=e^{-2\pi i[x\om'-x'\om]}\tTFS(x',\om')\tTFS(x,\om),\\
	\tTFS(x,\om)^\ast&=\omtTFS(-x,-\om)=e^{-2\pi i(1-\tau)x\om}\pi(-x,-\om).
\end{align*}
In the present paper the symbol $\la\cdot{,}\cdot\ra$ either denotes the inner product in $L^2(\rd)$ or a duality pairing between a Banach space $X$ and its dual space $X^\prime$, which is compatible with the latter, i.e.  $\la\cdot{,}\cdot\ra$ is assumed to be linear in the first argument and conjugate-linear in the second one. In particular, the dual pairs considered in this work are $(L^2,L^2),\,(\MD,\FA),\,(\MO,\FO)$, respectively. 

Above, $\FA$ is the Feichtinger algebra \eqref{Eq-Def-FA}, for the definitions of $\FO$ and $\MO$ see the equations \eqref{Eq-Def-FO},\eqref{Eq-Def-MO} and \eqref{Eq-duality-FO-MO}. 
We introduce for $f,g\in L^2(\rd)$, or for any suitable dual pair, the $\tau$-short-time Fourier transform ($\tau$-STFT) of $f$ w.r.t $g$:
\begin{equation}
	\tV_g f(x,\om)\coloneqq\la f,\tTFS(x,\om)g\ra,\qquad\forall x,\om\in\rd.
\end{equation}
As can be easily verified, the mapping 
\begin{equation*}
	\tTFS\colon\rdd\to\cU(L^2(\rd)),
\end{equation*}
where $\cU(L^2(\rd))$ denotes the unitary operators on $L^2(\rd)$, is a projective representation of $\mathbb{R}^{2d}$ for any $\tau$. Consequently, $\tV$ is the wavelet transform associated to $\tTFS$, thus $\tV_g f$ is a continuous function.

\begin{remark}
	For $\tau=0$ we obtain the usual STFT $\zV_gf=V_gf$ and we have 
	\begin{equation}
		\tV_g f(x,\om)=e^{2\pi i \tau x\om} V_g f(x,\om).
	\end{equation}
	By the preceding identity, we have that $\ohV_g f$ is the cross-ambiguity function of $f$ and $g$:
	\begin{equation}
		\ohV_g f(x,\om)=A(f,g)(x,\om).
	\end{equation}
\end{remark}

We recall another frequently used time-frequency representation, the so-called cross-$\tau$-Wigner distribution of $f$ and $g$ in $L^2(\rd)$ defined by
\begin{equation}\label{Eq-Def-tauWigner-functions}
	\tW(f,g)(x,\om)\coloneqq \intrd e^{-2\pi i t\om}f(x+\tau t)\overline{g(x-(1-\tau)t)}\,dt.
\end{equation}
We aim to extend the definition of $\tW$ from functions to operators, see \eqref{Eq-Def-tauWigner-operators}.

\subsection{Basics of QHA and novel tools} \label{Subsec-tools-QHA}
In this subsection we introduce the basic definitions of quantum harmonic analysis (QHA) following the seminal work of Werner \cite{Werner1984}. 
\\
For $z\in\rdd$ and $A\in B(L^2(\rd))$ the translation of the operator $A$ by $z$ is 
\begin{equation}
	\alpha_z(A)\coloneqq \pi(z)A\pi(z)^\ast,
\end{equation}
which satisfies $\alpha_{z}\alpha_{z'}=\alpha_{z+z'}$. By the parity operator, we mean
\begin{equation}
	Pf(t)\coloneqq \check{f}(t)\coloneqq f(-t),
\end{equation}
for any $f\in L^2(\rd)$, which induces an involution of $A\in B(L^2(\rd))$:
\begin{equation}
	\check{A}\coloneqq PAP.
\end{equation}
We denote by $\TC$ the space of all trace class operators on $L^2(\rd)$. Given $a\in L^1(\rdd)$ and $S\in\TC$. The convolution between $a$ and $S$ is the operator
\begin{equation}
	a\star S\coloneqq S\star a\coloneqq \intrdd a(z)\alpha_z(S)\,dz,
\end{equation}
were the integral may be interpreted in the weak sense. For operators $S,T\in\TC$, their convolution is the function defined for every $z\in\rdd$ as
\begin{equation}
	S\star T(z)\coloneqq \tr\left(S\alpha_z(\check{T})\right).
\end{equation}
In this paper, we reserve the symbol $\otimes$ for rank-one operators. Namely, given $f,g\in L^2(\rd)$:
\begin{equation}
	(f\otimes g)\psi\coloneqq\la \psi,g\ra f,\qquad\forall\psi\in L^2(\rd).
\end{equation}
The kernel of an operator $S$ will always be denoted by $\K{S}$. Evidently, the kernel of the operator $f\otimes g$ is the tensor product \textit{of functions} $f(x)\overline{g(y)}$: 
\begin{equation*}
	(f\otimes g)\psi (t)=\la \psi,g\ra f(t)=\intrd f(t)\overline{g(x)}\psi(x)\,dx.
\end{equation*}
In the sequel we denote the tensor product of two functions by $f(x)g(y)$, we shall adopt the notation
\begin{equation}
	\K{f\otimes\overline{g}}(x,y)=f(x)\overline{g}(y).
\end{equation}
We now interpret \eqref{Eq-Def-tauWigner-functions} as the cross-$\tau$-Wigner distribution of the rank-one operator $f\otimes g$.
\\
Hence, it is natural to define the $\tau${\it -Wigner distribution} of an {\it operator} $S$ with kernel $\K{S}$ in the following way:
\begin{equation}\label{Eq-Def-tauWigner-operators}
	\tW S(x,\om)\coloneqq\intrd e^{-2\pi i t\om}\K{S}(x+\tau t,x-(1-\tau)t)\,dt.
\end{equation}
For $S\in\TC$ and $\tau\in[0,1]$, we define the Fourier-$\tau$-Wigner transform of $S$ to be:
\begin{equation}
	\FtW S(z)\coloneqq\tr\left(\tTFS(z)^\ast S\right),\qquad\forall z\in\rdd.
\end{equation}
For $\tau=1/2$ we recover the usual Fourier-Wigner transform \cite{Werner1984}.
\\
The $\tau$-spreading representation of $S\in B(L^2)$ is the decomposition 
\begin{equation}
	S=\intrdd h(z)\tTFS(z)\,dz,
\end{equation}
where the integral is understood in the weak sense. The function $h$ is called the $\tau$-spreading function of $S$. 
\\
In the following, we shall consider the $\tau$-spreading representation as a quantization scheme that assigns to a function an operator. Namely, $h\in L^1(\rdd)$ gets associated to the operator
\begin{equation}\label{Eq-Def-tSR-L1}
	\tSR (h)\coloneqq \intrdd h(z)\tTFS(z)\,dz.
\end{equation}
Let $\cF_\sigma$ denote the symplectic Fourier transform. In the following lemma we collect a number of important relations between these notions. The proofs are elementary computations and based on the spectral decomposition of the trace class operators $S$ and $T$ (\cite{SimonTrace}), which we leave to the interested reader.

\begin{lemma}\label{Lem-QHA-new-tools}
	Let $f,g,\in L^2(\rd)$, $S,T\in\TC$, $a\in L^1(\rdd)$ and $\tau\in[0,1]$. Then:
	\begin{itemize}
		\item[$(i)$]  $\cF_\sigma (\tW(f\otimes g))=\tV_g f$;
		\item[$(ii)$] $\FtW(f\otimes g)=\tV_gf$;
		\item[$(iii)$] $\tW S=\cF_\sigma\FtW S$;
		\item[$(iv)$] $\FtW S(x,\om)=e^{-2\pi i (1/2-\tau)x\om}\FohW S(x,\om)$;
		\item[$(v)$] $\cF_\sigma(S\star T)=\FtW S\cdot \FomtW T=\FomtW S\cdot \FtW T$;
		\item[$(vi)$] $\FtW(a\star S)=\cF_\sigma a\cdot \FtW S$;
		\item[$(vii)$] $\FtW S$ is the $\tau$-spreading function of $S$, i.e. $S=\intrdd \FtW S(z)\tTFS(z)\,dz$.
	\end{itemize}
\end{lemma}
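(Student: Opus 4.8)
The plan is to verify each identity first on a rank-one operator $f\otimes g$ by a direct computation, and then to extend it to arbitrary $S,T\in\TC$ through the canonical spectral decomposition $S=\sumn \lambda_n\,\phi_n\otimes\psi_n$ with $\seqn{\phi},\seqn{\psi}$ orthonormal and $\sumn|\lambda_n|<\infty$. All the operations that appear ($\tr$, $\tW$, $\FtW$, $\cF_\sigma$, and the two convolutions) are linear and continuous on $\TC$, so they commute with the absolutely convergent series and the reduction to the rank-one case is legitimate; I would state this once and use it throughout.

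For the base cases $(i)$ and $(ii)$ I would argue as follows. Since $\K{f\otimes g}(x,y)=f(x)\overline{g(y)}$, the definition \eqref{Eq-Def-tauWigner-operators} gives $\tW(f\otimes g)=\tW(f,g)$, the cross-$\tau$-Wigner distribution \eqref{Eq-Def-tauWigner-functions}; a change of variables inside the integral defining $\cF_\sigma$ then turns $\tW(f,g)$ into $\tV_g f$, which is $(i)$. For $(ii)$ I would use that $A(f\otimes g)=(Af)\otimes g$ for every $A\in B(\lrd)$ and that $\tr(u\otimes v)=\la u,v\ra$, whence $\FtW(f\otimes g)(z)=\tr\big(\tTFS(z)^\ast(f\otimes g)\big)=\la \tTFS(z)^\ast f,g\ra=\la f,\tTFS(z)g\ra=\tV_g f(z)$.

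The identities $(iii)$, $(iv)$, $(vii)$ are then essentially corollaries. Comparing $(i)$ and $(ii)$ gives $\cF_\sigma\tW(f\otimes g)=\FtW(f\otimes g)$, and applying $\cF_\sigma$ once more (it is an involution) yields $(iii)$ on rank-one operators. For $(iv)$ I would insert the elementary relation $\tTFS(x,\om)=e^{-2\pi i(\tau-1/2)x\om}\ohTFS(x,\om)$ into the definition of $\FtW$ and read off the phase factor. For $(vii)$ I would pair $\intrdd \FtW(f\otimes g)(z)\,\tTFS(z)\,dz=\intrdd \tV_g f(z)\,\tTFS(z)\,dz$ weakly against $h,k\in\lrd$; the resulting integral $\intrdd \la f,\tTFS(z)g\ra\,\overline{\la k,\tTFS(z)h\ra}\,dz$ equals $\la f,k\ra\,\overline{\la g,h\ra}=\la (f\otimes g)h,k\ra$ by the Moyal orthogonality relations for the $\tau$-STFT, which follow from the $\tau=0$ case since $\tV_g f=e^{2\pi i\tau x\om}V_g f$ differs from $V_g f$ only by a unimodular factor.

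The remaining identities $(v)$ and $(vi)$ are the two convolution theorems, and $(v)$ is the main obstacle. For $(vi)$ I would substitute $a\star S=\intrdd a(z)\,\pi(z)S\pi(z)^\ast\,dz$ into $\FtW(a\star S)(w)=\tr(\tTFS(w)^\ast(a\star S))$ and combine cyclicity of the trace with the covariance $\pi(z)\tTFS(w)\pi(z)^\ast=e^{-2\pi i\sigma(z,w)}\tTFS(w)$; the $z$-integral then factors out as $\cF_\sigma a(w)$, leaving $\FtW S(w)$. For $(v)$ the cleanest route is to first settle the symmetric case $\tau=1/2$, where $\cF_\sigma(S\star T)=\FohW S\cdot\FohW T$ is Werner's Fourier--Wigner convolution identity, which I would re-derive on rank-one operators from $S\star T(z)=\tr(S\alpha_z(\check T))$, and then transfer it to general $\tau$ by means of $(iv)$. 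The delicate point is the phase bookkeeping: writing $\FtW S=e^{-2\pi i(1/2-\tau)x\om}\FohW S$ and $\FomtW T=e^{+2\pi i(1/2-\tau)x\om}\FohW T$, the two unimodular factors cancel exactly, so $\FtW S\cdot\FomtW T=\FohW S\cdot\FohW T=\cF_\sigma(S\star T)$, and the $S\leftrightarrow T$ symmetry of the middle term gives the second equality. Checking that this cancellation is exact — that the correct pairing is $\tau$ with $1-\tau$ rather than $\tau$ with $\tau$ — is precisely where the noncommutativity of the quantization enters, and is the step I expect to require the most care.
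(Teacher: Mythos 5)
Your proposal is correct and follows exactly the route the paper indicates: the paper omits the details, stating only that the proofs are ``elementary computations'' based on the spectral decomposition of the trace class operators $S$ and $T$, which is precisely your rank-one reduction followed by direct verification (and your phase bookkeeping in $(iv)$ and $(v)$, pairing $\tau$ with $1-\tau$ so that the factors $e^{\mp 2\pi i(1/2-\tau)x\om}$ cancel, checks out). In effect you have supplied the proof the paper leaves to the interested reader.
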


We notice that if we consider the rank-one operator $S=f\otimes g$, then the assertions $(iii)$ and $(ii)$ of the previous lemma imply
\begin{equation}
	\tW(f,g)=\tW(f\otimes g)=\cF_\sigma\tV_g f.
\end{equation}

\subsection{$\tau$-quantization of functions}
The $\tau$-quantization of a symbol $a\in\Tem(\rdd)$, the space of tempered distributions, is formally given by
\begin{equation}\label{Eq-Def-Opt-formal-integral}
	\Opt (a)f(t)\coloneqq\intrdd e^{2\pi i(t-y)\xi}a((1-\tau)t+\tau y,\xi)f(y)\,dy d\xi,
\end{equation}
where $f\in\Sch(\rd)$. $\Opt(a)$ may be described rigorously in the weak sense: 
\begin{equation*}
	\la \Opt(a)f,g\ra=\la a,\tW(g,f)\ra,\qquad\forall f,g,\in\Sch(\rd).
\end{equation*}
Given an operator $S$, we denote by $\tsym{S}$ its $\tau$-symbol, i.e. the tempered distribution such that
\begin{equation*}
	\Opt\left(\tsym{S}\right)=S.
\end{equation*}

\begin{remark}
	Under suitable assumptions, for example $a\in L^1(\rdd)$, straightforward calculations give
	\begin{equation*}
		\Opt(a)=\intrdd \cF_\sigma a(z)\tTFS(z)\,dz,
	\end{equation*}
	and since also $\FtW \Opt(a)$ is the $\tau$-spreading function of $\Opt(a)$, we have 
	\begin{equation}
		a=\cF_\sigma\FtW\Opt(a).
	\end{equation}
	Hence, for $S\in\TC$ 
	\begin{equation}\label{Eq-Rem-tsym}
		\tsym{S}=\cF_\sigma\FtW S=\tW S.
	\end{equation}
\end{remark}

Given $a\in\MD(\rdd)$ and $f,g\in\FA(\rd)$, we recall the definition of cross-$\tau$-Cohen's class representation of $f$ and $g$, with kernel $a$:
\begin{equation}\label{Eq-Def-tau-Cohen-functions}
	\tQ{a}{f,g}\coloneqq a\ast \tW(f,g).
\end{equation}

\section{Feichtinger operators}\label{Section-Fei-Op} 
In this section we summarize some important results concerning a class of operators studied in \cite{FeiJak2022}. For such operators, introduced below, we adopt the name \textquotedblleft Feichtinger operators\textquotedblright\, for reasons which will become evident later.
\\
We recall that the Feichtinger algebra over $\rd$ \cite{Feichtinger-On-a-new-segal-alg-1981} is the Banach space
\begin{equation}\label{Eq-Def-FA}
	\FA(\rd)\coloneqq\{f\in L^2(\rd)\,|\, V_g f\in L^1(\rdd)\},
\end{equation}
for some $g\in L^2(\rd)\smallsetminus\{0\}$, endowed with the norm
\begin{equation*}
	\normFA{f}\coloneqq\norm{V_gf}_{L^1}=\intrdd \abs{V_gf(x,\om)}\,dxd\om.
\end{equation*}
We refer the reader to \cite{Jakobsen2018} for a detailed survey on $\FA(\rd)$. In this work, $\MD(\rd)$ denotes the conjugate-dual of $\FA(\rd)$. 
\begin{definition}
	The set of  \underline{Feichtinger operators} is defined to be
	\begin{align}
		\FO\coloneqq&\{S\colon \MD(\rd)\to\FA(\rd)\,|\,S\,\text{is linear, continuous and}\notag\\
		&\text{maps norm bounded w-$\ast$ convergent sequences in $\MD$}\label{Eq-Def-FO}\\
		&\text{into norm convergent sequences in $\FA$}\}.\notag
	\end{align}
\end{definition}
We adopt the following notation:
\begin{equation}\label{Eq-Def-MO}
	\MO\coloneqq B(\FA(\rd),\MD(\rd))
\end{equation}
and state the so called Outer Kernel Theorem \cite[Theorem 1.1]{FeiJak2022}:
\begin{theorem}\label{Th-OuterKernel}
	The Banach space $\MO$ is isomorphic to $\MD(\rdd)$ via the map $T\mapsto\K{T}$, where the relation between $T$ and its kernel $\K{T}$ is given by
	\begin{equation*}
		\dual{T f}{g}=\dual{\K{T}}{\K{g\otimes f}},\qquad\forall\,f,g,\in\FA(\rd).
	\end{equation*}
\end{theorem}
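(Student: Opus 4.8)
The plan is to obtain the isomorphism $\MO\cong\MD(\rdd)$ as a consequence of the fundamental tensor factorization of the Feichtinger algebra, combined with the abstract duality theory for projective tensor products. The structural input I would rely on — and which I regard as the heart of the matter — is the topological isomorphism
\begin{equation*}
	\FA(\rdd)\cong\FA(\rd)\hotimes\FA(\rd),
\end{equation*}
where $\hotimes$ denotes the projective tensor product and an elementary tensor $\phi\otimes\psi$ is realized as the function $(x,y)\mapsto\phi(x)\psi(y)$. Granting this factorization, everything else is a chain of canonical identifications in which one must only keep track of the sesquilinear conventions.

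First I would invoke the universal property of the projective tensor product: for Banach spaces $X,Y$ there is a natural topological isomorphism $(X\hotimes Y)'\cong B(X,Y')$, under which a functional $\Phi$ on $X\hotimes Y$ corresponds to the operator $T$ determined by $\dual{Tf}{g}=\Phi(f\otimes g)$, boundedness of the bilinear form $(f,g)\mapsto\Phi(f\otimes g)$ being equivalent to boundedness of $T$. Taking $X=Y=\FA(\rd)$ and using $\FA(\rd)'=\MD(\rd)$ gives
\begin{equation*}
	\bigl(\FA(\rd)\hotimes\FA(\rd)\bigr)'\cong B\bigl(\FA(\rd),\MD(\rd)\bigr)=\MO .
\end{equation*}
On the other hand, dualizing the factorization yields $\bigl(\FA(\rd)\hotimes\FA(\rd)\bigr)'\cong\FA(\rdd)'=\MD(\rdd)$. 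Composing the two identifications produces the desired topological isomorphism $\MO\cong\MD(\rdd)$, which I would denote $T\mapsto\K{T}$. Well-definedness of the forward map is immediate from the estimate $\abs{\dual{Tf}{g}}\le\norm{Tf}_{\MD}\norm{g}_{\FA}\le\norm{T}\,\norm{f}_{\FA}\norm{g}_{\FA}$, which exhibits $(f,g)\mapsto\dual{Tf}{g}$ as a bounded form.

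It then remains to check that this abstract isomorphism is implemented by the pairing in the statement, and this I would verify on elementary tensors. Since $\FA(\rd)$ is invariant under complex conjugation, the functions $\K{g\otimes f}$, i.e. $(x,y)\mapsto g(x)\overline{f(y)}$, run through a dense subspace of $\FA(\rdd)$ as $f,g$ range over $\FA(\rd)$, and these are precisely the elementary tensors in the factorization (with the second factor $\bar f$). Evaluating $\K{T}$ against such a function and tracing it back through the universal identification reproduces $\dual{Tf}{g}$, giving $\dual{\K{T}}{\K{g\otimes f}}=\dual{Tf}{g}$ as claimed. The conjugations are consistent: the pairing on $\rdd$ is conjugate-linear in its second slot, while $\K{g\otimes f}$ is linear in $g$ and conjugate-linear in $f$, so $\dual{\K{T}}{\K{g\otimes f}}$ is conjugate-linear in $g$ and linear in $f$, matching $\dual{Tf}{g}$.

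The main obstacle is really the first step: establishing (or importing) the tensor factorization $\FA(\rdd)\cong\FA(\rd)\hotimes\FA(\rd)$, which encodes the nuclearity-type behaviour of the Feichtinger algebra; the remaining duality is soft functional analysis. If one preferred a self-contained route avoiding the abstract tensor machinery, one could instead realize $\FA$ through Gabor frame expansions and reduce the kernel theorem to the corresponding statement on the sequence-space side $\ell^1(\zdd)$, but the projective-tensor argument is by far the most economical.
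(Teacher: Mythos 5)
Your proposal is correct and follows essentially the same route as the paper, which states this theorem without proof by citing \cite[Theorem 1.1]{FeiJak2022}: the argument there rests, exactly as yours does, on the factorization $\FA(\rdd)\cong\FA(\rd)\hotimes\FA(\rd)$ combined with the projective tensor product duality $(X\hotimes Y)'\cong B(X,Y')$. Your bookkeeping of the sesquilinear conventions (conjugate-linearity of the outer pairing in its second slot versus the conjugation built into $\K{g\otimes f}$) is consistent with the paper's conventions, so the identification $T\mapsto\K{T}$ comes out with the stated pairing.
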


The following statement goes under the name of Inner Kernel Theorem. We present it in our setting. 
To this end, we introduce the following notation: given $\sigma,\nu\in\MD(\rd)$, we denote by $\nu\widetilde{\otimes}\overline{\sigma}$ the unique element of $\MD(\rdd)$ such that
\begin{equation*}
	\dual{\nu\widetilde{\otimes}\overline{\sigma}}{\K{\psi\otimes\overline{\f}}}=\dual{\nu}{\psi}\overline{\dual{\sigma}{\overline{\f}}},\qquad\forall\,\psi,\f\in\FA(\rd).
\end{equation*}
We refer the reader to \cite[Theorem 1.3]{FeiJak2022}, Lemma 3.1 and Corollary 3.10, too.
\begin{theorem}\label{Th-InnerKernel}
	The space of Feichtinger operators $\FO$ is a Banach space if endowed with the norm of $B(\MD,\FA)$ and it is naturally isomorphic as Banach space to $\FA(\rdd)$ through the map $T\mapsto \K{T}$, where the relation between $T$ and its kernel $\K{T}$ is given by
	\begin{equation*}
		\dual{\nu}{T \sigma}=\dual{\nu\widetilde{\otimes}\overline{\sigma} }{\K{T}},\qquad\forall\,\sigma,\nu,\in\MD(\rd).
	\end{equation*}
	Moreover, $\FO$ is Banach algebra under composition. If $S,T\in\FO$, then
	\begin{equation}\label{Eq-ker-composition}
		\K{S\circ T}(y,u)=\intrd\K{T}(y,t)\K{S}(t,u)\,dt.
	\end{equation}
\end{theorem}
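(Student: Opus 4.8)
The plan is to treat the statement in three stages: the Banach-space isomorphism $T\mapsto\K{T}$, the duality relation characterising the kernel, and the Banach-algebra structure together with the composition formula \eqref{Eq-ker-composition}. I would base the isomorphism $\FO\cong\FA(\rdd)$ on the identification of $\FA(\rdd)$ with the projective tensor product $\FA(\rd)\hotimes\FA(\rd)$, a standard property of the Feichtinger algebra \cite{Jakobsen2018}: its elements are exactly the absolutely convergent series of rank-one kernels $\sumn\K{f_n\otimes g_n}$ with $\sumn\normFA{f_n}\normFA{g_n}<\infty$, as in the spectral decomposition recalled in the introduction. First I would check the easy inclusion: to such a kernel I associate the operator $\sumn f_n\otimes g_n$, each rank-one summand $\nu\mapsto\dual{\nu}{g_n}\,f_n$ mapping $\MD$ into $\FA$ with operator norm at most $\normFA{f_n}\normFA{g_n}$, so the series converges in $B(\MD,\FA)$. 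That the limit lies in $\FO$ then follows by testing a norm-bounded w-$\ast$ null sequence $\seqn{\nu}$ against each fixed $g_n\in\FA$ and applying dominated convergence against the $\ell^1$-summable majorant $\normFA{f_n}\normFA{g_n}$.

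For the converse I would start from Theorem~\ref{Th-OuterKernel}: restricting $T\in\FO$ to the continuous inclusion $\FA\hookrightarrow\MD$ yields a map in $B(\FA,\MD)$, hence a kernel $\K{T}\in\MD(\rdd)$ together with the pairing $\dual{\nu}{T\sigma}=\dual{\nu\widetilde{\otimes}\overline{\sigma}}{\K{T}}$. This relation I would verify first when $\K{T}$ is a rank-one kernel $\K{\psi\otimes\overline{\f}}$, where it collapses to exactly the defining identity of $\nu\widetilde{\otimes}\overline{\sigma}$, and then extend it by linearity, density and continuity. The genuine difficulty, which I expect to be the main obstacle, is to \emph{upgrade} $\K{T}$ from membership in $\MD(\rdd)$ to membership in $\FA(\rdd)$: this is where the defining hypothesis of $\FO$ — that norm-bounded w-$\ast$ convergent sequences are sent to norm-convergent ones — must be used in full, it being precisely the sequential-compactness condition that forces the kernel into the Feichtinger algebra. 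This is the content of the inner kernel theorem of \cite{FeiJak2022} (Theorem 1.3, Lemma 3.1, Corollary 3.10), and the equivalence of the $B(\MD,\FA)$ norm with the $\FA(\rdd)$ norm follows along with it; I would invoke that result here rather than reprove it.

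Finally I would establish the algebra structure. For $S,T\in\FO$ the composition $S\circ T$ makes sense because $T$ maps $\MD$ into $\FA\hookrightarrow\MD$ and $S$ then maps back into $\FA$; it again lies in $\FO$ since $T$ turns a norm-bounded w-$\ast$ convergent sequence into a norm-convergent one in $\FA$, which is in particular norm-bounded and w-$\ast$ convergent in $\MD$, so that the corresponding property of $S$ applies. Boundedness through the inclusion $\FA\hookrightarrow\MD$ yields the submultiplicative estimate that makes $\FO$ a Banach algebra in the $B(\MD,\FA)$ norm. For the kernel identity \eqref{Eq-ker-composition} I would first compute it on rank-one operators, where the composition of $f\otimes g$ and $h\otimes k$ is the scalar multiple $\dual{h}{g}\,(f\otimes k)$ of a single rank-one operator, whose kernel is obtained by pairing $\K{f\otimes g}$ and $\K{h\otimes k}$ in the shared middle variable, matching \eqref{Eq-ker-composition}. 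The general case then follows from bilinearity of composition, the absolutely summable decompositions $S=\sum_m f_m\otimes g_m$ and $T=\sum_n h_n\otimes k_n$ provided by the isomorphism above, and Fubini's theorem, which is legitimate because $\sum_m\sum_n\normFA{f_m}\normFA{g_m}\normFA{h_n}\normFA{k_n}<\infty$.
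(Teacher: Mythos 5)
Your overall architecture is consistent with the paper's treatment: the paper does not actually prove this theorem but imports it from \cite{FeiJak2022} (Theorem 1.3, Lemma 3.1, Corollary 3.10), and you defer exactly the same hard step --- upgrading $\K{T}$ from $\MD(\rdd)$ to $\FA(\rdd)$ via the w-$\ast$-to-norm sequential continuity, together with the norm equivalence --- to that reference. The extra material you supply (the direction ``kernel in $\FA(\rdd)$ implies operator in $\FO$'' via the projective tensor decomposition, and the stability of $\FO$ under composition) is sound and parallels what the paper does later in Lemma \ref{Lem-representation-FO}.

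There is, however, one concrete problem: your rank-one verification of \eqref{Eq-ker-composition} does not prove the formula as stated. Under the paper's conventions, the Outer Kernel Theorem $\dual{Tf}{g}=\dual{\K{T}}{\K{g\otimes f}}$ together with $\K{f\otimes g}(x,y)=f(x)\overline{g(y)}$ forces $(T\psi)(t)=\intrd\K{T}(t,x)\psi(x)\,dx$. With $S=f\otimes g$, $T=h\otimes k$ you correctly get $S\circ T=\la h,g\ra\,(f\otimes k)$, whose kernel is
\begin{equation*}
	\la h,g\ra\, f(y)\overline{k(u)}=\intrd\K{S}(y,t)\K{T}(t,u)\,dt,
\end{equation*}
whereas the right-hand side of \eqref{Eq-ker-composition} evaluates to $\intrd\K{T}(y,t)\K{S}(t,u)\,dt=\la f,k\ra\, h(y)\overline{g(u)}$, which is the kernel of $T\circ S$, not of $S\circ T$. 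So your computation and the displayed identity agree only after interchanging $S$ and $T$; asserting that they ``match'' is false as written. The root cause is a clash of conventions: in \cite{FeiJak2022} the kernel is paired bilinearly with $f\otimes g$, under which their composition formula takes the transposed shape copied into \eqref{Eq-ker-composition}, while this paper's sesquilinear pairing with $\K{g\otimes f}$ yields the formula your computation produces. Your mathematics is the correct version for this paper's conventions, but a complete argument must either flag the transposition in \eqref{Eq-ker-composition} as a convention mismatch (or typo) or prove the identity literally as stated --- it cannot do neither.
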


By the above theorems \ref{Th-OuterKernel} and \ref{Th-InnerKernel}, $\MO$ is the (conjugate) topological dual of $\FO$ and the duality is given by
\begin{equation}\label{Eq-duality-FO-MO}
	\dualOP{T}{S}=\dual{\K{T}}{\K{S}}.
\end{equation}

\begin{lemma}\label{Lem-representation-FO}
	Suppose $S\in\FO$. Then there exist two non-unique sequences $\seqn{f},\seqn{g}\subseteq\FA(\rd)$ such that
	\begin{equation*}
		S=\sumn f_n\otimes g_n,\qquad\sumn\normFA{f_n}\normFA{g_n}<+\infty, \qquad\K{S}=\sumn\K{f_n\otimes g_n}.
	\end{equation*}
	Moreover,
	\begin{equation*}
		\FO\hookrightarrow\TC
	\end{equation*}
	with 
	\begin{equation*}
		\tr(S)=\intrd\K{S}(x,x)\,dx.
	\end{equation*}
\end{lemma}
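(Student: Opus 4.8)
The plan is to reduce the entire statement to the kernel $\K{S}$ and exploit the tensor-product structure of the Feichtinger algebra. First I would invoke the Inner Kernel Theorem (Theorem \ref{Th-InnerKernel}): since $S\in\FO$, its kernel $\K{S}$ lies in $\FA(\rdd)$, and $S\mapsto\K{S}$ is an isomorphism of Banach spaces. The structural fact I would then use is the projective tensor factorization $\FA(\rdd)\cong\FA(\rd)\hotimes\FA(\rd)$ (available from \cite{FeiJak2022}): every $F\in\FA(\rdd)$ admits a representation $F=\sumn u_n\otimes v_n$ with $\sumn\normFA{u_n}\normFA{v_n}<+\infty$, the infimum of such sums being equivalent to $\normFA{F}$. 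Applying this to $F=\K{S}$ and using that complex conjugation is an isometric anti-linear bijection of $\FA(\rd)$, I would set $f_n\coloneqq u_n$ and $g_n\coloneqq\overline{v_n}$, so that $\K{S}(x,y)=\sumn f_n(x)\overline{g_n(y)}=\sumn\K{f_n\otimes g_n}(x,y)$ with $\sumn\normFA{f_n}\normFA{g_n}<+\infty$. Because $S\mapsto\K{S}$ is an isomorphism, the convergence of the kernel series in $\FA(\rdd)$ transfers, via the bounded inverse map, to convergence of the partial sums $\sum_{n=1}^N f_n\otimes g_n$ to $S$ in $\FO$.

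For the embedding $\FO\hookrightarrow\TC$ I would argue at the level of the rank-one summands. Each $f_n\otimes g_n$ is trace class with $\norm{f_n\otimes g_n}_{\TC}=\norm{f_n}_{L^2}\norm{g_n}_{L^2}$, and since $\FA(\rd)\hookrightarrow L^2(\rd)$ continuously, there is $C>0$ with $\norm{f_n}_{L^2}\norm{g_n}_{L^2}\le C\,\normFA{f_n}\normFA{g_n}$. Summing gives $\sumn\norm{f_n\otimes g_n}_{\TC}\le C\sumn\normFA{f_n}\normFA{g_n}<+\infty$, so the series defining $S$ converges absolutely in trace norm; hence $S\in\TC$, and $\norm{S}_{\TC}$ is controlled by $\normFA{\K{S}}$, which yields a continuous inclusion.

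Finally, for the trace formula I would combine continuity of the trace on $\TC$ with the diagonal restriction of the kernel. Since $\tr(f_n\otimes g_n)=\langle f_n,g_n\rangle=\intrd f_n(x)\overline{g_n(x)}\,dx$ and the series converges in trace norm, $\tr(S)=\sumn\intrd f_n(x)\overline{g_n(x)}\,dx$. On the other hand, the restriction-to-the-diagonal map $F(x,y)\mapsto F(x,x)$ is bounded from $\FA(\rdd)$ into $\FA(\rd)\hookrightarrow L^1(\rd)$, so $\K{S}(x,x)=\sumn f_n(x)\overline{g_n(x)}$ lies in $L^1(\rd)$; by Cauchy--Schwarz $\sumn\norm{f_n\overline{g_n}}_{L^1}\le\sumn\norm{f_n}_{L^2}\norm{g_n}_{L^2}<+\infty$, which legitimizes interchanging sum and integral and gives $\intrd\K{S}(x,x)\,dx=\sumn\intrd f_n(x)\overline{g_n(x)}\,dx=\tr(S)$. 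The main obstacle is the projective tensor factorization of $\FA(\rdd)$ used in the first step; this is the one genuinely nontrivial ingredient (morally equivalent to the kernel theorem itself and available from the cited work), whereas the trace-class bound and the trace identity are routine once the absolutely summable rank-one decomposition is secured.
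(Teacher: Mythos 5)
Your proof is correct, but it is considerably more self-contained than the paper's, which is essentially a citation: the paper quotes \cite{FeiJak2022} (Corollary 3.15 and Remark 9) for both the rank-one decomposition and the trace formula, and only proves the embedding $\FO\hookrightarrow\TC$ --- and there its argument is the same as yours, namely bounding $\norm{S}_{\TC}\leq\sumn\norm{f_n}_{L^2}\norm{g_n}_{L^2}\lesssim\sumn\normFA{f_n}\normFA{g_n}$ and then passing to the infimum over decompositions via $\FA(\rdd)=\FA(\rd)\hotimes\FA(\rd)$ to get $\norm{S}_{\TC}\lesssim\normFA{\K{S}}\asymp\normFO{S}$. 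What you add is a derivation of the parts the paper cites: the decomposition from the Inner Kernel Theorem plus the projective tensor factorization (with the correct conjugation twist $g_n=\overline{v_n}$, needed because $\K{f\otimes g}(x,y)=f(x)\overline{g(y)}$ while tensor elements come as $u_n(x)v_n(y)$), and the trace identity from absolute convergence in $\TC$ together with the diagonal restriction $\FA(\rdd)\to\FA(\rd)\hookrightarrow L^1(\rd)$. This buys independence from the external reference, reducing everything to the two genuinely deep inputs you correctly single out (kernel theorem and tensor factorization), at the cost of length; it also quietly repairs a blemish in the paper's own display, which begins with $\norm{S}_{\TC}=\abs{\tr(A)}$ --- false as written, since the trace norm is not the modulus of the trace --- whereas your triangle inequality over the rank-one summands is the correct formulation. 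One step you should make explicit: your series $\sumn f_n\otimes g_n$ converges to $S$ in the $\FO$-norm and, separately, converges absolutely in $\TC$ to some operator $T$; to conclude $S\in\TC$ you must identify the two limits. This is routine --- both modes of convergence imply $\sum_{n=1}^{N}(f_n\otimes g_n)h\to Sh$, respectively $\to Th$, in $L^2(\rd)$ for every $h\in L^2(\rd)\subseteq\MD(\rd)$, whence $T=S$ --- but it is a gap until stated.
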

\begin{proof}
	We just have to prove the continuous inclusion of Feichtinger operators into $\TC$, all the remaining statements can be found in \cite{FeiJak2022}, see in particular Corollary 3.15 and Remark 9. The claim follows from an elementary computation:
	\begin{align*}
		\norm{S}_{\TC}&=\abs{\tr(A)}\leq\intrd\sumn\abs{f_n(x)g_n(x)}\,dx=\sumn\intrd\abs{f_n(x)g_n(x)}\,dx\\
		&\leq\sumn\norm{f_n}_{L^2}\norm{g_n}_{L^2}\lesssim\sumn\normFA{f_n}\normFA{g_n}<\infty.
	\end{align*}
	Since $\FA(\rdd)=\FA(\rd)\hat{\otimes}\FA(\rd)$, see e.g. \cite[Lemma 2.1]{FeiJak2022}, we get
	\begin{equation*}
		\norm{S}_{\TC}\lesssim\normFA{\K{S}}\asymp\normFO{S},
	\end{equation*}
which gives the desired assertion.
\end{proof}
The preceding result and the observations in \cite[p. 4]{FeiJak2022} yield 
\begin{equation}
	\FO \hookrightarrow\TC\hookrightarrow\HS\hookrightarrow B(L^2(\rd))\hookrightarrow\MO.
\end{equation}
The fact that all Feichtinger operators are trace class implies the validity of Lemma \ref{Lem-QHA-new-tools}.

\subsection{$\tau$-quantization of operators}
The following remark is the key insight for the subsequent results concerning $\Opt$ and $\tW$.
\begin{remark}\label{Rem-key-observation}
	Let us consider $f,g\in L^2(\rd)$ such that $f\neq 0$, $a\in L^2(\rdd)$ and $\seqj{f}$ o.n.b. for $L^2$ with $f_1=f$. Then we compute as follows:
	\begin{align*}
		\la \Opt(a)f,g\ra &= \la \Opt(a)f,\sumj \la g,f_j\ra f_j\ra=\sumj \la \Opt(a)\left(\la f_j,g\ra f\right),f_j\ra\\
		&=\sumj\la \Opt(a)(f\otimes g) f_j, f_j\ra=\tr\left(\Opt(a)(f\otimes g)\right).
	\end{align*}
	Taking into account the weak definition of $\Opt(a)$ and \eqref{Eq-Def-tauWigner-operators} we can write
	\begin{equation}
		\la \Opt(a)f,g\ra=\la a, \tW((f\otimes g)^\ast)\ra=\tr\left(\Opt(a)(f\otimes g)\right)=\la \Opt(a),( f\otimes g)^\ast\ra_{(\TC,\BO)}.
	\end{equation}
	By computations similar to the ones above for $S\in\TC$ with the spectral decomposition $\sumk \lambda_k f_k\otimes g_k$ after extending $\{f_k\}_k$ to an orthonormal basis of $L^2(\mathbb{R}^d)$ implies
	\begin{equation}\label{Eq-Rem-key-observation}
		\la a, \tW S\ra= \tr\left(\Opt(a) S^\ast \right)=\la \Opt(a), S\ra_{(\TC,\BO)}.
	\end{equation}
\end{remark}

\begin{theorem}\label{Th-Opt-tW-TC-BO}
		For every $\tau\in[0,1]$ the following mappings are linear and continuous:
	\begin{equation*}
		\Opt\colon L^2(\rdd)\to\BO,\qquad\tW\colon\TC\to L^2(\rdd).
	\end{equation*}
	Moreover, $\Opt$ is the Banach space adjoint of $\tW$: $\Opt=\tW^\ast$.
\end{theorem}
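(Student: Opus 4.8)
The plan is to prove the boundedness of $\tW$ by hand and then read off everything about $\Opt$ from the abstract theory of Banach space adjoints, using Remark \ref{Rem-key-observation} to make the identification $\Opt=\tW^\ast$ concrete.

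\emph{Continuity of $\tW$.} Linearity is immediate from \eqref{Eq-Def-tauWigner-operators} and the linearity of $S\mapsto\K S$. For continuity I would pass through the Fourier--$\tau$--Wigner transform: by Lemma \ref{Lem-QHA-new-tools}$(iii)$ one has $\tW S=\cF_\sigma\FtW S$, and since $\cF_\sigma$ is unitary on $L^2(\rdd)$ it suffices to bound $\norm{\FtW S}_{L^2}$. Writing the singular value decomposition $S=\sum_k s_k\, f_k\otimes g_k$ of a trace class operator, with $\{f_k\},\{g_k\}$ orthonormal systems and $\sum_k s_k=\norm{S}_{\TC}$, Lemma \ref{Lem-QHA-new-tools}$(ii)$ gives $\FtW(f_k\otimes g_k)=\tV_{g_k}f_k$, while Moyal's identity (valid for $\tV$ because $\abs{\tV_g f}=\abs{V_g f}$) yields $\norm{\tV_{g_k}f_k}_{L^2}=\norm{f_k}_{L^2}\norm{g_k}_{L^2}=1$. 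Hence the series $\sum_k s_k\,\tV_{g_k}f_k$ converges absolutely in $L^2(\rdd)$ and, by continuity of $S\mapsto\tr(\tTFS(z)^\ast S)$ on $\TC$, equals $\FtW S$. Consequently $\norm{\tW S}_{L^2}=\norm{\FtW S}_{L^2}\le\sum_k s_k=\norm{S}_{\TC}$, so $\tW\colon\TC\to L^2(\rdd)$ is linear and continuous with norm at most one. (The sharper equality $\norm{\tW S}_{L^2}=\norm{S}_{\HS}$ also follows from the orthogonality relations, but the above bound is all that is needed.)

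\emph{Boundedness of the adjoint.} Once $\tW\colon\TC\to L^2(\rdd)$ is bounded and linear, its Banach space adjoint $\tW^\ast$ is automatically bounded and linear with $\norm{\tW^\ast}=\norm{\tW}$. Under the self-duality $(L^2(\rdd))'=L^2(\rdd)$ and the trace duality $\TC'=\BO$, $\la B,S\ra_{(\TC,\BO)}=\tr(BS^\ast)$, fixed in \eqref{Eq-Rem-key-observation}, the adjoint is a bounded linear map $\tW^\ast\colon L^2(\rdd)\to\BO$ determined by
\[
\la \tW^\ast a,S\ra_{(\TC,\BO)}=\la a,\tW S\ra,\qquad a\in L^2(\rdd),\ S\in\TC.
\]
It remains to check that $\tW^\ast a=\Opt(a)$, which will at once show that $\Opt$ maps $L^2(\rdd)$ boundedly into $\BO$. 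This is exactly the content of Remark \ref{Rem-key-observation}: the key identity \eqref{Eq-Rem-key-observation} reads $\la a,\tW S\ra=\la \Opt(a),S\ra_{(\TC,\BO)}$ for $a\in L^2(\rdd)$ and $S\in\TC$, so the defining relation of $\tW^\ast$ gives $\la \tW^\ast a,S\ra_{(\TC,\BO)}=\la \Opt(a),S\ra_{(\TC,\BO)}$ for every $S\in\TC$. Testing on rank-one operators $S=f\otimes g$ and using $\tr\bigl(B(f\otimes g)^\ast\bigr)=\la Bg,f\ra$ collapses this to $\la(\tW^\ast a)g,f\ra=\la\Opt(a)g,f\ra$ for all $f,g\in L^2(\rd)$, whence $\tW^\ast a=\Opt(a)$. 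Therefore $\Opt=\tW^\ast\colon L^2(\rdd)\to\BO$ is bounded, finishing the proof.

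I expect the only genuine analytic input to be the continuity of $\tW$, i.e.\ the Plancherel/Moyal step; everything else is formal. The main points requiring care are the bookkeeping forced by the sesquilinear conventions (choosing the correct self-duality of $L^2(\rdd)$ and the correct conjugation $\tr(BS^\ast)$ in the trace pairing) and the verification that \eqref{Eq-Rem-key-observation}, established in Remark \ref{Rem-key-observation} through the spectral decomposition of $S\in\TC$, is indeed at our disposal for all $a\in L^2(\rdd)$ rather than only on a dense subset.
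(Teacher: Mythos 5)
Your proposal is correct and follows essentially the same route as the paper: the continuity of $\tW\colon\TC\to L^2(\rdd)$ via $\tW=\cF_\sigma\FtW$, the spectral (singular value) decomposition and Moyal's identity — precisely the reasoning of Theorem \ref{Th-Opt-tW-FO-MD} adapted to the $(\TC,L^2)$ setting, as the paper indicates — and the adjoint identity read off directly from \eqref{Eq-Rem-key-observation}. The only cosmetic difference is that you recover the boundedness of $\Opt$ from the adjoint identification, whereas the paper takes it as evident from the weak definition and Moyal's bound; note your route still tacitly needs $\Opt(a)\in\BO$ for each fixed $a\in L^2(\rdd)$ so that the trace pairing in \eqref{Eq-Rem-key-observation} makes sense, which is exactly that ``evident'' step.
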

\begin{proof}
	The boundedness of $\Opt$ is evident; the proof of the continuity of $\tW$ follows by a similar reasoning as the proof of the subsequent Theorem \ref{Th-Opt-tW-FO-MD}. The last claim is just \eqref{Eq-Rem-key-observation}.
\end{proof}

\begin{theorem}\label{Th-Opt-tW-FO-MD}
	For every $\tau\in[0,1]$ the following mappings are linear and continuous:
	\begin{equation*}
		\Opt\colon \MD(\rdd)\to\MO,\qquad\tW\colon\FO\to \FA(\rdd).
	\end{equation*}
		Moreover, $\Opt$ is the Banach space adjoint of $\tW$: $\Opt=\tW^\ast$, i.e. for every $a\in\MD(\rdd)$ and $S\in\FO$
		\begin{equation}\label{Eq-Opt-adjoint-tW}
			\dual{a}{\tW S}=\dualOP{\Opt(a)}{S}.
		\end{equation}
\end{theorem}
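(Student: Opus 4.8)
Theorem~\ref{Th-Opt-tW-FO-MD} asserts that $\Opt\colon\MD(\rdd)\to\MO$ and $\tW\colon\FO\to\FA(\rdd)$ are linear, continuous, and mutually adjoint in the sense of \eqref{Eq-Opt-adjoint-tW}. Let me sketch the proof.

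The plan is to treat the three claims in the order: continuity of $\tW$, the adjoint identity, and then continuity of $\Opt$ as a formal consequence. First I would establish that $\tW\colon\FO\to\FA(\rdd)$ is bounded. The natural route is through the isomorphism $\FO\cong\FA(\rdd)$ of Theorem~\ref{Th-InnerKernel}, $S\mapsto\K{S}$, together with the definition \eqref{Eq-Def-tauWigner-operators} of $\tW S$ as a partial Fourier transform of $\K{S}$ after the affine change of variables $(x,t)\mapsto(x+\tau t,\,x-(1-\tau)t)$. Since $\FA$ is invariant under the Fourier transform and under invertible linear changes of coordinates (standard facts about Feichtinger's algebra, available from \cite{Jakobsen2018}), the composite map $\K{S}\mapsto\tW S$ is a bounded operator on $\FA(\rdd)$. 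Concretely, $\norm{\tW S}_{\FA}\lesssim\normFA{\K{S}}\asymp\normFO{S}$, which yields linearity and continuity of $\tW$ at once.

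Next I would prove the adjoint relation \eqref{Eq-Opt-adjoint-tW}. For $S\in\FO$ and $a\in\MD(\rdd)$ I want $\dual{a}{\tW S}=\dualOP{\Opt(a)}{S}$. The strategy is to verify this first on a dense, well-understood subclass and then extend by continuity and duality. Using Lemma~\ref{Lem-representation-FO}, write $S=\sumn f_n\otimes g_n$ with $\sumn\normFA{f_n}\normFA{g_n}<\infty$ and $f_n,g_n\in\FA$. By linearity and the convergence of this series in $\FO$ (hence $\tW S=\sumn\tW(f_n\otimes g_n)$ in $\FA(\rdd)$), it suffices to check the identity on rank-one operators $f\otimes g$ with $f,g\in\FA\subseteq\Sch$-like test functions. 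For these, Remark~\ref{Rem-key-observation} already supplies $\dual{a}{\tW(f\otimes g)}=\tr(\Opt(a)(f\otimes g)^\ast)=\dualOP{\Opt(a)}{f\otimes g}$ when $a\in L^2$, and the weak definition $\dual{\Opt(a)f}{g}=\dual{a}{\tW(g,f)}$ identifies the two pairings. Summing over $n$ and invoking the absolute convergence transfers the identity to all of $\FO$; the extension from $a\in L^2\cap\MD$ to general $a\in\MD(\rdd)$ follows because $\tW S\in\FA(\rdd)$ makes the left-hand pairing $\dual{a}{\tW S}$ a bounded conjugate-linear functional of $a\in\MD$, consistent with the $(\MD,\FA)$ duality.

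Finally, continuity of $\Opt\colon\MD(\rdd)\to\MO$ is obtained for free from the adjoint identity: since $\tW\colon\FO\to\FA(\rdd)$ is bounded and $\MO$, $\MD(\rdd)$ are the conjugate duals of $\FO$, $\FA(\rdd)$ respectively (Theorems~\ref{Th-OuterKernel} and \ref{Th-InnerKernel} together with \eqref{Eq-duality-FO-MO}), the relation \eqref{Eq-Opt-adjoint-tW} exhibits $\Opt$ as the Banach-space adjoint $\tW^\ast$, which is automatically linear and continuous with $\norm{\Opt}=\norm{\tW}$. The main obstacle I anticipate is the density and convergence bookkeeping in the second step: one must justify interchanging the infinite sum $\sumn f_n\otimes g_n$ with the duality pairings $\dual{a}{\cdot}$ and $\tr(\Opt(a)\,\cdot\,)$ for a genuinely distributional symbol $a\in\MD(\rdd)$, rather than an $L^1$ or $L^2$ symbol. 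This is where the norm-summability $\sumn\normFA{f_n}\normFA{g_n}<\infty$ from Lemma~\ref{Lem-representation-FO} does the essential work, guaranteeing convergence of $\sumn\tW(f_n\otimes g_n)$ in the $\FA(\rdd)$-norm so that the continuous functional $a$ may be passed inside the sum.
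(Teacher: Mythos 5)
Your argument for the continuity of $\tW\colon\FO\to\FA(\rdd)$ is correct and is a legitimate alternative to the paper's: you read \eqref{Eq-Def-tauWigner-operators} as a partial Fourier transform composed with an invertible linear change of variables (determinant $-1$ for every $\tau$) applied to $\K{S}$, and invoke invariance of $\FA(\rdd)$ under both; the paper instead expands $S=\sumn f_n\otimes g_n$ via Lemma \ref{Lem-representation-FO}, computes $\FtW S=\sumn \tV_{g_n}f_n$, estimates $\normFA{\FtW S}\leq\sumn\normFA{f_n}\normFA{g_n}$ and writes $\tW=\cF_\sigma\FtW$. (Your kernel-level argument is in fact the one the paper uses later, in the proof of Corollary \ref{Cor-tW-Opt-iso-FO-FA}.) Either route is fine.

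The genuine gap is in your proof of \eqref{Eq-Opt-adjoint-tW}, at the extension from $a\in L^2\cap\MD$ to general $a\in\MD(\rdd)$. A density argument needs \emph{both} sides of the identity to depend on $a$ continuously for a topology in which $L^2\cap\MD$ is dense in $\MD(\rdd)$. Norm density fails: $\MD(\rdd)$ is the nonseparable dual of the separable space $\FA(\rdd)$, so the norm closure of $L^2\cap\MD$ (or of $\FA(\rdd)$) is a proper closed subspace; hence your appeal to $a\mapsto\dual{a}{\tW S}$ being a \emph{bounded} functional buys nothing. The relevant topology is the weak-$\ast$ one, and there your argument covers only the left-hand side (pairing against the fixed element $\tW S\in\FA(\rdd)$ is weak-$\ast$ continuous by definition). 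For the right-hand side you would need weak-$\ast$ continuity of $a\mapsto\dualOP{\Opt(a)}{S}$ — but a linear functional on $\MD(\rdd)$ is weak-$\ast$ continuous precisely when it is represented by an element of $\FA(\rdd)$, which is the existence half of what you are proving; assuming it is essentially circular. (It can be established independently, via uniform boundedness of $\norm{\Opt(a_n)}_{\MO}$ along bounded weak-$\ast$ convergent sequences, convergence of $\dual{\K{\Opt(a_n)}}{\K{f\otimes g}}=\dual{a_n}{\tW(f,g)}$ on finite-rank kernels, and norm density of their span in $\FA(\rdd)$ — but none of this is in your proposal.) The clean fix, which is the paper's actual proof, avoids approximating $a$ altogether: for arbitrary $a\in\MD(\rdd)$ use the duality \eqref{Eq-duality-FO-MO} and the norm-convergent expansion $\K{S}=\sumn\K{f_n\otimes g_n}$ in $\FA(\rdd)$ to get
\begin{equation*}
	\dualOP{\Opt(a)}{S}=\dual{\K{\Opt(a)}}{\K{S}}=\sumn\dual{\K{\Opt(a)}}{\K{f_n\otimes g_n}}=\sumn\dual{\Opt(a)g_n}{f_n}=\sumn\dual{a}{\tW(f_n,g_n)}=\dual{a}{\tW S},
\end{equation*}
where the third equality is the Outer Kernel Theorem \ref{Th-OuterKernel}, the fourth is the weak definition of $\Opt(a)$ (which also yields $\norm{\Opt(a)}_{\MO}\lesssim\norm{a}_{\MD}$, i.e. the continuity of $\Opt$ you wanted as a corollary), and the last uses norm convergence of $\sumn\tW(f_n,g_n)$ to $\tW S$ in $\FA(\rdd)$. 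This works for every $a\in\MD(\rdd)$ at once and makes the $L^2$ detour through Remark \ref{Rem-key-observation} and the trace pairing unnecessary.
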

\begin{proof}
	The boundedness and linearity of $\Opt$ follow from the definitions. By using the formal representation of $\Opt(a)$ we can derive an expression for its kernel:
	\begin{equation}\label{Eq-kernel-Opt-a}
		\K{\Opt(a)}(t,x)=\intrd e^{2\pi i (t-x)\om}a((1-\tau)t+\tau x,\om)\,d\om.
	\end{equation} 
	Let us consider first $f,g\in\FA$. Then a standard argument, see e.g. \cite[Proposition 1.3.25]{CordRod2020}, gives that
	\begin{equation*}
		\tW(f\otimes g)=\tW(f,g)\in\FA(\rdd)\qquad\text{with}\qquad\normFA{\tW(f\otimes g)}\lesssim\normFA{f}\normFA{g}.
	\end{equation*}
	Since Lemma \ref{Lem-QHA-new-tools} holds for $\FO$, we write $\tW=\cF_\sigma\FtW$ and use the spectral  decomposition for $S$ of the form $\sumn f_n\otimes g_n$ as shown in Lemma \ref{Lem-representation-FO}. Now, we compute:
	\begin{align}
		\FtW S(z)&=\tr(\tTFS(z)^\ast S)=\tr(\sumn \tTFS(z)^\ast(f_n\otimes g_n))\notag\\
		&=\sumn \la\tTFS(z)^\ast f_n,g_n\ra=\sumn \tV_{g_n}f_n(z). \label{Eq-FtW-effect}
	\end{align}
	Taking a suitable window for the norm on $\FA(\rdd)$ \cite[Theorem 5.3]{Jakobsen2018} we have
	\begin{equation*}
		\normFA{\FtW S}\leq\sumn\normFA{\tV_{g_n} f_n}=\sumn\normFA{f_n}\normFA{g_n}<+\infty.
	\end{equation*}
	Consequently,
	\begin{align*}
		\normFA{\FtW S}&\leq\inf\{\sumn\normFA{f_n}\normFA{g_n}, S=\sumn f_n\otimes g_n\}\\
		&\leq\inf\{\sumn\normFA{f_n}\normFA{g_n}, \K{S}=\sumn \K{f_n\otimes g_n}\}\\
		&=\normFA{\K{S}}\asymp\normFO{S}.
	\end{align*}
	We proved the boundedness of $\FtW\colon\FO\to\FA(\rdd)$, the continuity of the symplectic Fourier transform $\cF_\sigma\colon\FA(\rdd)\to\FA(\rdd)$ is well-known, and thus the continuity of $\tW\colon\FO\to\FA(\rdd)$ follows. Concerning the last claim, we proceed as follows:
	\begin{align*}
		\dualOP{\Opt(a)}{S}&=\dual{\K{\Opt(a)}}{\K{S}}=\dual{\K{\Opt(a)}}{\sumn\K{f_\otimes g_n}}\\
		&=\sumn \dual{\K{\Opt(a)}}{\K{f_\otimes g_n}}=\sumn\dual{\Opt(a)g_n}{f_n}\\
		&=\sumn\dual{a}{\tW(f_n\otimes g_n)}=\dual{a}{\sumn\tW(f_n\otimes g_n)}\\
		&=\dual{a}{\tW S},
	\end{align*}
	which concludes the proof.
\end{proof}

On account of Theorem \ref{Th-Opt-tW-TC-BO} and \ref{Th-Opt-tW-FO-MD}, it seems reasonable to interpret $\tW S$ as the $\tau$-quantization of an operator in $\FO$ or $\TC$. 

\begin{corollary}\label{Cor-tW-Opt-iso-FO-FA}
	\begin{itemize}
		\item[$(i)$] For every $\tau\in[0,1]$ the mapping $\tW\colon\FO\to\FA(\rdd)$ is a topological isomorphism with inverse given by $\Opt\colon\FA(\rdd)\to\FO$;
		\item[$(ii)$] A linear and continuous operator $S\colon \FA(\rd)\to\MD(\rd)$ belongs to $\FO$ if and only if $\tW S\in \FA(\rdd)$ for some (and hence any) $\tau\in[0,1]$.
	\end{itemize}
\end{corollary}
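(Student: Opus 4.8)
The plan is to derive Corollary~\ref{Cor-tW-Opt-iso-FO-FA} almost entirely from Theorem~\ref{Th-Opt-tW-FO-MD}, treating it as a packaging of facts already in hand. For part $(i)$, Theorem~\ref{Th-Opt-tW-FO-MD} already gives that $\tW\colon\FO\to\FA(\rdd)$ is linear and continuous; what remains is to exhibit a two-sided continuous inverse. My candidate is $\Opt\colon\FA(\rdd)\to\FO$. First I would check that $\Opt$ does map into $\FO$: since $\FA(\rdd)\hookrightarrow\MD(\rdd)$, the mapping $\Opt\colon\MD(\rdd)\to\MO$ of Theorem~\ref{Th-Opt-tW-FO-MD} restricts to $\FA(\rdd)$, and I must verify the image actually lands in $\FO\subset\MO$ with control of the $\FO$-norm. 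The cleanest route is to invoke Remark~\eqref{Eq-Rem-tsym}, which states $\tsym{S}=\tW S$ for $S\in\TC$, i.e. $\Opt(\tW S)=S$; combined with the bound $\normFA{\tW S}\asymp\normFO{S}$ established inside the proof of Theorem~\ref{Th-Opt-tW-FO-MD}, this simultaneously shows $\Opt\circ\tW=\mathrm{Id}_{\FO}$ and that $\Opt$ maps $\FA(\rdd)$ boundedly into $\FO$.

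Next I would establish the reverse composition $\tW\circ\Opt=\mathrm{Id}_{\FA(\rdd)}$. For $a\in\FA(\rdd)\subset L^2(\rdd)$, Theorem~\ref{Th-Opt-tW-TC-BO} makes $\Opt(a)$ a bounded operator and $\tW$ its inverse transform at the $L^2$ level; on the Feichtinger scale the identity $a=\cF_\sigma\FtW\Opt(a)=\tW\Opt(a)$ from the Remark preceding \eqref{Eq-Rem-tsym} closes the loop. Since both $\tW$ and $\Opt$ are bounded and mutually inverse, the open mapping theorem (or simply the explicit norm equivalence $\normFA{\tW S}\asymp\normFO{S}$) upgrades the bijection to a topological isomorphism, which finishes $(i)$.

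For part $(ii)$, the forward direction is immediate: if $S\in\FO$ then $\tW S\in\FA(\rdd)$ for every $\tau$ by Theorem~\ref{Th-Opt-tW-FO-MD}, so in particular for some $\tau$. The substantive direction is the converse: suppose $S\colon\FA(\rd)\to\MD(\rd)$ is linear and continuous with $\tW S\in\FA(\rdd)$ for some fixed $\tau$. Here I first need to know that $\tW S$ is even defined for such an $S$; since $S\in\MO=B(\FA,\MD)$ by Theorem~\ref{Th-OuterKernel} and $\tW S=\cF_\sigma\FtW S$ extends to $\MO$ via the kernel $\K{S}\in\MD(\rdd)$, the hypothesis $\tW S\in\FA(\rdd)$ is meaningful. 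Then I would form $T\coloneqq\Opt(\tW S)$, which lies in $\FO$ by part $(i)$, and argue $T=S$ by showing they have the same kernel: applying $\Opt$ to $\tW S\in\FA(\rdd)$ recovers the operator whose $\tau$-Wigner distribution is $\tW S$, and since $\tW$ is injective on $\MO$ (being $\cF_\sigma$ composed with the injective map $S\mapsto\K{S}$ of Theorem~\ref{Th-OuterKernel}), the equality $\tW T=\tW S$ forces $T=S$, whence $S\in\FO$. The phrase ``for some (and hence any)'' then follows because membership in $\FO$ is a $\tau$-independent property, so once $S\in\FO$ is established, Theorem~\ref{Th-Opt-tW-FO-MD} returns $\tW S\in\FA(\rdd)$ for every $\tau$.

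The main obstacle I anticipate is the converse of $(ii)$: justifying that $\tW S$ is well-defined for a general $S\in\MO$ and that the identity $\Opt(\tW S)=S$, proved in the Remark only under the trace-class assumption \eqref{Eq-Rem-tsym}, still holds once we merely assume $\tW S\in\FA(\rdd)$ rather than $S\in\TC$. The resolution is to run the argument at the level of kernels and symbols in the duality $(\MO,\FO)$: the injectivity of $S\mapsto\K{S}$ on $\MO$ (Theorem~\ref{Th-OuterKernel}) together with the already-verified bijectivity of $\tW$ on $\FO$ from part $(i)$ lets me identify $S$ with the genuine Feichtinger operator $\Opt(\tW S)$ without ever needing $S\in\TC$ a priori.
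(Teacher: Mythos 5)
Your treatment of part (i)'s injectivity direction ($\Opt\circ\tW=\mathrm{Id}_{\FO}$ via \eqref{Eq-Rem-tsym}) and your reduction of part (ii) to part (i) both track the paper, but there is a genuine gap at the heart of part (i): you never actually prove that $\Opt$ maps $\FA(\rdd)$ into $\FO$, which is exactly the surjectivity of $\tW\colon\FO\to\FA(\rdd)$ and is the substantive content of the corollary. Your ``cleanest route'' is circular: from $\Opt(\tW S)=S$ for $S\in\FO$ you can only conclude that $\Opt$ maps the range $\tW(\FO)\subseteq\FA(\rdd)$ back into $\FO$; concluding that $\Opt$ maps \emph{all} of $\FA(\rdd)$ into $\FO$ presupposes $\tW(\FO)=\FA(\rdd)$, i.e.\ the surjectivity you are trying to prove. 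Your fallback --- the identity $a=\cF_\sigma\FtW\Opt(a)$ from the remark, valid for $a\in L^1(\rdd)$ --- does not close the gap either: for $a\in\FA(\rdd)$, Theorem \ref{Th-Opt-tW-TC-BO} only places $\Opt(a)$ in $\BO$ (it is not yet known to be trace class, so $\FtW\Opt(a)$ is only the formal spreading function), and the identity $\tW\Opt(a)=a$, however interpreted, says nothing about whether the operator $\Opt(a)$ has kernel in $\FA(\rdd)$, i.e.\ whether it lies in $\FO$ rather than merely in $\MO$. Since your part (ii) forms $T=\Opt(\tW S)$ ``which lies in $\FO$ by part (i)'', the gap propagates there as well.

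The paper fills precisely this hole with a kernel computation that your proposal omits: from \eqref{Eq-kernel-Opt-a} one has $\K{\Opt(a)}=\Psi_\tau\cF^{-1}_2 a$, where $\cF^{-1}_2$ is the inverse partial Fourier transform in the second variable and $\Psi_\tau$ an invertible linear change of variables; both operations preserve $\FA(\rdd)$, so $a\in\FA(\rdd)$ forces $\K{\Opt(a)}\in\FA(\rdd)$, and the Inner Kernel Theorem \ref{Th-InnerKernel} then yields $\Opt(a)\in\FO$ together with the continuity of $\Opt\colon\FA(\rdd)\to\FO$. (The same observation, run in reverse, is also the clean route to part (ii): $\tW S$ and $\K{S}$ are related by invertible operations preserving both $\FA(\rdd)$ and $\MD(\rdd)$, so $\tW S\in\FA(\rdd)$ iff $\K{S}\in\FA(\rdd)$ iff $S\in\FO$.) A smaller inaccuracy: the norm equivalence $\normFA{\tW S}\asymp\normFO{S}$ you cite was not established in the proof of Theorem \ref{Th-Opt-tW-FO-MD}; only the upper bound $\normFA{\tW S}\lesssim\normFO{S}$ is proved there, and the equivalence becomes available only after the continuity of $\Opt$ on kernels (or an appeal to the open mapping theorem) is in place.
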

\begin{proof}
	$(i)$ We observed in \eqref{Eq-Rem-tsym} that $\tW S$ is just the $\tau$-symbol $\tsym{S}$ of a trace class operator $S$, in particular this holds for $S\in\FO$. Therefore,  
	\begin{equation*}
		\Opt\circ\tW S=\Opt(\tsym{S})=S.
	\end{equation*}
	We now show that if we start with $a\in\FA(\rdd)$, then $\Opt(a)$ belongs to $\FO$. From \eqref{Eq-kernel-Opt-a}, we have that the kernel of $\Opt(a)$ can be written as
	\begin{equation*}
		\K{\Opt(a)}(t,x)=\intrd e^{2\pi i (t-x)\om}a((1-\tau)t+\tau x,\om)\,d\om=\Psi_\tau \cF^{-1}_2 a(t,x),
	\end{equation*}
	where $\cF^{-1}_2$ is the inverse of the partial Fourier transform with respect to the second variable; $\Psi_\tau$ is the change of variables induced by the matrix  
	\begin{equation}
		\begin{bmatrix}
			1-\tau&\tau\\
			1&-1
		\end{bmatrix},\qquad \Psi_\tau F(t,x)\coloneqq F((1-\tau)t+\tau x, t-x).
	\end{equation}
 	From the assumption $a$ in the Feichtinger algebra $\FA(\rdd)$ we have $\cF^{-1}_2a\in\FA(\rdd)$, thus $\Psi_\tau\cF^{-1}_2 a$ is in $\FA(\rdd)$, i.e. $\Opt(a)$ is an element of $\FO$. The fact that $\Opt$ is continuous from $\FA(\rdd)$ into $\FO$ is evident from the applications of $\cF^{-1}_2$ and $\Psi_\tau$. Hence we have shown that 
	\begin{equation*}
		\tW\circ\Opt(a)=\tsym{\Opt(a)}=a.
	\end{equation*}
	$(ii)$ The claim is a straightforward consequence of $(i)$.
\end{proof}

\begin{corollary}\label{Cor-FtW-tSR-iso-FO-FA}
	\begin{itemize}
		\item[$(i)$] 	For every $\tau\in[0,1]$ $\FtW\colon \FO\to \FA(\rdd)$ is a topological isomorphisms with inverse given by the $\tau$-spreading representation
		\begin{equation}\label{Eq-Def-tSR-FA}
			\tSR\colon\FA(\rdd)\to\FO\,,a\mapsto\intrdd a(z)\tTFS(z)\,dz;
		\end{equation}
		\item[$(ii)$]Let us define 
		\begin{equation}\label{Eq-Def-tSR-MD}
			\tSR\colon\MD(\rdd)\to\MO\,a\mapsto\intrdd a(z)\tTFS(z)\,dz,
		\end{equation}
		where the integral has to be understood weakly as follows:
		\begin{equation*}
			\dual{\tSR (a)f}{g}\coloneqq\dual{a}{\tV_{f}g},\qquad a\in\MD(\rdd),\,f,g\in\FA(\rd).
		\end{equation*}
		Then $\tSR$ as in \eqref{Eq-Def-tSR-MD} is well-defined, linear, continuous, extends  \eqref{Eq-Def-tSR-FA} and it is the Banach space adjoint of $\FtW$ in $(i)$:
		\begin{equation}
			\tSR=\FtW^\ast,
		\end{equation}
		in the sense that for every $a\in\MD(\rdd)$ and $S\in\FO$
		\begin{equation*}
			\dual{a}{\FtW S}=\dualOP{\tSR (a)}{S}=\dual{\K{\tSR(a)}}{\K{S}};
		\end{equation*}
		\item[$(iii)$] Every function $F\in\FA(\rdd)$ admits an expansion of the following type:
		\begin{equation*}
			F=\sumn \tV_{g_n}f_n,
		\end{equation*} 
		for some sequences $\seqn{f},\seqn{g}\subseteq\FA(\rd)$ such that $\sumn\normFA{f_n}\normFA{g_n}<\infty$.
	\end{itemize}
\end{corollary}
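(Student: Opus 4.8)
The plan is to derive all three parts from the already-established isomorphism $\tW\colon\FO\to\FA(\rdd)$ of Corollary \ref{Cor-tW-Opt-iso-FO-FA} together with the identities collected in Lemma \ref{Lem-QHA-new-tools}, so that essentially no new hard analysis is needed.

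For $(i)$ I would start from the relation $\tW S=\cF_\sigma\FtW S$ of Lemma \ref{Lem-QHA-new-tools}$(iii)$. Since the symplectic Fourier transform is an involution and a topological isomorphism of $\FA(\rdd)$, this rewrites as $\FtW=\cF_\sigma\tW$ on $\FO$, displaying $\FtW$ as a composition of two topological isomorphisms (the first by Corollary \ref{Cor-tW-Opt-iso-FO-FA}$(i)$), hence itself a topological isomorphism $\FO\to\FA(\rdd)$. To identify the inverse with $\tSR$, I would invoke Lemma \ref{Lem-QHA-new-tools}$(vii)$, which states precisely $S=\intrdd\FtW S(z)\tTFS(z)\,dz=\tSR(\FtW S)$, i.e. $\tSR\circ\FtW=\Id_{\FO}$; since $\FtW$ is bijective this forces $\tSR=\FtW^{-1}$. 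Equivalently, $\FtW^{-1}=\tW^{-1}\cF_\sigma^{-1}=\Opt\circ\cF_\sigma$, and the integral formula $\Opt(a)=\intrdd\cF_\sigma a(z)\tTFS(z)\,dz$ gives $\Opt(\cF_\sigma a)=\tSR(a)$; this second route has the advantage of also showing that $\tSR\colon\FA(\rdd)\to\FO$ is well-defined and continuous, as $\cF_\sigma$ and $\Opt$ (Corollary \ref{Cor-tW-Opt-iso-FO-FA}$(i)$) both are.

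For $(ii)$, the well-definedness, linearity and continuity of $\tSR\colon\MD(\rdd)\to\MO$ I would read off from the estimate $\normFA{\tV_f g}\lesssim\normFA{f}\normFA{g}$ for $f,g\in\FA(\rd)$, available from the proof of Theorem \ref{Th-Opt-tW-FO-MD} since $\FtW(g\otimes f)=\tV_f g$: this makes $(f,g)\mapsto\dual{a}{\tV_f g}$ a bounded sesquilinear form, so $\tSR(a)\in B(\FA,\MD)=\MO$ with $\norm{\tSR(a)}_{\MO}\lesssim\norm{a}_{\MD(\rdd)}$. That the weak definition extends the integral one follows, for $a\in\FA(\rdd)$, from $\dual{\tSR(a)f}{g}=\intrdd a(z)\la\tTFS(z)f,g\ra\,dz=\dual{a}{\tV_f g}$. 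For the adjoint identity I would use the representation $S=\sumn f_n\otimes g_n$ of Lemma \ref{Lem-representation-FO}: on one side \eqref{Eq-FtW-effect} gives $\FtW S=\sumn\tV_{g_n}f_n$, whence $\dual{a}{\FtW S}=\sumn\dual{a}{\tV_{g_n}f_n}$; on the other side, using \eqref{Eq-duality-FO-MO} and the Outer Kernel Theorem to evaluate $\dualOP{\tSR(a)}{f_n\otimes g_n}=\dual{\tSR(a)g_n}{f_n}=\dual{a}{\tV_{g_n}f_n}$, the two expansions coincide. Part $(iii)$ is then immediate from $(i)$: given $F\in\FA(\rdd)$, surjectivity of $\FtW$ yields $S\in\FO$ with $\FtW S=F$, and expanding $S=\sumn f_n\otimes g_n$ with $\sumn\normFA{f_n}\normFA{g_n}<\infty$ and applying \eqref{Eq-FtW-effect} gives $F=\sumn\tV_{g_n}f_n$.

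The only genuinely delicate point is the term-by-term evaluation of the $(\MO,\FO)$-pairing in $(ii)$, where I must justify interchanging the infinite sum with the duality bracket. This is licensed by the norm convergence $\sumn\tV_{g_n}f_n\to\FtW S$ in $\FA(\rdd)$ already proved in Theorem \ref{Th-Opt-tW-FO-MD} (the series converges absolutely since $\sumn\normFA{f_n}\normFA{g_n}<\infty$), so continuity of the pairing suffices and no further argument is needed.
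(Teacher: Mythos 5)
Your proposal is correct and follows the same skeleton as the paper's proof: both establish that $\FtW=\cF_\sigma\tW$ is a topological isomorphism by composing Corollary \ref{Cor-tW-Opt-iso-FO-FA} with the involution $\cF_\sigma$, and both prove $(ii)$ and $(iii)$ via the decomposition $S=\sumn f_n\otimes g_n$ and the identity \eqref{Eq-FtW-effect}. The only real divergence is in part $(i)$, in how the inverse is identified: the paper computes the kernel $\K{\tSR(a)}(y,u)=\cF^{-1}_2[a(y-u,\cdot)](y)$ to see that $\tSR$ maps $\FA(\rdd)$ continuously into $\FO$, then verifies $\tSR\circ\FtW=\Id_{\FO}$ by an explicit computation resting on Moyal's identity, and cites Lemma \ref{Lem-QHA-new-tools}$(vii)$ only for the composition $\FtW\circ\tSR$. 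You instead read $\tSR\circ\FtW=\Id_{\FO}$ directly off Lemma \ref{Lem-QHA-new-tools}$(vii)$ --- which is indeed exactly what that lemma asserts, and it applies since it holds on all of $\TC\supseteq\FO$ --- and let bijectivity of $\FtW$ do the rest, obtaining well-definedness and continuity of $\tSR$ on $\FA(\rdd)$ from the factorization $\tSR=\Opt\circ\cF_\sigma$ together with the formula $\Opt(a)=\intrdd\cF_\sigma a(z)\tTFS(z)\,dz$. This buys a shorter argument that avoids redoing the Moyal computation, at the price of leaning on a lemma whose proof the paper leaves to the reader; the paper's route is more self-contained and yields the explicit kernel of $\tSR(a)$ as a by-product. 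A small point in your favour: you justify the term-by-term evaluation of the $(\MO,\FO)$-pairing in $(ii)$ (norm convergence of $\sumn\tV_{g_n}f_n$ in $\FA(\rdd)$, continuity of the pairing), and you substantiate the boundedness $\norm{\tSR(a)}_{\MO}\lesssim\norm{a}_{\MD}$ via the sesquilinear-form estimate, two steps the paper dismisses as standard.
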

\begin{proof}
	$(i)$ First we notice that if we start with $a\in\FA(\rdd)$, then $\tSR (a)$ is the Feichtinger operator with kernel 
	\begin{equation*}
		\K{\tSR (a)}(y,u)=\intrd a(y-u,\om)e^{2\pi i y\om}\,d\om=\cF^{-1}_2[a(y-u,\cdot)](y).
	\end{equation*}
	Clearly $\tSR$ is continuous from $\FA(\rdd)$ into $\FO$.\\
	Since we have $\tW=\cF_{\sigma}\FtW$ and $\cF_{\sigma}$ is an automorphism of $\FA(\rdd)$, we can write $\FtW=\cF_{\sigma}\tW$ and which is an isomorphism due to Corollary \ref{Cor-tW-Opt-iso-FO-FA}.
	To prove that $\tSR$ is the inverse of $\FtW$ we use \eqref{Eq-FtW-effect}, take $S=\sumn f_n\otimes g_n\in\FO$ and $\psi,\f\in\FA(\rd)$:
	\begin{align*}
		\dual{(\tSR\circ\FtW S)\psi}{\f}&=\intrdd\FtW S(z)\dual{\tTFS(z)\psi}{\f}\,dz\\
		&=\sumn\intrdd \tV_{g_n}f_n(z)\overline{\tV_{\psi}\f(z)}\,dz\\
		&=\sumn\dual{f_n}{\f}\overline{\dual{g_n}{\psi}}\\
		&=\dual{\sumn\dual{\psi}{g_n}f_n}{\f}\\
		&=\dual{\sumn(f_n\otimes g_n)\psi}{\f}\\
		&=\dual{S\psi}{\f},
	\end{align*}
	in the third equality we used Moyal's identity. For the composition $\FtW\circ\tSR$, notice that this is the identity on $\FA(\rdd)$ due lo Lemma \ref{Lem-QHA-new-tools} $(vii)$.\\
	$(ii)$ Well-posedness, linearity and continuity of $\tSR$ from $\MD(\rdd)$ into $\MO$ are standard. Trivially \eqref{Eq-Def-tSR-MD} extends \eqref{Eq-Def-tSR-FA}. To see that $\tSR$ is the Banach space adjoint of $\FtW$ from $\FO$ into $\FA(\rdd)$, take $a\in\MD(\rdd)$ and $S\in\FO$. In the following calculations we use: the prior stated  \eqref{Eq-FtW-effect}, the representation for Feichtinger operators and their kernel given in Lemma \ref{Lem-representation-FO}, the Outer and Inner Kernel Theorems:
	\begin{align*}
		\dual{a}{\FtW S}&=\sumn\dual{a}{\tV_{g_n}f_n}=\sumn\dual{\tSR(a)g_n}{f_n}\\
		&=\sumn\dual{\K{\tSR(a)}}{\K{f_n\otimes g_n}}=\dual{\K{\tSR (a)}}{\K{S}}\\
		&=\dualOP{\tSR(a)}{S}.
	\end{align*}
	$(iii)$ The last  claim is a direct consequence of the computations in \eqref{Eq-FtW-effect} and the surjectivity of $\FtW$. 
\end{proof}

\subsection{A convenient environment for QHA} 

In Section \ref{Sec-Preliminaries} we introduced convolutions between a function and an operator and two operators. Keyl, Kiukas and Werner \cite{KeyKiuWer2016} showed that such convolutions make sense for wider classes of (generalized) functions and operators. We summarize here the main results; in what follows $\mathfrak{S}$ denotes the set of pseudo-differential operators with Weyl symbol in the Schwartz class $\Sch(\rdd)$ and $\mathfrak{S}'$ those pseudo-differential operators with Weyl symbol in $\Tem(\rdd)$. On account of the Schwartz Kernel Theorem we can identify $\mathfrak{S}'$ with the continuous and linear operators from $\Sch(\rd)$ into $\Tem(\rd)$.

\begin{proposition}
	\begin{itemize}
		\item[$(i)$] Suppose $S,T\in\mathfrak{S}$, $A\in\mathfrak{S}'$, $b\in\Sch(\rdd)$ and $a\in\Tem(\rdd)$. Then the following convolutions are well-defined and they extend the ones defined in Subsection \ref{Subsec-tools-QHA}:
		\begin{equation*}
			S\star T\in\Sch(\rdd), \quad S\star A\in\Tem(\rdd), \quad b\star S\in\mathfrak{S},\quad a\star S,b\star A\in\mathfrak{S}';
		\end{equation*}
		\item[$(ii)$]The Fourier-Wigner transform can be extended to a topological isomorphism $\FohW\colon\mathfrak{S}'\to\Tem(\rdd)$;
		\item[$(iii)$] We have $\cF_\sigma(S\star T)=\FohW S\cdot\FohW T$ and $\FohW(b\star S)=\cF_\sigma b\cdot \FohW S$ whenever $S,T$ and $b$ are such that the convolutions are defined as in part $(i)$;
		\item[$(iv)$] The Weyl symbol of $A\in\mathfrak{S}'$ is given by $\cF_\sigma\FohW A$.
	\end{itemize}
\end{proposition}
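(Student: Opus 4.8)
The plan is to transport every assertion through the Fourier--Wigner transform $\FohW$, under which the operator convolutions become pointwise products by the quantum harmonic analysis identities of Lemma~\ref{Lem-QHA-new-tools} specialized to $\tau=1/2$. The backbone is the pair of isomorphism chains
\[
\mathfrak{S}\xrightarrow{\ \Opw^{-1}\ }\Sch(\rdd)\xrightarrow{\ \cF_\sigma\ }\Sch(\rdd),\qquad \mathfrak{S}'\xrightarrow{\ \Opw^{-1}\ }\Tem(\rdd)\xrightarrow{\ \cF_\sigma\ }\Tem(\rdd),
\]
in which the Weyl symbol map $\Opw^{-1}$ is a topological isomorphism by the very definition of $\mathfrak{S}$ and $\mathfrak{S}'$, and the symplectic Fourier transform $\cF_\sigma$ is an involutive automorphism of both $\Sch(\rdd)$ and $\Tem(\rdd)$.

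First I would settle $(iv)$ and $(ii)$ simultaneously. The $\tau=1/2$ instance of \eqref{Eq-Rem-tsym} asserts that the Weyl symbol of a trace--class operator $S$ equals $\cF_\sigma\FohW S$; this is precisely $(iv)$ on $\TC$, hence on $\mathfrak{S}\subset\TC$, and together with $\cF_\sigma^2=\mathrm{Id}$ it identifies $\FohW=\cF_\sigma\circ\Opw^{-1}$ as a topological isomorphism $\mathfrak{S}\to\Sch(\rdd)$. Since $\mathfrak{S}\cong\Sch(\rdd)$ is a nuclear Fréchet space, it is reflexive, and I would extend $\FohW$ to $\mathfrak{S}'$ by transposition to obtain an isomorphism $\FohW\colon\mathfrak{S}'\to\Tem(\rdd)$, thereby proving $(ii)$; reading $(iv)$ off the extended chain then gives the Weyl symbol of any $A\in\mathfrak{S}'$ as $\cF_\sigma\FohW A$. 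The main obstacle lies here: one must check that the transpose so obtained coincides with the Fourier--Wigner transform defined weakly by pairing $A$ against the time--frequency shifts $\ohTFS(z)$, i.e. reconcile the duality between $\mathfrak{S}$ and $\mathfrak{S}'$ with that between $\Sch$ and $\Tem$ so that the extension is genuinely an extension of the trace formula on $\TC$.

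With $(ii)$ available, parts $(i)$ and $(iii)$ follow by reading the convolution identities on the transform side. For $S,T\in\mathfrak{S}$, Lemma~\ref{Lem-QHA-new-tools}$(v)$ at $\tau=1/2$ gives $\cF_\sigma(S\star T)=\FohW S\cdot\FohW T$; the factors lie in $\Sch(\rdd)$, so does their product, and $\cF_\sigma$ preserves $\Sch(\rdd)$, whence $S\star T\in\Sch(\rdd)$ --- this yields the first claim of $(i)$ and the first identity of $(iii)$ at once. Similarly Lemma~\ref{Lem-QHA-new-tools}$(vi)$ at $\tau=1/2$ gives $\FohW(b\star S)=\cF_\sigma b\cdot\FohW S$; for $b\in\Sch(\rdd)$ and $S\in\mathfrak{S}$ the right--hand side is Schwartz, so $b\star S\in\mathfrak{S}$ by $(ii)$, giving the third claim of $(i)$ and the second identity of $(iii)$. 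For the three remaining convolutions I would take these product formulas as \emph{definitions}: $S\star A:=\cF_\sigma(\FohW S\cdot\FohW A)$ for $S\in\mathfrak{S}$, $A\in\mathfrak{S}'$, and $\FohW(a\star S):=\cF_\sigma a\cdot\FohW S$, $\FohW(b\star A):=\cF_\sigma b\cdot\FohW A$ for $a\in\Tem(\rdd)$, $b\in\Sch(\rdd)$. In each case one factor is Schwartz and the other tempered, so the product is a well--defined element of $\Tem(\rdd)$; applying $\cF_\sigma$, respectively the inverse of the isomorphism in $(ii)$, places $S\star A$ in $\Tem(\rdd)$ and $a\star S$, $b\star A$ in $\mathfrak{S}'$.

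It remains to verify that these operations extend those of Subsection~\ref{Subsec-tools-QHA} and are continuous. Since $\mathfrak{S}\subset\TC$ and $\Sch(\rdd)\subset L^1(\rdd)$, on every overlap where the direct integral definitions $S\star T(z)=\tr(S\alpha_z(\check{T}))$ and $b\star S=\intrdd b(z)\alpha_z(S)\,dz$ already apply, the identities of Lemma~\ref{Lem-QHA-new-tools} hold verbatim and force the new and old definitions to agree. Finally, the continuity implicit in $(i)$ and the identities of $(iii)$ descend from the continuity of the multiplications $\Sch\times\Sch\to\Sch$ and $\Sch\times\Tem\to\Tem$, combined with the continuity of $\cF_\sigma$ and of the isomorphism $(ii)$.
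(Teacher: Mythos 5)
Your proposal cannot be checked against an argument in the paper, because the paper gives none: this proposition is presented as a summary of results from Keyl, Kiukas and Werner, and the authors simply cite \cite{KeyKiuWer2016}. So the only meaningful comparison is between your route and theirs. Your strategy --- transport every claim through $\FohW=\cF_\sigma\circ\Opw^{-1}$ and read all convolutions as pointwise products via Lemma \ref{Lem-QHA-new-tools} $(v)$--$(vii)$ at $\tau=1/2$ --- is sound, and it is in fact more economical than the treatment in \cite{KeyKiuWer2016}, where the convolutions involving $\mathfrak{S}'$ are defined by duality and the product formulas are then proved as theorems; you reverse the logic, taking the product formulas as definitions and verifying agreement on overlaps, which is legitimate since the statement only asks for well-defined extensions of the operations of Subsection \ref{Subsec-tools-QHA}.

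Two points of criticism. First, the detour through nuclearity, reflexivity and transposition in your proof of $(ii)$ is unnecessary, and it is exactly what creates the obstacle you flag but do not resolve. In this paper $\mathfrak{S}'$ is \emph{defined} as the class of operators with Weyl symbol in $\Tem(\rdd)$, so you may simply set $\FohW A\coloneqq\cF_\sigma\left(\mathrm{a}_{1/2}^{A}\right)$; this is a topological isomorphism because $\Opw$ and $\cF_\sigma$ are, and it renders $(iv)$ a tautology. Compatibility with the trace-class definition $\FohW A(z)=\tr\left(\ohTFS(z)^\ast A\right)$ on $\TC$ is then precisely \eqref{Eq-Rem-tsym} at $\tau=1/2$, namely $\mathrm{a}_{1/2}^{A}=\cF_\sigma\FohW A$, combined with $\cF_\sigma^2=\mathrm{Id}$; this one line closes the step you left open, and the same identity is what makes all of your overlap checks go through. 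Second, you use without comment that $\mathfrak{S}\subset\TC$ and that $S\star T$ is a genuine (tempered) function for trace-class $S,T$; both are needed before Lemma \ref{Lem-QHA-new-tools} can be invoked, since its hypotheses are exactly $\TC$ and $L^1$. These facts are standard (an operator with Schwartz Weyl symbol has Schwartz kernel, hence is trace class, and $S\star T\in L^1(\rdd)$ for $S,T\in\TC$ with $\norm{S\star T}_{L^1}\leq\norm{S}_{\TC}\norm{T}_{\TC}$), but they should be stated explicitly for the argument to be self-contained.
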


The authors of \cite{KeyKiuWer2016}  proved that the class of so-called Schwartz operators $\mathfrak{S}$ has the structure of a Fr\'echet space. We propose that the Banach space of Feichtinger operators $\FO$ is an alternative to $\mathfrak{S}$ that is a much bigger class of ``nice" operators. We start with  some preliminaries on $\FA$ and $\FO$.
\begin{lemma}\label{Lem-S0-sequentially-w-ast-dense}
	Given $f\in \MD(\rd)$, there exists a sequence $\netn{f}\subseteq \FA(\rd)$ which w-$\ast$ converges to $f$ and it is bounded by $\norm{f}_{\MD}$, i.e.
	\begin{equation*}
		\limnetn\la f_n,g	\ra = \dual{f}{g}\qquad\forall\,g\in \FA(\rd), \qquad\sup_{n}\norm{f_n}_{\FA}\leq\norm{f}_{\MD}.
	\end{equation*}
\end{lemma}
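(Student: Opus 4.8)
The plan is to produce $(f_n)$ by regularising $f$ with a bounded approximate identity of $\FA$, realised by product--convolution operators. Fix kernels with the right localisation: let $\psi_n(t)\coloneqq\psi(t/n)$ where $\psi=\rho\ast\widetilde{\rho}$ is the autocorrelation of some $\rho\in\cD(\rd)$ with $\norm{\rho}_{L^2}=1$ (so $\psi\in\cD(\rd)$ is positive-definite, compactly supported, $\psi(0)=1$), and let $\varphi_n$ be a Fej\'er-type kernel, i.e. nonnegative, $L^1$-normalised, with $\widehat{\varphi_n}$ compactly supported and $\widehat{\varphi_n}\to 1$. Define $E_n u\coloneqq\psi_n\cdot(\varphi_n\ast u)$ and set $f_n\coloneqq E_n f$. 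I will then verify the three properties that yield the claim: (a) $f_n\in\FA$; (b) $\dual{f_n}{g}\to\dual{f}{g}$ for every $g\in\FA$, i.e. $f_n\to f$ in the weak-$\ast$ topology of $\MD$; and (c) the uniform norm control. Since $\FA$ is separable, the weak-$\ast$ topology is metrizable on norm-bounded subsets of $\MD$, so exhibiting a sequence (rather than a net) indeed suffices.

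For (a): the convolution $\varphi_n\ast f$ is band-limited, with frequency support inside $\supp\widehat{\varphi_n}$, and smooth; multiplying it by the compactly supported $\psi_n\in\cD(\rd)$ yields a compactly supported smooth function, hence an element of $\cD(\rd)\subseteq\FA(\rd)$. For (b) I would pass to the adjoint: for $g\in\FA$ one has $\dual{f_n}{g}=\dual{E_n f}{g}=\dual{f}{E_n^{\ast}g}$, and $E_n^{\ast}g=\varphi_n\ast(\psi_n\cdot g)\to g$ in the norm of $\FA$ because $(\psi_n)$ and $(\varphi_n)$ act as an approximate identity for multiplication and for convolution on $\FA$, which is the defining feature of the bounded approximate identity (see \cite{Jakobsen2018,Feichtinger-On-a-new-segal-alg-1981}). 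The continuity of the functional $f\in\MD$ against $\FA$-norm convergent sequences then gives $\dual{f}{E_n^{\ast}g}\to\dual{f}{g}$.

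The decisive point is (c), and it is where I expect the real obstacle. The uniform bound that holds --- and the one actually needed, since the extension of Feichtinger operators from $\FA$ to $\MD$ in the definition \eqref{Eq-Def-FO} requires weak-$\ast$ sequences that are bounded in the $\MD$-norm --- is $\sup_n\norm{f_n}_{\MD}\le\norm{f}_{\MD}$, matching the notion of norm-boundedness for sequences in $\MD$ used throughout. I would derive it from contractivity of each $E_n$ on $\MD$: since $\widehat{\psi_n}\ge 0$ with $\norm{\psi_n}_{\cF L^1}=\int\widehat{\psi_n}=\psi_n(0)=1$, multiplication by $\psi_n$ is an $\MD$-contraction, and since $\varphi_n\ge 0$ with $\int\varphi_n=1$, convolution by $\varphi_n$ averages and is likewise contractive on $\MD$; composing gives $\norm{E_n}_{\MD\to\MD}\le 1$ and hence the bound. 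The main difficulty is precisely pinning down the sharp constant $1$ rather than a harmless multiplicative constant: it is what forces the positive-definite and probability normalisations of $\psi_n,\varphi_n$ and the use of the $\MD$-norm, and it is the reason a naive truncated-inversion regularisation $f_n=\int_{\lvert z\rvert\le n}V_{g_0}f(z)\,\pi(z)g_0\,dz$ (with $g_0$ an $L^2$-normalised window) does not suffice, its best constant being $\norm{g_0}_{\FA}>1$.
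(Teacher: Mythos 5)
Your argument is correct, but it is a genuinely different proof from the paper's. The paper constructs nothing: it invokes \cite[Proposition 6.15]{Jakobsen2018} to obtain a \emph{net} $\{f_\alpha\}_{\alpha}\subseteq\FA(\rd)$ converging w-$\ast$ to $f$ with $\norm{f_\alpha}_{\MD}\le\norm{f}_{\MD}$, identifies the closed ball $B_R\subseteq\MD(\rd)$, $R=\norm{f}_{\MD}$, with the w-$\ast$ closure of $E_R=\FA(\rd)\cap B_R$, and then uses separability of $\FA$ together with metrizability of the relative w-$\ast$ topology on $B_R$ (\cite[Theorem 2.6.23]{Megginson}) to turn the net into a sequence. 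Your product--convolution regularisation $E_n f=\psi_n\cdot(\varphi_n\ast f)$ produces the sequence directly, so the metrizability remark you make at the start is actually never needed in your argument; what your proof needs instead are the module estimates $\norm{\varphi\ast u}_{\MD}\le\norm{\varphi}_{L^1}\norm{u}_{\MD}$ and $\norm{\psi\, u}_{\MD}\le\|\widehat{\psi}\|_{L^1}\norm{u}_{\MD}$, and these do hold with constant exactly $1$ because the window norm $\normFA{h}=\norm{V_gh}_{L^1}$ is invariant under time-frequency shifts; hence your normalisations $\varphi_n\ge 0$, $\int\varphi_n=1$, $\widehat{\psi_n}\ge 0$, $\psi_n(0)=1$ really do make $E_n$ an $\MD$-contraction. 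In effect you reprove (constructively, and directly with a sequence) the density result the paper outsources to Jakobsen; the paper's route is shorter given the references, yours is self-contained and exhibits explicit regularisers, which also makes the constant $1$ transparent. Finally, you are right to replace the displayed bound by $\sup_n\norm{f_n}_{\MD}\le\norm{f}_{\MD}$: as literally printed, $\sup_n\normFA{f_n}\le\norm{f}_{\MD}$ cannot hold for $f\notin\FA(\rd)$ (an $\FA$-bounded sequence converging w-$\ast$ to $f$ would force $V_gf\in L^1(\rdd)$ by Fatou's lemma, i.e. $f\in\FA(\rd)$), and the paper's own proof, like yours, delivers the $\MD$-norm bound, since it produces $f_n\in E_R=\FA(\rd)\cap B_R$.
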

\begin{proof}
	Let us fix $f\in \MD(\rd)\smallsetminus \{0\}$ and set $R\coloneqq\norm{f}_{\MD}$. By \cite[Proposition 6.15]{Jakobsen2018}, there exists a net $\{f_\al\}_{\al\in A}\subseteq \FA(\rd)$ which converges w-$\ast$ to $f$ in $\MD$ and such that $\norm{f_\al}_{\MD}\leq R$ for every $\al\in A$. Set  
	\begin{equation*}
		B_R\coloneqq\left\{f\in \MD(\rd)\,|\,\norm{f}_{\MD}\leq R\right\}\quad\text{and}\quad E_R\coloneqq \FA(\rd)\cap B_R,
	\end{equation*}
	where $\FA$ is identified with its natural embedding in $\MD$, i.e. 
	\begin{equation*}
		E_R\subseteq B_R \subseteq \overline{E_R}^{w-\ast}.
	\end{equation*}
	$\overline{E_R}^{w-\ast}$ is bounded in $\MD(\rd)$. 
	\\
	In fact, if $f_0\in \overline{E_R}^{w-\ast}$, then there exists a net $\{f_\al\}_{\al\in A}\subseteq E_R$ that it  converges w-$\ast$ to $f_0$. Hence, we obtain
	\begin{equation*}
		\norm{f_0}_{\MD}\leq \liminf_{\al\in A}  \norm{f_\al}_{\MD}=\lim_{\al\in A}\inf\{\norm{f_\be}_{\MD}\,|\, \al\preceq \be\}\leq \lim_{\al\in A} R=R.
	\end{equation*}
	In particular, this shows that $\overline{E_R}^{w-\ast}\subseteq B_R$ and we get
	\begin{equation*}
		\overline{E_R}^{w-\ast}=B_R.
	\end{equation*}
	Since $\FA$ separable, and the relative w-$\ast$ topology on $B_R$
	is induced by a metric by \cite[Therem 2.6.23]{Megginson}. Hence the topological w-$\ast$ closure of $E_R$ equals its sequential w-$\ast$ closure. Consequently, there exists a sequence $\{f_n\}_{n}\subseteq E_R$ which converges w-$\ast$ to $f$ in $\MD(\rd)$.
\end{proof}

\begin{remark}
	The above lemma holds also for any LCA second countable group $\cG$ replacing $\rd$, see \cite[Theorem 2]{deVries1978} for the separability of $\FA(\cG)$.
\end{remark}

\begin{lemma}
	For any $S\in\MO$, there exists a sequence $\netn{S}\subseteq \FO$ such that 
	\begin{itemize}
		\item[$(i)$] $\normMO{S_n}\lesssim\normMO{S}$;
		\item[$(ii)$] $\limnetn\abs{\dual{(S-S_n)f}{g}}=0$ for all $f,g\in\FA(\rd)$.
	\end{itemize}
\end{lemma}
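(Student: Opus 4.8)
The plan is to transport the statement from operators to their kernels, and then to invoke the function-level approximation already established in Lemma~\ref{Lem-S0-sequentially-w-ast-dense}, which by the remark following it is available on $\rdd$. By the Outer Kernel Theorem (Theorem~\ref{Th-OuterKernel}) the assignment $T\mapsto\K{T}$ is an isomorphism $\MO\cong\MD(\rdd)$, so $S$ corresponds to a kernel $\K{S}\in\MD(\rdd)$ with $\normMO{S}\asymp\norm{\K{S}}_{\MD(\rdd)}$. This reduces the whole problem to approximating the single distribution $\K{S}$ by functions in $\FA(\rdd)$.

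First I would apply Lemma~\ref{Lem-S0-sequentially-w-ast-dense}, in its version on the LCA second countable group $\rdd$, to $\K{S}$. This produces a sequence $\netn{k}\subseteq\FA(\rdd)$ with $k_n\to\K{S}$ in the w-$\ast$ topology of $\MD(\rdd)$ and $\sup_n\norm{k_n}_{\MD(\rdd)}\leq\norm{\K{S}}_{\MD(\rdd)}$. Since each $k_n$ lies in $\FA(\rdd)$, the Inner Kernel Theorem (Theorem~\ref{Th-InnerKernel}) yields a unique $S_n\in\FO$ with $\K{S_n}=k_n$, and the embedding $\FO\hookrightarrow\MO$ is compatible with the kernel maps via $\FA(\rdd)\hookrightarrow\MD(\rdd)$, so $k_n$ is also the $\MO$-kernel of $S_n$. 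This defines the candidate sequence $\netn{S}\subseteq\FO$.

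For $(i)$ I would simply chain the two kernel isomorphisms: viewing $k_n$ as an element of $\MD(\rdd)$, we get $\normMO{S_n}\asymp\norm{k_n}_{\MD(\rdd)}\leq\norm{\K{S}}_{\MD(\rdd)}\asymp\normMO{S}$, which is exactly $\normMO{S_n}\lesssim\normMO{S}$. For $(ii)$, fix $f,g\in\FA(\rd)$; then $g\otimes f\in\FO$, so its kernel $\K{g\otimes f}=g(x)\overline{f(y)}$ is an element of $\FA(\rdd)$ and hence an admissible test function for the w-$\ast$ topology. Applying the outer-kernel pairing $\dual{Tf}{g}=\dual{\K{T}}{\K{g\otimes f}}$ to both $S$ and $S_n$ gives
\begin{equation*}
	\dual{(S-S_n)f}{g}=\dual{\K{S}-k_n}{\K{g\otimes f}},
\end{equation*}
and the right-hand side tends to $0$ precisely because $k_n\to\K{S}$ w-$\ast$ in $\MD(\rdd)$, tested against $\K{g\otimes f}\in\FA(\rdd)$.

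The only points that genuinely require care are the availability of the approximation lemma on $\rdd$ in place of $\rd$ -- guaranteed by the remark after Lemma~\ref{Lem-S0-sequentially-w-ast-dense}, since $\rdd$ is an LCA second countable group -- and the observation that the test element $\K{g\otimes f}$ indeed lands in $\FA(\rdd)$, so that the w-$\ast$ limit may legitimately be evaluated against it. Both are immediate, and no estimate beyond the two kernel isomorphisms is needed, so I do not expect a substantial obstacle; the proof is essentially a dualization of the function-level density statement through Feichtinger's kernel theorems.
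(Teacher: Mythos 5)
Your proof is correct and is precisely the argument the paper has in mind: its own proof is stated in one line as a ``straightforward application'' of the Outer and Inner Kernel Theorems together with Lemma~\ref{Lem-S0-sequentially-w-ast-dense}, which is exactly the route you take (transport $S$ to $\K{S}\in\MD(\rdd)$, approximate by $k_n\in\FA(\rdd)$ with the norm bound, pull back to $S_n\in\FO$, and test against kernels $\K{g\otimes f}$). Your write-up simply supplies the details, including the two points genuinely worth checking --- the validity of the approximation lemma on $\rdd$ and the compatibility of the inner and outer kernel maps under $\FO\hookrightarrow\MO$.
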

\begin{proof}
	This is a straightforward application of the Kernel Theorems \ref{Th-OuterKernel} and \ref{Th-OuterKernel} and of Lemma \ref{Lem-S0-sequentially-w-ast-dense}.
\end{proof}

Convergence as in item $(ii)$ of the above lemma will be also denoted by
\begin{equation*}
	\wsconvMOn{S}{S}\qquad\text{or}\qquad S=\wslimMOn S_n\qquad\text{in}\quad\MO.
\end{equation*}

\begin{lemma}\label{Lem-adjoint-FO}
	Let $S\colon \FA\to\MD$ be in $\FO$. Then the Banach space adjoint $S^\ast\colon\MD\to\FA$ is in $\FO$ with kernel 
	\begin{equation}\label{Eq-ker-adjoint}
		\K{S^\ast}(y,u)=\overline{\K{S}(u,y)}.
	\end{equation}
\end{lemma}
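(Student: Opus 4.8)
The plan is to read off both assertions from the atomic decomposition of Feichtinger operators (Lemma~\ref{Lem-representation-FO}) together with the Inner Kernel Theorem~\ref{Th-InnerKernel}. Write $S=\sumn f_n\otimes g_n$ with $\seqn f,\seqn g\subseteq\FA(\rd)$ and $\sumn\normFA{f_n}\normFA{g_n}<\infty$. Since $S$ is in particular trace class, it has a well-defined Hilbert-space adjoint on $L^2(\rd)$, and the adjoint meant in the statement is the one induced by the sesquilinear duality of the pair $(\MD,\FA)$, i.e. the map $S^\ast\colon\MD\to\FA$ characterized by $\dual{\nu}{S^\ast\sigma}=\overline{\dual{\sigma}{S\nu}}$ for $\sigma,\nu\in\MD$; restricted to $L^2(\rd)$ this is the usual relation $\la Sf,g\ra=\la f,S^\ast g\ra$. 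The only operator-theoretic input I will need is the elementary rank-one identity $(f\otimes g)^\ast=g\otimes f$.

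For membership, I take adjoints term by term to obtain $S^\ast=\sumn(f_n\otimes g_n)^\ast=\sumn g_n\otimes f_n$; this exchange is legitimate because the series converges in $\TC$ (as $\norm{f_n\otimes g_n}_{\TC}\lesssim\normFA{f_n}\normFA{g_n}$) and passing to the adjoint is an isometric involution on $\TC$. Each summand $g_n\otimes f_n$ has kernel $g_n(y)\overline{f_n(u)}\in\FA(\rdd)$ with $\FA(\rdd)$-norm comparable to $\normFA{g_n}\normFA{f_n}$, since $\FA(\rdd)=\FA(\rd)\hat{\otimes}\FA(\rd)$; hence $\sumn\K{g_n\otimes f_n}$ converges absolutely in $\FA(\rdd)$. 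By Theorem~\ref{Th-InnerKernel} this limit kernel defines an operator in $\FO$, which must coincide with $S^\ast$; in particular $S^\ast\in\FO$. The kernel formula is then immediate, because the kernel of a convergent series of rank-one operators is the sum of the kernels:
\[
\K{S^\ast}(y,u)=\sumn\K{g_n\otimes f_n}(y,u)=\sumn g_n(y)\overline{f_n(u)}=\overline{\sumn f_n(u)\overline{g_n(y)}}=\overline{\K{S}(u,y)},
\]
using $\K{f_n\otimes g_n}(x,y)=f_n(x)\overline{g_n(y)}$. Alternatively one observes directly that complex conjugation and the coordinate flip $(y,u)\mapsto(u,y)$ are bounded on $\FA(\rdd)$, so $\overline{\K{S}(u,y)}\in\FA(\rdd)$ and defines an element of $\FO$ via Theorem~\ref{Th-InnerKernel}.

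I expect the main obstacle to be not the computation but the bookkeeping needed to confirm that the operator constructed above is genuinely the Banach-space adjoint of the statement, and not merely the Hilbert-space adjoint of $S|_{L^2}$. Concretely, I would verify the defining relation $\dual{\nu}{S^\ast\sigma}=\overline{\dual{\sigma}{S\nu}}$ for all $\sigma,\nu\in\MD$: expanding both sides through the Inner Kernel identity $\dual{\nu}{T\sigma}=\dual{\nu\widetilde{\otimes}\overline{\sigma}}{\K T}$ turns the claim into the matching $\dual{\nu\widetilde{\otimes}\overline{\sigma}}{\K{S^\ast}}=\overline{\dual{\sigma\widetilde{\otimes}\overline{\nu}}{\K S}}$, which follows from $\K{S^\ast}(y,u)=\overline{\K S(u,y)}$ after carefully tracking the conjugations built into the sesquilinear pairing and into $\nu\widetilde{\otimes}\overline{\sigma}$. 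It is convenient to check this first on rank-one operators, where it is the transparent statement $(f\otimes g)^\ast=g\otimes f$, and then extend to all of $\FO$ by the absolute convergence of the decomposition; if one prefers to test against $\MD$ directly, the passage from $\FA$-elements to $\MD$ is supplied by the w-$\ast$ density in Lemma~\ref{Lem-S0-sequentially-w-ast-dense}. Once this identification is settled, the membership and kernel formula established above complete the proof.
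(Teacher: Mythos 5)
Your proof is correct, but it takes a genuinely different route from the paper's. The paper argues directly at the level of the duality pairing: for $f,g\in\FA(\rd)$ it writes $\dual{Sf}{g}=\intrdd\K{S}(y,u)\overline{g(y)}f(u)\,dy\,du$, rearranges by Fubini to recognize this as $\dual{f}{S^\ast g}$ with $S^\ast g(y)=\intrd\overline{\K{S}(u,y)}g(u)\,du$, and so reads off $\K{S^\ast}(y,u)=\overline{\K{S}(u,y)}$ in a single computation; membership in $\FO$ then follows because this function lies in $\FA(\rdd)$ (conjugation and the coordinate flip preserve $\FA(\rdd)$), invoking Theorem \ref{Th-InnerKernel}. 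You instead route everything through the atomic decomposition of Lemma \ref{Lem-representation-FO}, the rank-one identity $(f\otimes g)^\ast=g\otimes f$, the isometry of the adjoint on $\TC$, and absolute convergence of the kernels in $\FA(\rdd)=\FA(\rd)\hat{\otimes}\FA(\rd)$. Both arguments are sound. What the paper's computation buys is brevity, and the fact that the Banach-space adjoint is produced directly by the pairing manipulation, so no separate identification step is needed; what your decomposition argument buys is structural transparency---the flip-and-conjugate form of the kernel is exactly the rank-one identity summed up---at the cost of the bookkeeping you correctly flag, namely checking that the operator built from the summed kernel is the adjoint on all of $\MD$ and not merely the Hilbert-space adjoint of $S$ restricted to $L^2$, which you resolve via the Inner Kernel identity and the w-$\ast$ density of Lemma \ref{Lem-S0-sequentially-w-ast-dense}. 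Note also that your closing ``alternative'' observation (conjugation and the flip are bounded on $\FA(\rdd)$, so $\overline{\K{S}(u,y)}$ defines an element of $\FO$) is in essence the paper's own membership argument, so your two routes bracket the paper's proof from both sides.
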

\begin{proof}
	We take $f,g\in\FA(\rd)$, then
	\begin{align*}
		\dual{Sf}{g}&=\intrdd\K{T}(y,u)\overline{g(y)}f(u)\,dydu\\
		&=\intrd f(u)\overline{\intrd\overline{\K{S}(y,u)}g(y)\,dy}\,du\\
		&=\dual{f}{S^\ast g}.
	\end{align*}
	Hence, $S^\ast g(y)=\intrd \overline{\K{S}(u,y)}g(u)\,du$, i.e. $\K{S^\ast}(y,u)=\overline{\K{S}(u,y)}$ which is an element of $\FA(\rdd)$.
\end{proof}

\begin{corollary}
	$\FO$ is a Banach $\ast$-algebra. 
\end{corollary}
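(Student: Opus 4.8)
The plan is to verify the three defining properties of a Banach $\ast$-algebra, drawing on the results already established. From Theorem~\ref{Th-InnerKernel} we know that $\FO$ is a Banach algebra under composition, so the only genuinely new ingredient is the involution. I would take the involution to be the Banach space adjoint $S\mapsto S^\ast$, which Lemma~\ref{Lem-adjoint-FO} already shows maps $\FO$ into itself with kernel $\K{S^\ast}(y,u)=\overline{\K{S}(u,y)}$. Thus the map is well-defined on $\FO$; it remains to check that it is a conjugate-linear, isometric, involutive anti-homomorphism.

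First I would record the algebraic axioms of an involution. Conjugate-linearity, $(\alpha S+\beta T)^\ast=\overline{\alpha}S^\ast+\overline{\beta}T^\ast$, and the involutive property $(S^\ast)^\ast=S$ both follow immediately from the kernel formula \eqref{Eq-ker-adjoint}, since conjugation and the swap of the two kernel variables are each conjugate-linear/linear involutions on $\FA(\rdd)$. The anti-multiplicativity $(S\circ T)^\ast=T^\ast\circ S^\ast$ I would verify by combining the composition rule \eqref{Eq-ker-composition} for kernels with the adjoint kernel formula: computing $\K{(S\circ T)^\ast}(y,u)=\overline{\K{S\circ T}(u,y)}=\overline{\intrd\K{T}(u,t)\K{S}(t,y)\,dt}$ and comparing it with $\K{T^\ast\circ S^\ast}(y,u)=\intrd\K{S^\ast}(y,t)\K{T^\ast}(t,u)\,dt=\intrd\overline{\K{S}(t,y)}\,\overline{\K{T}(u,t)}\,dt$ shows the two kernels agree, hence the operators coincide by the isomorphism $T\mapsto\K{T}$ of Theorem~\ref{Th-InnerKernel}.

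Next I would address the isometry of the involution, i.e. $\normFO{S^\ast}=\normFO{S}$. Since $\FO$ carries the norm of $B(\MD,\FA)$, which by Theorem~\ref{Th-InnerKernel} is equivalent to $\normFA{\K{\cdot}}$, it suffices to observe that the two operations defining the adjoint kernel in \eqref{Eq-ker-adjoint}—complex conjugation and the coordinate swap $(x,y)\mapsto(y,x)$—are both isometric automorphisms of $\FA(\rdd)$. Conjugation preserves the $\FA$-norm because $\abs{V_g f}=\abs{V_{\overline{g}}\,\overline{f}}$ up to a reflection in the time-frequency plane, and the coordinate flip is a measure-preserving linear change of variables that similarly preserves $\FA(\rdd)$; hence $\normFA{\K{S^\ast}}=\normFA{\K{S}}$. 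Because the $\FO$-norm is only equivalent (not necessarily equal) to $\normFA{\K{\cdot}}$, I would phrase the conclusion as saying $\FO$ is a Banach $\ast$-algebra with an isometric involution after possibly renorming with the equivalent norm $\normFA{\K{\cdot}}$, which is the natural choice here.

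The main obstacle I anticipate is bookkeeping rather than any deep difficulty: one must be careful that all manipulations of the kernels stay within $\FA(\rdd)$ so that the passage back to operators via Theorem~\ref{Th-InnerKernel} is legitimate, and that the submultiplicativity of the norm under composition (already guaranteed by the Banach-algebra statement of Theorem~\ref{Th-InnerKernel}) is compatible with the chosen involution. In particular, I would not expect a $C^\ast$-identity $\normFO{S^\ast S}=\normFO{S}^2$ to hold—and the statement only claims a Banach $\ast$-algebra—so I would be careful to claim only the $\ast$-algebra axioms and isometry of the involution, not the stronger $C^\ast$ property. Given that Lemma~\ref{Lem-adjoint-FO} has done the hard analytic work of placing $S^\ast$ in $\FO$, the remaining proof is short and essentially a verification of identities on kernels.
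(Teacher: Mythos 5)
Your proposal is correct and follows essentially the same route as the paper, which states this corollary without proof precisely because it is the immediate combination of Theorem~\ref{Th-InnerKernel} (Banach algebra under composition) and Lemma~\ref{Lem-adjoint-FO} (the adjoint stays in $\FO$ with kernel $\overline{\K{S}(u,y)}$). Your explicit kernel verifications of conjugate-linearity, involutivity, anti-multiplicativity, and the norm-equivalence caveat are exactly the routine checks the paper leaves implicit.
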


We notice that $(S^\ast)\check{{}}=(\check{S})^\ast$, so that from now on we shall simply write $\check{S}^\ast$ when necessary.

\begin{lemma}\label{Lem-conv-FO-FO-FA-FO}
	\begin{itemize}
		\item[$(i)$] The following applications are surjective isometries:
		\begin{itemize}
			\item[$(i-a)$] $\alpha_z\colon\FO\to\FO$, for any $z=(x,\om)\in\rdd$, and 
			\begin{equation}\label{Eq-ker-alpha-z-S}
				\K{\alpha_z S}(y,u)=e^{2\pi i (y-u)\om}\K{S}(y-x,u-x);
			\end{equation}
			\item[$(i-b)$] $\check{\cdot}\colon\FO\to\FO$ and 
			\begin{equation}\label{Eq-ker-check-S}
				\K{\check{S}}(y,u)=\K{S}(-y-u);	
			\end{equation}
			\item[$(i-c)$] $\alpha_z\colon\MO\to\MO$, for any $z\in\rdd$;
			\item[$(i-d)$] $\check{\cdot}\colon\MO\to\MO$;
		\end{itemize} 
		\item[$(ii)$] Let $S,T\in\FO$ and $b\in\FA(\rdd)$. Then
		\begin{equation*}
			S\star T\in\FA(\rdd),\qquad b\star S\in\FO;
		\end{equation*}
		\item[$(iii)$] The kernel of the mixed-state localization operator $b\star S$ is given by
		\begin{equation}
			\K{b\star S}(y,u)=\intrd b(x,\om) e^{2\pi i (y-u)\om}\K{S}(y-x,u-x)\,dxd\om;
		\end{equation}
		for very $z=(x,\om)\in\rdd$ the kernel of $S\alpha_z\check{T}$ is
		\begin{equation}
			\K{S\alpha_z \check{T}}(y,u)=\intrd e^{2\pi i (y-t)\om}\K{T}(x-y,x-t)\K{S}(t,u)\,dt.
		\end{equation}
	\end{itemize}
\end{lemma}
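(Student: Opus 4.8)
The plan is to reduce every assertion to the Inner and Outer Kernel Theorems together with Lemma \ref{Lem-QHA-new-tools}, so that no operator estimate has to be carried out by hand.

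For $(i)$ the key remark I would use is that each time-frequency shift $\pi(z)$ and the parity operator $P$ are surjective isometries of \emph{both} $\FA(\rd)$ and $\MD(\rd)$: the first is the translation- and modulation-invariance of Feichtinger's algebra, and the second follows by duality. Since $\alpha_z S=\pi(z)S\pi(z)^\ast$ and $\check S=PSP$ are conjugations by such isometries (note that $\pi(z)^\ast$ is a unimodular multiple of $\pi(-z)$, hence again a surjective isometry of the relevant space), and since conjugating by surjective isometries leaves the operator norm of $B(\MD,\FA)$ invariant, I obtain at once that $\alpha_z,\check{\cdot}\colon\FO\to\FO$ are isometries; surjectivity is immediate from $\alpha_z^{-1}=\alpha_{-z}$ and $P^2=\Id$. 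The same argument with the roles of $\FA$ and $\MD$ exchanged handles $(i\text{-}c)$ and $(i\text{-}d)$ for $\MO=B(\FA,\MD)$. To produce the kernel identities \eqref{Eq-ker-alpha-z-S} and \eqref{Eq-ker-check-S} I would insert $\pi(z)^\ast f(t)=e^{-2\pi i(t+x)\om}f(t+x)$ and $Pf(t)=f(-t)$ into the integral-kernel representation of $S$ and perform the obvious change of variables.

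For $(ii)$ I would invoke Lemma \ref{Lem-QHA-new-tools}$(v)$ to write $\cF_\sigma(S\star T)=\FtW S\cdot\FomtW T$; by Corollary \ref{Cor-FtW-tSR-iso-FO-FA}$(i)$ both factors lie in $\FA(\rdd)$, and since $\FA(\rdd)$ is a Banach algebra under pointwise multiplication (a standard property of Feichtinger's algebra) and $\cF_\sigma$ is an automorphism of it, this forces $S\star T\in\FA(\rdd)$. Analogously, Lemma \ref{Lem-QHA-new-tools}$(vi)$ gives $\FtW(b\star S)=\cF_\sigma b\cdot\FtW S\in\FA(\rdd)$, whence $b\star S=\tSR(\cF_\sigma b\cdot\FtW S)\in\FO$ through the isomorphism $\tSR=\FtW^{-1}$ of Corollary \ref{Cor-FtW-tSR-iso-FO-FA}$(i)$.

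Finally, for $(iii)$ I would pass the bounded linear kernel map $\FO\to\FA(\rdd)$ inside the integral $b\star S=\intrdd b(z)\alpha_z(S)\,dz$, which is legitimate because that integral converges in $\FA(\rdd)$ by $(ii)$, and then substitute \eqref{Eq-ker-alpha-z-S}; for the kernel of $S\alpha_z\check T$ I would first combine \eqref{Eq-ker-check-S} and \eqref{Eq-ker-alpha-z-S} to get $\K{\alpha_z\check T}(y,t)=e^{2\pi i(y-t)\om}\K{T}(x-y,x-t)$ and then feed this into the composition formula \eqref{Eq-ker-composition} applied to $S\circ(\alpha_z\check T)$. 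I expect the only genuinely delicate points to be the isometric action of $\pi(z)$ and $P$ on $\MD$ (which rests on duality from their action on $\FA$) and the interchange of the kernel map with the operator-valued integral in $(iii)$; everything else is routine once the Kernel Theorems are in place.
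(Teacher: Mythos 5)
Your proposal is correct, but in parts $(i)$ and $(ii)$ it takes a genuinely different route from the paper. For $(i)$ the paper does not argue via conjugation by isometries: it invokes the criterion of \cite[Corollary 3.3]{FeiJak2022}, namely that a continuous linear $S\colon\FA\to\MD$ lies in $\FO$ if and only if $\intrdd\intrdd\abs{\dual{S\pi(z)g_1}{\pi(w)g_2}}\,dz\,dw<\infty$ for all $g_1,g_2\in\FA(\rd)$, and leaves the computations to the reader. For $(ii)$ the paper works by hand: it expands $S=\sumn f_n\otimes g_n$ and $T=\sumn h_n\otimes l_n$ via Lemma \ref{Lem-representation-FO}, computes $S\star T(z)$ as an absolutely convergent double series of terms $V_{Pl_m}f_n(-x,\om)\cdot\overline{V_{Ph_m}g_n(-x,\om)}$, each in $\FA(\rdd)$, and for $b\star S$ verifies the integrability criterion above by a Fubini argument. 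Your route through Lemma \ref{Lem-QHA-new-tools}$(v)$--$(vi)$, the pointwise Banach algebra property of $\FA(\rdd)$, and the isomorphisms of Corollary \ref{Cor-FtW-tSR-iso-FO-FA} is shorter and cleaner; what the paper's computation buys in exchange is the explicit series formula for $S\star T$, recorded as a corollary right after the lemma and reused later (e.g.\ in the $\tau$-Cohen class section). Note that your argument for $b\star S$ tacitly needs injectivity of $\FtW$ on $\TC$ (a standard QHA fact) in order to identify $b\star S$ with $\tSR(\cF_\sigma b\cdot\FtW S)$. In $(iii)$ your derivation of $\K{S\alpha_z\check T}$ from \eqref{Eq-ker-alpha-z-S}, \eqref{Eq-ker-check-S} and \eqref{Eq-ker-composition} is exactly the paper's; for $\K{b\star S}$ the paper instead computes $\dual{(b\star S)f}{g}$ weakly against $f,g\in\FA(\rd)$ and changes variables, thereby avoiding operator-valued integration altogether.

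Two points in your write-up need tightening. First, in $(i)$, invariance of the $B(\MD,\FA)$-norm under conjugation does not by itself put $\alpha_z S$ and $\check S$ back into $\FO$: the definition \eqref{Eq-Def-FO} additionally requires that norm-bounded w-$\ast$ convergent sequences in $\MD$ be mapped to norm convergent sequences in $\FA$. This does hold, because $\pi(z)^\ast$ and $P$ acting on $\MD$ are the Banach space adjoints of their restrictions to $\FA$, hence w-$\ast$ continuous and bounded, so they preserve bounded w-$\ast$ convergent sequences; but this must be said, since it is exactly the property that distinguishes $\FO$ from $B(\MD,\FA)$. (Also, the isometry of $P$ on $\FA$ requires choosing an even window, say a Gaussian, in the $\FA$-norm---a caveat the paper shares.) Second, in $(iii)$, ``the integral converges in $\FA(\rdd)$ by $(ii)$'' is not the right justification for pulling the kernel map inside the integral: what legitimizes the interchange is Bochner integrability of $z\mapsto b(z)\alpha_z(S)$ as an $\FO$-valued map, which follows from the isometry in $(i)$ (so that $\normFO{b(z)\alpha_z S}=\abs{b(z)}\,\normFO{S}$ with $b\in\FA(\rdd)\subset L^1(\rdd)$) together with continuity of $z\mapsto\alpha_z S$; after that, the bounded kernel map $\FO\to\FA(\rdd)$ commutes with the Bochner integral and \eqref{Eq-ker-alpha-z-S} yields the stated formula.
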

\begin{proof}
	$(i)$ We leave the elementary computations to the interest reader, and note that in order to prove $\alpha_z S,\check{S}\in\FO$ the result \cite[Corollary 3.3]{FeiJak2022} is useful. A continuous and linear operator $S\colon\FA\to\MD$ is a Feichtinger operator if and only if
	\begin{equation*}
		\intrdd\intrdd \abs{\dual{S\pi(z)g_1}{\pi(w)g_2}}\,dzdw
	\end{equation*}
	is finite for any $g_1,g_2\in\FA(\rd)$.\\
	$(ii)$  We first address the convolution between two Feichtinger operators. By item $(i)$ and the fact that $\FO$ is a Banach algebra under composition, we have that $S\alpha_z \check{T}$ is in $\FO$ for any $z=(x,\om)\in\rdd$. We have by \cite[Corollary 3.15]{FeiJak2022}:
	\begin{align*}
		S\star T(z)&=\tr(S\alpha_z \check{T})=\intrd\K{S\alpha_z \check{T}}(y,y)\,dy=\intrdd\K{\alpha_z\check{T}}(y,t)\K{S}(t,y)\,dtdy\\
		&=\intrdd e^{2\pi i (y-t)\om}\K{T}(x-y,x-t)\K{S}(t,y)\,dtdy\\
		&=\intrd\left(\intrd \K{T}(x-y,x-t)\K{S}(t,y)e^{-2\pi i t\om}\,dt\right)e^{2\pi i y\om}\,dy\\
		&=\cF^{-1}_2\cF_1\left(\Phi T_{(x,x)}\K{T}\cdot \K{S}\right)(\om,\om),
	\end{align*}
	where $\Phi F(t,y)\coloneqq F(-y,-t)$, $\cF_1$ and $\cF_2$ are the partial Fourier transforms with respect to the first and second variable, respectively. Consider now $f,g,h,l\in\FA(\rd)$, it is useful to compute the following where $P$ is the parity operator:
	\begin{align*}
		\cF^{-1}_2\cF_1\left(\Phi T_{(x,x)}\K{h\otimes l}\cdot\K{f\otimes g}\right)&(\om,\om)=\intrd\left(\intrd h(x-y)\overline{l(x-t)}f(t)\overline{g(y)}e^{-2\pi i t\om}\,dt\right)e^{2\pi i y\om}\,dy\\
		&=\intrd f(t)e^{-2\pi i t\om}\overline{l(x-t)}\,dt\cdot\intrd \overline{g(y)}e^{2\pi i y\om}h(x-y)\,dy\\
		&=V_{P l}f(-x,\om)\cdot\overline{V_{P h}g(-x,\om)}.
	\end{align*}
	Hence $\cF^{-1}_2\cF_1\left(\Phi T_{(x,x)}\K{h\otimes l}\cdot\K{f\otimes g}\right)(\om,\om)$ is in $\FA(\rdd)$ as a function of $(x,\om)$. We consider now two representations $S=\sumn f_n\otimes g_n$ and $T=\sumn h_n\otimes l_n$, see Lemma \ref{Lem-representation-FO}, so that 
	\begin{equation*}
		\K{S}=\sumn \K{f_n\otimes g_n},\qquad\K{T}=\sumn\K{h_n\otimes l_n}.
	\end{equation*}
	It follows that we can write
	\begin{align*}
		S\star T(z)&=\cF^{-1}_2\cF_1\left(\Phi T_{(x,x)}\sum_M^\infty\K{h_m\otimes l_m}\cdot \sumn \K{f_n\otimes g_n} \right)(\om,\om)\\
		&=\sum_{m=1}^\infty\sumn \cF^{-1}_2\cF_1\left(\Phi T_{(x,x)}\K{h_m\otimes l_m}\cdot\K{f_n\otimes g_n}\right)(\om,\om)\\
		&=\sum_{m=1}^\infty\sumn V_{P l_m}f_n(-x,\om)\cdot\overline{V_{P h_m}g_n(-x,\om)}\in\FA(\rdd),
	\end{align*}
	the convergence is guaranteed by Lemma \ref{Lem-representation-FO}.\\
	Concerning $b\star S$, the following estimate for any $f,g\in\FA(\rd)$ proves that $b\star S\in\MO$:
	\begin{equation*}
		\abs{\dual{(b\star S)f}{g}}\leq \intrdd \abs{b(z)}\abs{\dual{S\pi(z)^\ast f}{\pi(z)^\ast g}}\,dz\lesssim \norm{b}_{L^1}\normMO{S}\normFA{f}\normFA{g}.
	\end{equation*}
	We exploit \cite[Theorem 3.2 (ii)]{FeiJak2022} to show that $b\star S$ is in $\FO$. For $g_1,g_2\in\FA(\rd)$ we have 
	\begin{align*}
		\intrdd\intrdd& \abs{\dual{(b\star S)\pi(w)g_1}{\pi(u)g_2}}\,dwdu\leq\intrdd\intrdd\intrdd\abs{b(z)}\\
		&\times\abs{\dual{S\pi(w-z)g_1}{\pi(u-z)g_2}}\,dzdwdu\\
		&=\intrdd\intrdd\abs{\dual{S\pi(w')g_1}{\pi(u')g_2}}\,dw'du'\cdot\intrdd\abs{b(z)}\,dz<+\infty.
	\end{align*}
	$(iii)$ We compute explicitly the kernel of the operator given by the convolution $b\star S$:
	\begin{align*}
		\dual{(b\star S)f}{g}&=\intrdd b(x,\om)\intrdd \K{S}(y,u)\overline{\pi(-z)g(y)}\pi(-z)f(u)\,dydu\,dxd\om\\
		&=\intrdd \intrdd b(x,\om)e^{2\pi i (y-u)\om}\K{S}(y,u)\overline{g(y+x)}f(u+x)\,dxd\om\,dydu,
	\end{align*}
	for $z=(x,\om)\in\rdd$. The change of variables $y'=y+u, u'=u+x$ gives the desired result. The last claim is just a direct application of \eqref{Eq-ker-alpha-z-S}, \eqref{Eq-ker-check-S} and the Banach algebra property for $\FO$ \cite[Lemma 3.10]{FeiJak2022}.
\end{proof}

\begin{corollary}
	Let $S,T\in\FO$ with spectral decompositions $S=\sumn f_n\otimes g_n$ and $T=\sumn h_n\otimes l_n$, where $ \seqn{f},\seqn{g},\seqn{h},\seqn{l}\subseteq
	\FA(\rd)$ with $\sumn\normFA{f_n}\normFA{g_n}<+\infty$, $\sumn\normFA{h_n}\normFA{l_n}<+\infty$.
	Then, with the notations introduced in the proof of Lemma \ref{Lem-conv-FO-FO-FA-FO},  for every $z=(x,\om)\in\rdd$:
	\begin{align}
		S\star T(z)&=\cF^{-1}_2\cF_1\left(\Phi T_{(x,x)}\K{T}\cdot \K{S}\right)(\om,\om)\notag\\
		&=\sum_{m=1}^\infty\sumn V_{P l_m}f_n(-x,\om)\cdot\overline{V_{P h_m}g_n(-x,\om)}.
	\end{align}
\end{corollary}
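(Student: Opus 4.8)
The plan is to observe that both identities have in fact already been derived inside the proof of Lemma~\ref{Lem-conv-FO-FO-FA-FO}~$(ii)$, so the proof amounts to isolating and reassembling those computations. First I would start from the definition of the operator convolution and combine the trace formula of Lemma~\ref{Lem-representation-FO} with the kernel-of-composition identity \eqref{Eq-ker-composition} to write
\begin{equation*}
	S\star T(z)=\tr\left(S\alpha_z\check{T}\right)=\intrd\K{S\alpha_z\check{T}}(y,y)\,dy=\intrdd\K{\alpha_z\check{T}}(y,t)\K{S}(t,y)\,dtdy.
\end{equation*}
Substituting the kernel of $\alpha_z\check{T}$ via \eqref{Eq-ker-alpha-z-S} and \eqref{Eq-ker-check-S}, grouping the exponential factor as $e^{-2\pi i t\om}$ against $t$ and $e^{2\pi i y\om}$ against $y$, I would recognize the double integral as the value at $(\om,\om)$ of $\cF^{-1}_2\cF_1(\Phi T_{(x,x)}\K{T}\cdot\K{S})$, which is precisely the first asserted equality.

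For the second equality I would first dispose of the rank-one case: a direct computation, already recorded in the proof of Lemma~\ref{Lem-conv-FO-FO-FA-FO}, shows that for $f,g,h,l\in\FA(\rd)$
\begin{equation*}
	\cF^{-1}_2\cF_1\left(\Phi T_{(x,x)}\K{h\otimes l}\cdot\K{f\otimes g}\right)(\om,\om)=V_{Pl}f(-x,\om)\cdot\overline{V_{Ph}g(-x,\om)}.
\end{equation*}
Inserting the spectral decompositions $\K{S}=\sumn\K{f_n\otimes g_n}$ and $\K{T}=\sumn\K{h_n\otimes l_n}$ into the first equality and expanding by bilinearity then produces the claimed double sum.

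The only point requiring care is the interchange of the two summations with the partial Fourier transforms and the $(\om,\om)$-evaluation. I expect this to be the main (and only mild) obstacle, and I would handle it exactly as in Lemma~\ref{Lem-representation-FO}: the hypotheses $\sumn\normFA{f_n}\normFA{g_n}<\infty$ and $\sumn\normFA{h_n}\normFA{l_n}<\infty$, together with the boundedness of the short-time Fourier transform from $\FA(\rd)\times\FA(\rd)$ into $\FA(\rdd)$, force the double series $\sum_{m}\sumn V_{Pl_m}f_n(-x,\om)\overline{V_{Ph_m}g_n(-x,\om)}$ to converge absolutely in $\FA(\rdd)$. This absolute convergence legitimizes the rearrangement and simultaneously shows that $S\star T\in\FA(\rdd)$, completing the argument.
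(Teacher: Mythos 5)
Your proposal is correct and takes essentially the same approach as the paper: the corollary is simply a record of the computations carried out inside the proof of Lemma \ref{Lem-conv-FO-FO-FA-FO}$(ii)$, and your reassembly of them (trace formula plus kernel of composition, substitution of $\K{\alpha_z\check{T}}$, the rank-one computation, then expansion via the spectral decompositions) matches that argument step by step. Your justification of the interchange of sums---absolute convergence of the double series in $\FA(\rdd)$ coming from the two summability hypotheses and the boundedness of the STFT from $\FA(\rd)\times\FA(\rd)$ into $\FA(\rdd)$---is the same point the paper settles by appealing to Lemma \ref{Lem-representation-FO}, only spelled out more explicitly.
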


\begin{definition}\label{Def-conv-FO-MO-MD-FO-FA-MO}
	Let $A\in\MO$, $a\in\MD(\rdd)$, $S\in\FO$ and $b\in\FA(\rdd)$. Consider any sequences $\netn{A}\subseteq \FO$ and $\netn{a}\subseteq\FA(\rdd)$ such that 
	\begin{equation*}
		\wsconvMOn{A}{A}\qquad\text{and}\qquad\wsconvMDn{a}{a}.
	\end{equation*} 
	Then we define:
	\begin{align}
		S\star A&\coloneqq\wslimMOn S\star A_n\qquad\text{in}\quad\MD(\rdd);\label{Eq-Def-S-star-A}\\
		a\star S\coloneqq S\star a&\coloneqq\wslimMOn a_n\star S\qquad\,\text{in}\quad\MO;\label{Eq-Def-a-star-S}\\
		b\star A\coloneqq A\star b&\coloneqq\wslimMOn b\star A_n\qquad\,\text{in}\quad\MO.\label{Eq-Def-b-star-A}
	\end{align}
\end{definition}

\begin{remark}\label{Rem-useful-identities}
	The reader may find it useful to keep in mind the following simple identities, which will be used in the proof of the subsequent proposition. Consider $S\in\FO, \psi,\f,f,g\in\FA(\rd)$ and $z\in\rdd$:
	\begin{align*}
		\alpha_z(\psi\otimes\f)&=\pi(z)\psi\otimes\pi(z)\f;\\
		(\psi\otimes\f)(\K{f\otimes\overline{g}})&=\la f,\f\ra (\psi\otimes g);\\
		(\psi\otimes\f)\star \check{S}(z)&
		=\dual{\pi(z)S\pi(z)^\ast \psi}{\f}.
	\end{align*}
\end{remark}

\begin{proposition}
	The convolutions introduced in Definition \ref{Def-conv-FO-MO-MD-FO-FA-MO}: 
	\begin{itemize}
		\item[$(i)$] They do not depend on the sequences chosen; moreover, taking $A,a,S,b$ as in Definition \ref{Def-conv-FO-MO-MD-FO-FA-MO}:
		\begin{align}
			\dual{S\star A}{b}&=\dual{\K{A}}{\K{b\star \check{S}^\ast}};\label{Eq-action-S-star-A}\\
			\dual{(a\star S)f}{g}&=\dual{a}{(g\otimes f)\star\check{S}^\ast};\label{Eq-action-a-star-S}\\
			\dual{(b\star A)f}{g}&=\dual{\K{A}}{\K{b^\ast \star(g\otimes f)}},\label{Eq-action-b-star-A}
		\end{align}
		where $b^\ast(z)\coloneqq \overline{b(-z)}$;
		\item[$(ii)$] These extend the definitions given in Subsection \ref{Subsec-tools-QHA};
		\item[$(iii)$] They are commutative; 
		\item[$(iv)$] Moreover, they are associative. In particular,  if $z\in\rdd$, $T,Q\in\FO$, $\sigma\in\FA(\rdd)$ and $A,a,S,b$ as in Definition \ref{Def-conv-FO-MO-MD-FO-FA-MO} then:
		\begin{align}
			(S\star(T\star b))(z)&=((S\star T)\ast b)(z);\label{Eq-assoc-S-T-b}\\
			S\star(T\star Q)&=(S\star T)\star Q;\label{Eq-assoc-S-T-Q}\\
			(S\star b)\star \sigma&=S\star (b\ast \sigma);\label{Eq-assoc-S-b-sigma}\\
			S\star (T\star a)&=(S\star T)\ast a;\\
			A\star (T\star b)&=(A\star T)\star b;\\
			S\star(T\star A)&=(S\star T)\star A;	
		\end{align}
		in the above identities $\ast$ denotes the usual convolution between two functions or a function and a distribution.
	\end{itemize}
\end{proposition}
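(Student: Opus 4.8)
The plan is to reduce every assertion to the case of \emph{nice} arguments, where all the convolutions coincide with the classical ones of Lemma~\ref{Lem-conv-FO-FO-FA-FO}, and then to transport the identities to the general case by passing to weak-$\ast$ limits, using the action formulas \eqref{Eq-action-S-star-A}--\eqref{Eq-action-b-star-A} as the device that legitimizes the limit interchange.

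For $(i)$ I would first prove each of the three identities with the rough argument replaced by a member of its approximating sequence, i.e. $A$ by $A_n\in\FO$ and $a$ by $a_n\in\FA(\rdd)$. In this situation both sides are classical by Lemma~\ref{Lem-conv-FO-FO-FA-FO}$(ii)$: $S\star A_n\in\FA(\rdd)$, while $b\star\check S^\ast$, $b^\ast\star(g\otimes f)\in\FO$ and $(g\otimes f)\star\check S^\ast\in\FA(\rdd)$ are \emph{fixed} objects (here $\check S^\ast\in\FO$ by Lemma~\ref{Lem-adjoint-FO} and the involution of Lemma~\ref{Lem-conv-FO-FO-FA-FO}, and $b^\ast\in\FA(\rdd)$ since $\FA$ is stable under conjugation and reflection). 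The middle identity \eqref{Eq-action-a-star-S} is then immediate from the third relation of Remark~\ref{Rem-useful-identities}, applied with inner operator $S^\ast$, which identifies $\overline{((g\otimes f)\star\check S^\ast)(z)}$ with the matrix coefficient $\la\alpha_z(S)f,g\ra$; identity \eqref{Eq-action-b-star-A} follows from the trace adjunction $\dualOP{b\star A_n}{T}=\dualOP{A_n}{b^\ast\star T}$, a consequence of $\pi(z)^\ast T\pi(z)=\alpha_{-z}(T)$ and cyclicity of the trace; and \eqref{Eq-action-S-star-A} follows from the same adjunction together with the nice-case commutativity of the operator--operator convolution established for $(iii)$ below. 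With the nice-case identities in hand, the right-hand sides are pairings of the rough argument against a single fixed element of the predual: since $A_n\to A$ weak-$\ast$ in $\MO$ (against \emph{all} of $\FO$, by the norm bound of the preceding lemma together with the density of finite-rank operators in $\FO\cong\FA(\rdd)$) and $a_n\to a$ weak-$\ast$ in $\MD(\rdd)$, they converge to the stated values. This establishes at once the existence of the defining weak-$\ast$ limits, their independence of the approximating sequence, and the formulas \eqref{Eq-action-S-star-A}--\eqref{Eq-action-b-star-A}.

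Part $(ii)$ is a consistency check: if the rough argument already lies in $\FO$ (resp. $\FA(\rdd)$), the constant sequence is admissible in Definition~\ref{Def-conv-FO-MO-MD-FO-FA-MO}, so the new convolution reduces to the classical one of Lemma~\ref{Lem-conv-FO-FO-FA-FO} under the embeddings $\FO\hookrightarrow\MO$ and $\FA(\rdd)\hookrightarrow\MD(\rdd)$. For $(iii)$ I would first record commutativity of the operator--operator convolution in the nice case, $S\star T=T\star S$ for $S,T\in\FO$, which is the standard consequence of the parity operator and cyclicity of the trace (cf. \cite{Werner1984}); the mixed products $a\star S=S\star a$ and $b\star A=A\star b$ are symmetric by Definition~\ref{Def-conv-FO-MO-MD-FO-FA-MO}, and $S\star A=A\star S$ follows by applying nice-case commutativity to the approximants $A_n$ and invoking the same weak-$\ast$ limit.

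For $(iv)$ I would split the relations into two groups. The identities \eqref{Eq-assoc-S-T-b}, \eqref{Eq-assoc-S-T-Q} and \eqref{Eq-assoc-S-b-sigma}, in which every argument is nice, hold by the classical associativity of quantum harmonic analysis, since each intermediate convolution lands again in $\FO$ or $\FA(\rdd)$ by Lemma~\ref{Lem-conv-FO-FO-FA-FO}$(ii)$. For the remaining relations, which contain a rough factor $A\in\MO$ or $a\in\MD(\rdd)$, I would replace that factor by its approximating sequence, apply the already-proven nice-case identity termwise, and pass to the weak-$\ast$ limit on both sides. The main obstacle is exactly this last step: one must verify that the two sides remain weak-$\ast$ convergent to the \emph{same} object, i.e. that the convolutions are separately (sequentially) weak-$\ast$ continuous in the rough slot. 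This is precisely what the action formulas of $(i)$ deliver: tested against any fixed nice object, each side collapses to a pairing of the approximant $A_n$ (or $a_n$) against a single fixed element of the predual, whose convergence is guaranteed by the defining weak-$\ast$ convergence. Hence the limit may be moved through the convolutions, the two limits coincide, and the general identities follow.
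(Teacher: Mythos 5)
Your proof is correct, and its skeleton coincides with the paper's: both arguments reduce each convolution with a rough factor to a pairing of the approximant $A_n$ (or $a_n$) against a single \emph{fixed} element of the predual, and then invoke the defining weak-$\ast$ convergence to obtain existence of the limit, independence of the chosen sequence, and the action formulas \eqref{Eq-action-S-star-A}--\eqref{Eq-action-b-star-A} in one stroke. Where you genuinely differ is in how the nice-case identities are established. For \eqref{Eq-action-a-star-S} your argument via Remark \ref{Rem-useful-identities} is literally the paper's; but for \eqref{Eq-action-S-star-A} and \eqref{Eq-action-b-star-A} the paper performs explicit kernel computations (the formulas of Lemma \ref{Lem-conv-FO-FO-FA-FO}$(iii)$ followed by changes of variables), whereas you derive them structurally from the trace adjunction $\dualOP{b\star A_n}{T}=\dualOP{A_n}{b^\ast\star T}$ (cyclicity of the trace) together with commutativity of the operator--operator convolution on $\TC$; this is cleaner and avoids the bookkeeping, at the price of importing commutativity as a separate ingredient, which the paper never isolates. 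Likewise in $(iv)$: the paper proves \eqref{Eq-assoc-S-T-b}, \eqref{Eq-assoc-S-T-Q}, \eqref{Eq-assoc-S-b-sigma} self-containedly by direct trace manipulations (e.g.\ via $\intrdd\tr(S\alpha_w T)\,dw=\tr(S)\tr(T)$) and leaves the rough-argument identities to the reader as \textquotedblleft similar\textquotedblright, while you outsource the all-nice cases to classical quantum harmonic analysis (Werner) and instead spell out the limit-passing for the rough cases --- which is exactly the step the paper leaves implicit, so your version is less self-contained on the nice side but more complete on the rough side. Finally, you make explicit a point the paper uses tacitly: the weak-$\ast$ convergence in Definition \ref{Def-conv-FO-MO-MD-FO-FA-MO} is tested only against rank-one operators, so pairing $\K{A_n}$ against a general kernel such as $\K{b\star\check{S}^\ast}\in\FA(\rdd)$ requires the upgrade via a uniform norm bound plus density of finite-rank kernels in $\FA(\rdd)=\FA(\rd)\hat{\otimes}\FA(\rd)$; note that for an \emph{arbitrary} admissible sequence (as part $(i)$ requires) the needed bound is not handed to you by the preceding lemma but follows from two applications of the uniform boundedness principle, a half-line worth adding to your write-up.
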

\begin{proof}
	$(i)$ It suffices to show \eqref{Eq-action-S-star-A}, \eqref{Eq-action-a-star-S} and \eqref{Eq-action-b-star-A}, since the other assertions in $(i)$ are evident.\\
	 We start with\eqref{Eq-action-S-star-A}. Let $b\in\FA(\rdd)$ and $z=(x,\om)\in\rdd$, in the subsequent computations we use Lemma \ref{Lem-adjoint-FO} and \ref{Lem-conv-FO-FO-FA-FO}:
	\begin{align*}
		\dual{S\star A}{b}&=\limnetn\dual{S\star A_n}{b}=\limnetn\intrdd\tr(S\al_z\check{A}_n)\overline{b(z)}\,dz\\
		&=\limnetn\intrdd\intrd \K{S\al_z\check{A}_n}(y,y)\,dy \overline{b(z)}\,dz\\
		&=\limnetn\intrdd\intrd \intrd e^{2\pi i (y-t)\om}\K{A_n}(x-y,x-t)\K{S}(t,y)\,dtdy\,\overline{b(z)}\,dz\\
		&=\limnetn\intrdd\intrd\intrd e^{2\pi i (t'-y')\om}\K{A_n}(y',t')\K{S}(x-t',x-y')\,dt'dy'\,\overline{b(z)}\,dz\\
		&=\limnetn\intrd\intrd\K{A_n}(y',t')\overline{\left(\intrdd \overline{\K{S}(x-t',x-y')}e^{2\pi i (y'-t')\om}b(z)\,dz\right)}\,dy'dt'\\
		&=\limnetn\intrd\intrd\K{A_n}(y',t')\overline{\left(\intrdd \overline{\K{\check{S}}(t'-x,y'-x)}e^{2\pi i (y'-t')\om}b(z)\,dz\right)}\,dy'dt'\\
		&=\limnetn\intrd\intrd\K{A_n}(y',t')\overline{\left(\intrdd \K{\check{S}^\ast}(y'-x,t'-x)e^{2\pi i (y'-t')\om}b(z)\,dz\right)}\,dy'dt'\\
		&=\limnetn\intrd\intrd\K{A_n}(y',t')\overline{\K{b\star \check{S}^\ast}(y',t')}\,dy'dt'.
	\end{align*}
	About \eqref{Eq-action-a-star-S}, we take $f,g\in\FA(\rd)$ and compute directly keeping in mind Remark \ref{Rem-useful-identities}:
	\begin{align*}
		\dual{(a\star S)f}{g}&=\limnetn\intrdd a_n(z)\dual{\pi(z)S\pi(z)^\ast f}{g}\,dz\\
		&=\limnetn\intrdd a_n(z)\overline{\dual{\pi(z)S^\ast\pi(z)^\ast g}{f}}\,dz\\
		&=\limnetn\intrdd a_n(z)\overline{(g\otimes f)\star \check{S}^\ast (z)}\,dz.
	\end{align*}
	Let us address \eqref{Eq-action-b-star-A}:
	\begin{align*}
		\dual{(b\star A)f}{g}&=\limnetn\dual{\K{b\star A_n}}{\K{g\otimes f}}\\
		&=\limnetn\intrdd \Big(\intrdd b(x,\om) e^{2\pi i (y-u)\om}\K{A_n}(y-x,u-x)\,dxd\om\Big)\\
		&\times\overline{g(y)}f(u)\,dydu\\
		&=\limnetn\intrdd \K{A_n}(y',u') \overline{\Big(\intrdd \overline{b(x,\om)}e^{-2\pi i (y'-u')\om}}\\
		&\times\overline{g(y'+x)\overline{f(u'+x)}\,dxd\om\Big)}\, dy'du'\\
		&=\limnetn\intrdd \K{A_n}(y',u')  \overline{\Big(\intrdd b^\ast(x',\om')e^{2\pi i (y'-u')\om'}}\\
		&\times\overline{g(y'-x')\overline{f(u'+x')}\,dx'd\om'\Big)}\,dy'du'\\
		&=\limnetn\intrdd  \K{A_n}(y',u') \overline{\K{b^\ast\star (g\otimes f)}(y',u')}\,dy'du',
	\end{align*}
	where for sake of brevity we set $b^\ast(z)\coloneqq \overline{b(-z)}$.\\
	$(ii)$ and $(iii)$ are trivial.\\
	$(iv)$ We prove just \eqref{Eq-assoc-S-T-b}, \eqref{Eq-assoc-S-T-Q} and \eqref{Eq-assoc-S-b-sigma}. The remaining identities can be derived in a similar manner.\\
	In order to show \eqref{Eq-assoc-S-T-b} we compute for $z\in\rdd$:
	\begin{align*}
		(S\star(T\star b))(z)&=\tr\left(S\circ \alpha_z\left(\left(\intrdd b(z)\alpha_w T\,dw\right)\check{{}}\right)\right)\\
		&=\tr\left(S\circ \left(\intrdd b(w)\alpha_z\left(\left(\alpha_w T\right)\check{{}}\right)\,dw\right)\right)\\
		&=\tr\left(S\circ\intrdd b(w)\alpha_z\alpha_{-w}\check{T}\,dw\right)\\
		&=\tr\left(S\circ\intrdd b(-w')\alpha_{w'}\alpha_{z}\check{T}\,dw'\right)\\
		&=\intrdd b(-w')\tr\left(S\alpha_{w'+z}\check{T}\right)\,dw',
	\end{align*}
	where the last equality is due, e.g., to \cite[Proposition 2.9]{Eirik-Master}. we can the rephrase the last right-side term as 
	\begin{align*}
		\intrdd b(z-w'')\tr\left(S\alpha_{w''}\check{T}\right)\,dw''&=\intrdd b(z-w'')(S\star T)(w'')\,dw''\\
		&=((S\star T)\ast b)(z).
	\end{align*}
	For the proof of \eqref{Eq-assoc-S-T-Q}, the following property of the trace is useful:
	\begin{equation*}
		\intrdd\tr(S\alpha_w T)\,dw=\tr(S)\tr(T),
	\end{equation*}
	where $S,T\in\TC$. Take now $f,g\in\FA(\rd)$:
	\begin{align*}
		\dual{(S\star(T\star Q))f}{g}&=\intrdd \tr(T\alpha_z\check{Q})\dual{\alpha_z S f}{g}\,dz\\
		&=\intrdd \tr(Q \alpha_z\check{T})\tr((\alpha_z S) (f\otimes g))\,dz\\
		&=\intrdd \intrdd \tr(Q (\alpha_z\check{T})\alpha_w((\alpha_z S)(f\otimes g)))\,dwdz\\
		&=\intrdd \intrdd \tr((f\otimes g)(\alpha_w Q)\alpha_z((\alpha_w \check{T})S))\,dzdw\\
		&=\intrdd \tr(S\alpha_w\check{T})\tr((\alpha_w Q)(f\otimes g))\,dw\\
		&=\dual{((S\star T)\star Q)f}{g}.
	\end{align*}
	Also the last identity \eqref{Eq-assoc-S-b-sigma} may be deduced by a direct computation. For $f,g\in\FA(\rd)$ we have
	\begin{align*}
		\dual{((S\star b)\star \sigma)f}{g}&=\intrdd \sigma(z)\dual{\alpha_z(S\star b)f}{g}\,dz\\
		&=\intrdd\sigma(z)\intrdd b(w)\dual{(\alpha_w S)\pi(z)^\ast f}{\pi(z)^\ast g}\,dwdz\\
		&=\intrdd\intrdd\sigma(z)b(w)\dual{(\alpha_{w+z}S)f}{g}\,dwdz\\
		&=\intrdd\intrdd\sigma(z)b(w)\tr((\alpha_{w+z}S)(f\otimes g))\,dwdz\\
		&=\intrdd b(w)\intrdd \sigma(z'-w)\tr((\alpha_{z'}S)(f\otimes g))\,dz'dw\\
		&=\intrdd (\intrdd b(w)\sigma(z'-w)\,dz')\tr((\alpha_{z'}S)(f\otimes g))\,dw\\
		&=\intrdd b\ast \sigma(z')\dual{(\alpha_{z'}S)f}{g}\,dz'\\
		&=\dual{(S\star (b\ast \sigma))f}{g}.
	\end{align*}
	This concludes the proof.
\end{proof}

\begin{corollary}
	The mappings $\FtW$ and $\tW$ defined on $\FO$ can be extended to topological isomorphisms
	\begin{equation*}
		\FtW\colon\MO\to\MD(\rdd)\qquad\text{and}\qquad\tW\colon\MO\to\MD(\rdd)
	\end{equation*} 
	by duality:
	\begin{equation}\label{Eq-Def-FtW-tW-MO}
		\dual{\FtW S}{a}\coloneqq\dualOP{S}{\tSR a},\qquad\dual{\tW S}{a}\coloneqq\dualOP{S}{\Opt a},
	\end{equation}
	where $S\in\MO$ and $a\in\FA(\rdd)$. The inverses are given by
	\begin{equation*}
		\tSR\colon\MD(\rdd)\to\MO\qquad\text{and}\qquad\Opt\colon\MD(\rdd)\to\MO,
	\end{equation*} 
	respectively.
\end{corollary}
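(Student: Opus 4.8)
The plan is to recognize the two claimed extensions as the Banach-space adjoints of the isomorphisms $\tSR\colon\FA(\rdd)\to\FO$ and $\Opt\colon\FA(\rdd)\to\FO$ already produced in Corollaries \ref{Cor-FtW-tSR-iso-FO-FA}$(i)$ and \ref{Cor-tW-Opt-iso-FO-FA}$(i)$, and then to invoke the standard fact that the (conjugate-)adjoint of a topological isomorphism of Banach spaces is again a topological isomorphism whose inverse is the adjoint of the inverse. No new estimate is needed: all the analytic content has already been established, so the argument is pure assembly via the duality \eqref{Eq-duality-FO-MO}.

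For $\FtW$, I would first observe that, since $\MO$ is the conjugate-dual of $\FO$ and $\MD(\rdd)$ the conjugate-dual of $\FA(\rdd)$, and since $\tSR\colon\FA(\rdd)\to\FO$ is linear, the assignment $a\mapsto\dualOP{S}{\tSR a}$ is conjugate-linear and bounded on $\FA(\rdd)$, hence defines an element of $\MD(\rdd)$. This is exactly the first formula in \eqref{Eq-Def-FtW-tW-MO}, so the extended $\FtW$ on $\MO$ is precisely the adjoint $(\tSR|_{\FA})^\ast$. The adjoint machinery then gives that $\FtW\colon\MO\to\MD(\rdd)$ is a topological isomorphism with inverse $((\tSR|_{\FA})^{-1})^\ast=(\FtW|_{\FO})^\ast$, and by Corollary \ref{Cor-FtW-tSR-iso-FO-FA}$(ii)$ this adjoint is exactly the map $\tSR\colon\MD(\rdd)\to\MO$, which is therefore the claimed inverse. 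The case of $\tW$ is entirely analogous: $\Opt\colon\FA(\rdd)\to\FO$ is a topological isomorphism by Corollary \ref{Cor-tW-Opt-iso-FO-FA}$(i)$, the second formula in \eqref{Eq-Def-FtW-tW-MO} identifies the extended $\tW$ on $\MO$ with $(\Opt|_{\FA})^\ast$, and its inverse $((\Opt|_{\FA})^{-1})^\ast=(\tW|_{\FO})^\ast$ is the map $\Opt\colon\MD(\rdd)\to\MO$ by Theorem \ref{Th-Opt-tW-FO-MD}.

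It then remains to check consistency, namely that these extensions agree with the original $\FtW,\tW$ on the subspace $\FO\subseteq\MO$. For $S\in\FO$ and $a\in\FA(\rdd)$, both the old value $\FtW S\in\FA(\rdd)$ and $a$ lie in the range where the pairing is conjugate-symmetric, so conjugating the identity $\dual{a}{\FtW S}=\dualOP{\tSR a}{S}$ of Corollary \ref{Cor-FtW-tSR-iso-FO-FA}$(ii)$ yields $\dual{\FtW S}{a}=\dualOP{S}{\tSR a}$, which is the defining relation \eqref{Eq-Def-FtW-tW-MO}; the same step with $\Opt$ in place of $\tSR$, together with Theorem \ref{Th-Opt-tW-FO-MD}, settles $\tW$.

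The main obstacle I anticipate is purely bookkeeping: keeping the conjugate-linear conventions of the three dual pairs $(L^2,L^2)$, $(\MD,\FA)$, $(\MO,\FO)$ straight, verifying that $a\mapsto\dualOP{S}{\tSR a}$ is genuinely conjugate-linear and bounded so that it lands in $\MD(\rdd)$ rather than in some abstract dual, and confirming that the abstract identity ``inverse of the adjoint equals adjoint of the inverse'' is applied to the correct small (on $\FA(\rdd)$, $\FO$) and large (on $\MD(\rdd)$, $\MO$) versions of $\tSR$ and $\Opt$. Once these conventions are tracked carefully, the topological-isomorphism claims follow immediately from $\|U^\ast\|=\|U\|$ applied to $U=\tSR|_{\FA}$ and $U=\Opt|_{\FA}$.
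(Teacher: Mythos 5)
Your argument is correct, and it takes a genuinely more streamlined route than the paper's own proof. The common core is the recognition that \eqref{Eq-Def-FtW-tW-MO} defines the extended maps as the Banach-space adjoints of $\tSR|_{\FA(\rdd)}$ and $\Opt|_{\FA(\rdd)}$, together with the previously established adjoint identities $\tSR=\FtW^\ast$ (Corollary \ref{Cor-FtW-tSR-iso-FO-FA}$(ii)$) and $\Opt=\tW^\ast$ (Theorem \ref{Th-Opt-tW-FO-MD}). The divergence is in how bijectivity is obtained: the paper does it by hand, deducing injectivity of the extended $\tW$ from surjectivity of $\Opt\colon\FA(\rdd)\to\FO$, proving surjectivity by approximating a given $a\in\MD(\rdd)$ in the w-$\ast$ sense by $a_n=\tW A_n$ with $A_n\in\FO$ and extracting a w-$\ast$ limit $A\in\MO$ with $\tW A=a$, and finally verifying $\tW\circ\Opt=\mathrm{id}$ on $\MD(\rdd)$ and $\Opt\circ\tW=\mathrm{id}$ on $\MO$ through two short chains of duality identities. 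You replace all of this by the standard functional-analytic fact that the adjoint of a topological isomorphism of Banach spaces is again a topological isomorphism with $(U^\ast)^{-1}=(U^{-1})^\ast$, applied to $U=\tSR|_{\FA(\rdd)}$ and $U=\Opt|_{\FA(\rdd)}$ from Corollaries \ref{Cor-FtW-tSR-iso-FO-FA}$(i)$ and \ref{Cor-tW-Opt-iso-FO-FA}$(i)$. This buys economy: the surjectivity step and its w-$\ast$ approximation argument (hence any implicit appeal to sequential w-$\ast$ density of $\FA$ in $\MD$) disappear, since bijectivity comes for free from the abstract theorem; what remains to check is exactly what you flag, namely that the abstract inverse $(U^{-1})^\ast$ coincides with the concrete maps $\tSR,\Opt$ on $\MD(\rdd)$ (which is precisely the content of Corollary \ref{Cor-FtW-tSR-iso-FO-FA}$(ii)$ and Theorem \ref{Th-Opt-tW-FO-MD}), and that the extensions restrict to the original maps on $\FO$. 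Your consistency check for the latter is legitimate: when $S\in\FO$ and $a\in\FA(\rdd)$, both pairings reduce to $L^2$ inner products (of functions in $\FA(\rdd)$, respectively of kernels in $\FA(\rdd)$, via \eqref{Eq-duality-FO-MO}), so they are conjugate-symmetric and conjugating the identity $\dual{a}{\FtW S}=\dualOP{\tSR a}{S}$ gives the defining relation \eqref{Eq-Def-FtW-tW-MO}. What the paper's hands-on route buys in exchange is self-containedness: it never invokes the abstract adjoint theorem for these conjugate-linear dualities, every identity being verified directly at the level of kernels.
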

\begin{proof}
	The definitions in \eqref{Eq-Def-FtW-tW-MO} rely on the fact that $\Opt=\tW^\ast$ and $\tSR=\FtW^\ast$, see Theorem \ref{Th-Opt-tW-FO-MD} and Corollary \ref{Cor-FtW-tSR-iso-FO-FA}. It is straightforward to see that if $S\in\MO$, then $\FtW S$ and $\tW S$ defined as in \eqref{Eq-Def-FtW-tW-MO} are in $\MD(\rdd)$. Also linearity and boundedness of $\FtW\colon\MO\to\MD(\rdd)$ and $\tW\colon\MO\to\MD(\rdd)$ are easy to verify as well as the fact that they extend  $\FtW\colon\FO\to\FA(\rdd)$ and $\tW\colon\FO\to\FA(\rdd)$.\\
	We show that $\tW$ is an isomorphisms with inverse $\Opt$, then $\FtW$ is treated in the same way. $\tW$ is injective because $\Opt\colon\FA(\rdd)\to\FO$ is an isomorphism. Fix now $a\in\MD(\rdd)$, there exists a sequence $\netn{a}\subseteq\FA(\rdd)$ such that $\wsconvMDn{a}{a}$. Since $\tW$ is an isomorphism between $\FO$ and $\FA(\rdd)$, there exists $\netn{A}\subseteq\FO$ such that $a_n=\tW A_n$. We see that there is $A\in\MO$ such that $\wsconvMOn{A}{A}$, in fact taking $b\in\FA(\rdd)$
	\begin{equation*}
		\dual{a}{b}=\limnetn\dual{\tW A_n}{b}=\limnetn\dualOP{A_n}{\Opt b}.
	\end{equation*}
	Hence $a=\tW A$, which proves that $\tW$ is onto. We show now that $\tW\circ\Opt$ is the identity on $\MD(\rdd)$, take $a\in\MD(\rdd)$ and $b\in\FA(\rdd)$:
	\begin{equation*}
		\dual{\tW\circ\Opt a}{b}=\dualOP{\Opt a}{\Opt b}=\dual{a}{\tW\circ\Opt b}=\dual{a}{b}.
	\end{equation*}
	The first identity is just \eqref{Eq-Def-FtW-tW-MO}, the second one is \eqref{Eq-Opt-adjoint-tW} and the last one is $(i)$ of Corollary \ref{Cor-tW-Opt-iso-FO-FA}. For the other direction, take $S\in\MO$ and $T\in\FO$:
	\begin{equation*}
		\dualOP{\Opt\circ\tW S}{T}=\dual{\tW S}{\tW T}=\dualOP{S}{\Opt\circ\tW T}=\dualOP{S}{T}.
	\end{equation*}
	The first identity is \eqref{Eq-Opt-adjoint-tW}, the second one is \eqref{Eq-Def-FtW-tW-MO} and the last one is $(i)$ of Corollary \ref{Cor-tW-Opt-iso-FO-FA}.
\end{proof}

	\subsection{$\tau$-Cohen's class of operators}\label{Ch-7-Subsec-tau-Cohen-class}
In the present subsection we define $\tQ{a}{S}$ and recall the definition of $\tQ{S}{f}$ from \cite{Luef1}. We shall see that $\tQ{a}{S}$ relates to well-known objects and observe that it coincides with the $\tau$-symbol of the mixed-state localization operator $a\star S$. We continue with some statements concerning the interplay between the Gabor matrix of an operator $\GM{T}$, the $\tau$-Cohen's class, the trace and the $\tau$-Wigner distribution.
\begin{definition}
	For $a\in\MD(\rdd)$ we define the $\tau$-Cohen's class distribution, with kernel $a$, of an operator $S\in\FO$ as 
	\begin{equation}\label{Eq-Def-Cohen-class-S-wrt-function}
		\tQ{a}{S}\coloneqq a\ast \tW S.
	\end{equation} 
\end{definition}

Of course, the rank-one case $f\otimes g$ reduces to the definition given in \eqref{Eq-Def-tau-Cohen-functions}.
We recall also the definition given in \cite{Luef1} of Cohen's class distribution of a function $f\in\FA(\rd)$ w.r.t. the operator $S\in\MO$ by
\begin{equation}\label{Eq-Def-Cohen-class-f-wrt-operator}
	Q_S f\coloneqq (f\otimes f)\star \check{S}.
\end{equation}
It can be easily seen that for every $z\in\rdd$
\begin{equation*}
	Q_S f(z)= (f\otimes f)\star \check{S}(z)=\la (\alpha_z S)f,f,\ra.
\end{equation*}

\begin{remark}
	If $a\in\MD(\rdd)$ and $S\in\FO$, then we see that the $\tau$-Cohen's class representation of $S$ w.r.t. $a$ is just the $\tau$-symbol of the mixed-state localization operator $a\star S$:
	\begin{equation*}
		\tsym{a\star S}=\tW(a\star S)=a\ast \tW S=\tQ{a}{S}.
	\end{equation*}
\end{remark}

\begin{lemma}
	Let $S\in\FO$ have the spectral decomposition $\sumn f_n\otimes g_n$, for $f,\f,\psi\in\FA(\rd)$ and $\seqn{h}\subseteq\FA(\rd)$ with
	\begin{equation*}
		\sumn\normFA{h_n}^2<+\infty.
	\end{equation*}. Then for every $z\in\rdd$:
	\begin{align}
		\tQ{\omtW(\check{\psi},\check{\f})}{S}(z)&=\sumn V_{\f}f_n(z)\overline{V_{\psi}g_n(z)};\\
		\tQ{\omtW(\check{\f},\check{\f})}{\sumn h_n\otimes h_n}(z)&= \sumn\abs{V_{\f}h_n(z)}^2.
	\end{align}
\end{lemma}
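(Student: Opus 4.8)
The plan is to identify the $\tau$-Cohen distribution with this Wigner kernel as a convolution of two operators in the sense of quantum harmonic analysis, and then to evaluate it on the spectral decomposition of $S$. Set $a\coloneqq\omtW(\check\psi,\check\f)=\omtW(\check\psi\otimes\check\f)$; since the parity operator preserves $\FA(\rd)$ and the cross-$(1-\tau)$-Wigner of two functions in $\FA(\rd)$ lies in $\FA(\rdd)$, we have $a\in\FA(\rdd)$ and $\tQ{a}{S}=a\ast\tW S\in\FA(\rdd)$. The first step is to prove
\begin{equation*}
\tQ{\omtW(\check\psi,\check\f)}{S}=(\check\psi\otimes\check\f)\star S .
\end{equation*}
Applying the symplectic Fourier transform $\cF_\sigma$ (an involution) and using both $\tQ{a}{S}=\tW(a\star S)$ and Lemma \ref{Lem-QHA-new-tools}$(iii)$, $(vi)$ gives $\cF_\sigma(\tQ{a}{S})=\cF_\sigma a\cdot\FtW S$. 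Lemma \ref{Lem-QHA-new-tools}$(iii)$ for the parameter $1-\tau$ identifies $\cF_\sigma a=\FomtW(\check\psi\otimes\check\f)$, and the second expression in Lemma \ref{Lem-QHA-new-tools}$(v)$, read with the operators $\check\psi\otimes\check\f$ and $S$, shows $\FomtW(\check\psi\otimes\check\f)\cdot\FtW S=\cF_\sigma\big((\check\psi\otimes\check\f)\star S\big)$. Applying $\cF_\sigma$ once more yields the displayed identity.

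Next I would evaluate the operator convolution explicitly. Werner's convolution is commutative (this also follows from Lemma \ref{Lem-QHA-new-tools}$(v)$ since pointwise products commute), and $\check{(\check\psi\otimes\check\f)}=\psi\otimes\f$ because $P^2=\mathrm{Id}$, so for every $z\in\rdd$
\begin{equation*}
(\check\psi\otimes\check\f)\star S(z)=S\star(\check\psi\otimes\check\f)(z)=\tr\big(S\,\alpha_z(\psi\otimes\f)\big)=\tr\big(S\,(\pi(z)\psi\otimes\pi(z)\f)\big),
\end{equation*}
the last equality using $\alpha_z(\psi\otimes\f)=\pi(z)\psi\otimes\pi(z)\f$ from Remark \ref{Rem-useful-identities}. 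Inserting the decomposition $S=\sumn f_n\otimes g_n$, which by Lemma \ref{Lem-representation-FO} converges in $\TC$, lets me take the trace term by term; the rank-one rules $(f_n\otimes g_n)(\pi(z)\psi\otimes\pi(z)\f)=\la\pi(z)\psi,g_n\ra\,(f_n\otimes\pi(z)\f)$ and $\tr(u\otimes v)=\la u,v\ra$ give
\begin{equation*}
\tr\big((f_n\otimes g_n)(\pi(z)\psi\otimes\pi(z)\f)\big)=\la\pi(z)\psi,g_n\ra\,\la f_n,\pi(z)\f\ra .
\end{equation*}

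Finally I would rewrite the two inner products as short-time Fourier transforms: by definition $\la f_n,\pi(z)\f\ra=V_\f f_n(z)$ and $\la\pi(z)\psi,g_n\ra=\overline{\la g_n,\pi(z)\psi\ra}=\overline{V_\psi g_n(z)}$, so each term equals $V_\f f_n(z)\overline{V_\psi g_n(z)}$ and summation yields the first claimed identity. The series converges absolutely and uniformly, since $|V_\f f_n(z)\overline{V_\psi g_n(z)}|\le\norm{\f}_{L^2}\norm{\psi}_{L^2}\norm{f_n}_{L^2}\norm{g_n}_{L^2}\lesssim\normFA{f_n}\normFA{g_n}$ with $\sumn\normFA{f_n}\normFA{g_n}<\infty$, which also legitimizes the term-by-term trace. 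The second identity is the specialization $\psi=\f$ and $f_n=g_n=h_n$, permissible because $\sumn\normFA{h_n}^2<\infty$ makes $\sumn h_n\otimes h_n$ a Feichtinger operator, turning each summand into $|V_\f h_n(z)|^2$. The only genuinely delicate point is the first step, namely matching the parameters so that the $(1-\tau)$-Wigner kernel pairs with the $\tau$-Fourier--Wigner transform of $S$ in Lemma \ref{Lem-QHA-new-tools}$(v)$; once $\tQ{a}{S}$ is rewritten as the operator convolution $(\check\psi\otimes\check\f)\star S$, the commutativity flip removes all parity operators and the rest is routine rank-one bookkeeping.
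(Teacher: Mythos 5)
Your proof is correct, and it rests on the same key mechanism as the paper's: Lemma \ref{Lem-QHA-new-tools} and the symplectic Fourier transform are used to convert the function convolution $\omtW(\check{\psi},\check{\f})\ast\tW S$ into an operator convolution against the rank-one operator $\check{\psi}\otimes\check{\f}$, with precisely the parameter matching ($(1-\tau)$-Wigner paired with $\tau$-Fourier--Wigner in item $(v)$) that you flag as the delicate point. The execution, however, differs in two genuine ways. The paper works at the rank-one level: it first proves $\tQ{a}{f,g}=(f\otimes g)\star\Opomt(a)$ for $f,g\in\FA(\rd)$, then expands $S=\sumn f_n\otimes g_n$ term by term, identifies $\Opomt(\omtW(\check{\psi},\check{\f}))=\check{\psi}\otimes\check{\f}$, and finally cites \cite{Luef1} for the evaluation $(f_n\otimes g_n)\star(\check{\psi}\otimes\check{\f})(z)=V_{\f}f_n(z)\overline{V_{\psi}g_n(z)}$. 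You instead establish $\tQ{a}{S}=(\check{\psi}\otimes\check{\f})\star S$ globally for the whole operator $S$ (bypassing $\Opomt$ altogether via $\cF_\sigma a=\FomtW(\check{\psi}\otimes\check{\f})$), and then evaluate the operator convolution from first principles: commutativity of $\star$, the parity identity $P(\check{\psi}\otimes\check{\f})P=\psi\otimes\f$, the covariance relation $\alpha_z(\psi\otimes\f)=\pi(z)\psi\otimes\pi(z)\f$, and rank-one trace algebra. What your route buys is self-containedness --- you in effect reprove the rank-one convolution formula that the paper imports from \cite{Luef1} --- together with an explicit justification of the interchange of trace and sum via the uniform bound $\abs{V_{\f}f_n(z)}\,\abs{V_{\psi}g_n(z)}\lesssim\normFA{f_n}\normFA{g_n}$, a convergence issue the paper leaves implicit. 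What the paper's route buys is brevity and the intermediate identity $\tQ{a}{f,g}=(f\otimes g)\star\Opomt(a)$, which is of independent interest as a function-level statement about Cohen class representations with Wigner kernels.
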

\begin{proof}
	Clearly, it suffices to prove the first identity. We show first that for $f,g\in\FA(\rd)$
	\begin{equation}
		\tQ{a}{f,g}=(f\otimes g)\star\Opomt(a).
	\end{equation}
	In fact, applying $\cF_{\sigma}$ to the right-hand side first we get
	\begin{equation*}
		\cF_{\sigma}((f\otimes g)\star\Opomt(a))=\FtW(f\otimes g)\cdot\FomtW\Opomt(a)=\tV_g f\cdot\cF_{\sigma}a.
	\end{equation*}
	We apply $\cF_{\sigma}$ a second time:
	\begin{equation*}
		(f\otimes g)\star\Opomt(a)=\cF_{\sigma}\tV_g f\ast\cF_{\sigma}\cF_{\sigma}a=\tW(f,g)\ast a.
	\end{equation*}
	We can now proceed as follows:
	\begin{align*}
		\tQ{\omtW(\check{\psi},\check{\f})}{S}&=\omtW(\check{\psi},\check{\f})\ast \tW(\sumn f_n\otimes g_n)=\sumn \omtW(\check{\psi},\check{\f})\ast\tW(f_n,g_n)\\
		&=\sumn (f_n\otimes g_n)\star \Opomt(\omtW(\check{\psi},\check{\f}))=\sumn(f_n\otimes g_n)\star(\check{\psi}\otimes\check{\f})\\
		&=\sumn V_{\f}f_n(z)\overline{V_{\psi}g_n(z)},
	\end{align*}
	where the last equality is due to \cite{Luef1}.
\end{proof}

We call a bounded operator $T$ on $L^2(\rd)$ positive, denoted by $T\geq 0$, if
\begin{equation*}
	\la Tf ,f\ra\geq 0,\qquad\forall\,f\in L^2(\rd).
\end{equation*}
An operator $T\in\TC$ and  $T\geq0$ is also called a state in quantum mechanics.\\
Let us take $T\in\MO$ and $\f\in\Sch$, then the Gabor matrix of $T$ (w.r.t. $\f$) is defined as
\begin{equation}
	\GM{T}(z,w)\coloneqq \la T\pi(w)\f,\pi(z)\f\ra,\qquad z=(x,\om),w=(u,v)\in\rdd.
\end{equation}
We notice that the Gabor matrix of an operator does not depend on $\tau$, in the sense that 
\begin{equation*}
	\GM{T}(z,w)= \la T\pi(w)\f,\pi(z)\f\ra= \la T\tTFS(w)\f,\tTFS(z)\f\ra,\qquad\forall\,\tau\in[0,1].
\end{equation*}

\begin{remark}\label{Rem-GaborMatrix-CohensClass}
	We point out that the diagonal of the Gabor matrix of $T$, w.r.t. $\f$, is the Cohen's class representation of $\f$ w.r.t. $T$ up to a reflection:
	\begin{equation}
		\GM{T}(-z,-z)=Q_T\f(z).
	\end{equation}
	In fact
	\begin{align*}
		\GM{T}(-z,-z)&=\la T\pi(-z)\f,\pi(-z)\f\ra=\la T\pi(z)^\ast\f,\pi(z)^\ast\f\ra\\
		&=\la (\alpha_z T)\f,\f,\ra=Q_T \f(z).
	\end{align*}
\end{remark}

Let $F$ and $H$ be functions of $(z,w)\in\bR^{4d}$ and let $\twistmatrix$ be a real $4d\times 4d$ matrix. Then the twisted convolution induced by $\twistmatrix$ is defined as 
\begin{equation}
	F\twistconv H(z,w)\coloneqq\intrdd \intrdd F(z',w')H(z-z',w-w')e^{2\pi i (z,w)\twistmatrix(z',w')}\,dz'dw'.
\end{equation}
%
%

\begin{lemma}\label{Lem-H.R.-Lem-2.7}
	Let $T,S\in \TC$, $T,S\geq 0$. Then for every $\tau\in[0,1]$ we have
	\begin{equation}
		\tr(TS)=\intrdd \tW T(z)\overline{\tW S(z)}\,dz.
	\end{equation}
\end{lemma}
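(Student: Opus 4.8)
The plan is to recognize the right-hand side as an $L^2$ inner product and transport it through the chain of isomorphisms in Lemma \ref{Lem-QHA-new-tools}. First I would note that by Theorem \ref{Th-Opt-tW-TC-BO} both $\tW T$ and $\tW S$ lie in $L^2(\rdd)$, so that $\intrdd \tW T(z)\overline{\tW S(z)}\,dz=\langle \tW T,\tW S\rangle_{L^2(\rdd)}$ is well defined. Using the identity $\tW=\cF_\sigma\FtW$ from Lemma \ref{Lem-QHA-new-tools}$(iii)$ together with the unitarity of the symplectic Fourier transform on $L^2(\rdd)$, this equals $\langle \FtW T,\FtW S\rangle_{L^2(\rdd)}$.

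Next I would establish the operator Moyal identity $\langle \FtW T,\FtW S\rangle_{L^2}=\tr(TS^\ast)$ for arbitrary $T,S\in\TC$. On rank-one operators $T=f_1\otimes f_2$, $S=g_1\otimes g_2$ this reduces, by Lemma \ref{Lem-QHA-new-tools}$(ii)$, to $\intrdd \tV_{f_2}f_1(z)\,\overline{\tV_{g_2}g_1(z)}\,dz$; since $\lvert\tV_g f\rvert=\lvert V_g f\rvert$ the phase $e^{2\pi i\tau x\om}$ cancels in the product, and the standard Moyal orthogonality identity for the STFT yields $\langle f_1,g_1\rangle\overline{\langle f_2,g_2\rangle}$, which a direct computation matches with $\tr\big((f_1\otimes f_2)(g_2\otimes g_1)\big)=\tr(TS^\ast)$. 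Both sides being sesquilinear and continuous, I would then extend to general trace-class $T,S$ by inserting their spectral decompositions $T=\sumn\lambda_n\,e_n\otimes e_n$, $S=\sum_m\mu_m\,\phi_m\otimes\phi_m$ and interchanging the double sum with the integral; the interchange is legitimate because $\tW T=\sumn\lambda_n\tW(e_n,e_n)$ converges in $L^2$ by the continuity in Theorem \ref{Th-Opt-tW-TC-BO}, and the resulting double series is absolutely summable via $\sum_{n,m}\lambda_n\mu_m\lvert\langle e_n,\phi_m\rangle\rvert^2\leq\norm{S}_{B(L^2(\rd))}\,\tr(T)<\infty$.

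Finally, since $T,S\geq 0$ are self-adjoint, $S^\ast=S$, whence $\tr(TS^\ast)=\tr(TS)$ and the claimed equality follows. In particular the right-hand side is independent of $\tau$, which is consistent with the cancellation of the $\tau$-phase already visible at the level of $\FtW$ through Lemma \ref{Lem-QHA-new-tools}$(iv)$; indeed the same cancellation shows $\langle\FtW T,\FtW S\rangle=\langle\FohW T,\FohW S\rangle$, reducing everything to the Weyl case if one prefers.

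The main obstacle I anticipate is the careful bookkeeping of complex conjugates: the naive rank-one computation produces $\tr(TS^\ast)$ rather than $\tr(TS)$, and it is precisely the self-adjointness coming from positivity that closes this gap. A secondary technical point is justifying the term-by-term passage to the limit in the spectral expansion, for which the absolute-summability estimate above, combined with the boundedness of $\tW$ on $\TC$, suffices.
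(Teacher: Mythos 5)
Your proof is correct, and it reaches the identity by a somewhat different route than the paper. The paper's argument is a direct double-sum computation: it writes $T=\sumn\lambda_n f_n\otimes f_n$ and $S=\sumn\mu_n g_n\otimes g_n$ (possible since both are positive), expands $\tr(TS)$ in an orthonormal basis to get $\sum_{i,j}\lambda_j\mu_i\abs{\la f_j,g_i\ra}^2$, and matches this term by term with the expansion of $\intrdd\tW T\,\overline{\tW S}\,dz$ using Moyal's identity for $\tau$-Wigner distributions of functions. You instead pass to the Fourier--Wigner side via unitarity of $\cF_\sigma$, prove the sesquilinear identity $\la\FtW T,\FtW S\ra_{L^2}=\tr(TS^\ast)$ for \emph{arbitrary} trace-class operators (rank-one case by phase cancellation plus the STFT Moyal identity, then extension by sesquilinearity, density and continuity), and only invoke positivity at the very end to replace $S^\ast$ by $S$. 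What your organization buys is a clean separation of concerns: it makes explicit that positivity plays no role in the integral-trace identity itself, only in the cosmetic step $\tr(TS^\ast)=\tr(TS)$; in fact your ``operator Moyal identity'' is precisely the paper's own key observation \eqref{Eq-Rem-key-observation} (equivalently $\Opt=\tW^\ast$ in Theorem \ref{Th-Opt-tW-TC-BO}) specialized to $a=\tW T$, using $\Opt(\tW T)=T$ from \eqref{Eq-Rem-tsym}, so you could have shortened the argument by citing it. The paper's computation is more self-contained and elementary, but leaves positivity entangled in the calculation. One small wording caveat: for genuinely general (non-self-adjoint) trace-class operators the diagonal decomposition $\sumn\lambda_n e_n\otimes e_n$ you write down is not available -- one needs the singular value decomposition $\sumn\lambda_n e_n\otimes h_n$ -- but this does not affect your argument, since the density-plus-continuity extension works verbatim, and the operators in the lemma are positive anyway.
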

\begin{proof}
	Since $T$ and $S$ are trace-class and positive, they can be described as 
	\begin{equation*}
		T=\sumn\lambda_n f_n\otimes f_n,\qquad S=\sumn \mu_n g_n\otimes g_n
	\end{equation*}
	for some orthonormal sets $\seqn{f}$ and $\seqn{g}$ in $L^2$ and $\lambda_n,\mu_n\geq 0$. Let $\seqn{e}$ be an o.n.b. for $L^2(\rd)$:
	\begin{equation*}
		\tr(TS)=\sumn\la TS e_n,e_n\ra=\sum_{i,j}^\infty \lambda_j \mu_i \abs{\la f_j,g_i\ra}^2. 
	\end{equation*}
	On the other hand, 
	\begin{equation*}
		\intrdd \tW T(z)\overline{\tW S(z)}\,dz=\sum_{i,j}^\infty\lambda_j\mu_i\intrdd\tW f_j(z)\overline{\tW g_i(z)}\,dz=\sum_{i,j}^\infty \lambda_j \mu_i \abs{\la f_j,g_i\ra}^2,
	\end{equation*}
	where the last equality is due to Moyal's identity. This concludes the proof.
\end{proof}

\begin{remark}
	Since we assume $S\geq 0$, $S$ is self-adjoint and for $\tau=1/2$ we have that $\ohW S$ is real-valued. In fact, using the representation given in the proof of Lemma \ref{Lem-H.R.-Lem-2.7}:
	\begin{equation*}
		\ohW S=\sumn \mu_n\ohW g_n
	\end{equation*}
	with every $\ohW g_n$ real-valued and $\mu_n\geq 0$. Hence, for $\tau=1/2$ we recover \cite[Lemma 2.7]{HerRei2022}.
\end{remark}

\begin{lemma}\label{Lem-H.R.-Lem-2.9}
	Let $T\in\TC$ and consider $\f\in\Sch(\rd)$ such that $\norm{\f}_{L^2}=1$. Then
	\begin{equation}
		\tr T=\intrdd \la (\alpha_z T)\f,\f\ra\,dz=\intrdd Q_T\f(z)\,dz=\intrdd \GM{T}(z,z)\,dz.
	\end{equation}
\end{lemma}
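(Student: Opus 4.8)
The plan is to peel off the two easy equalities in the displayed chain and reduce everything to the single substantive identity
\[
\tr T=\intrdd \la (\alpha_z T)\f,\f\ra\,dz,
\]
which I would then establish by a singular value decomposition, in the same spirit as Lemma~\ref{Lem-H.R.-Lem-2.7}. The equality $\intrdd \la (\alpha_z T)\f,\f\ra\,dz=\intrdd Q_T\f(z)\,dz$ is immediate from the relation $Q_T\f(z)=\la(\alpha_z T)\f,\f\ra$ recorded just after \eqref{Eq-Def-Cohen-class-f-wrt-operator}. For the last term, Remark~\ref{Rem-GaborMatrix-CohensClass} gives $\GM{T}(-z,-z)=Q_T\f(z)$, so the substitution $z\mapsto -z$, under which Lebesgue measure is invariant, yields $\intrdd \GM{T}(z,z)\,dz=\intrdd \GM{T}(-z,-z)\,dz=\intrdd Q_T\f(z)\,dz$. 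Thus only the first identity requires genuine work.

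For the core identity I would write the singular value decomposition $T=\sumn s_n\, u_n\otimes v_n$, with $\seqn{u},\seqn{v}$ orthonormal in $L^2(\rd)$, $s_n\ge0$ and $\sumn s_n=\norm{T}_{\TC}<\infty$, so that $\tr T=\sumn s_n\la u_n,v_n\ra$. Using $\alpha_z(u_n\otimes v_n)=\pi(z)u_n\otimes\pi(z)v_n$ from Remark~\ref{Rem-useful-identities} together with the definition of a rank-one operator, the integrand unfolds as
\[
\la (\alpha_z T)\f,\f\ra=\sumn s_n\,\la\f,\pi(z)v_n\ra\,\overline{\la\f,\pi(z)u_n\ra}=\sumn s_n\,V_{v_n}\f(z)\,\overline{V_{u_n}\f(z)}.
\]

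It then remains to integrate term by term. The interchange of sum and integral is justified by absolute convergence: Cauchy--Schwarz on $L^2(\rdd)$ together with the orthogonality relation $\norm{V_g\f}_{L^2(\rdd)}=\norm{\f}_{L^2}\norm{g}_{L^2}$ bound $\sumn s_n\intrdd \abs{V_{v_n}\f(z)}\,\abs{V_{u_n}\f(z)}\,dz$ by $\sumn s_n\norm{\f}_{L^2}^2=\sumn s_n<\infty$, since $\norm{u_n}_{L^2}=\norm{v_n}_{L^2}=1$. Applying Moyal's identity to each term gives $\intrdd V_{v_n}\f(z)\overline{V_{u_n}\f(z)}\,dz=\norm{\f}_{L^2}^2\,\overline{\la v_n,u_n\ra}=\la u_n,v_n\ra$, where the normalization $\norm{\f}_{L^2}=1$ is used in an essential way. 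Summing over $n$ yields $\sumn s_n\la u_n,v_n\ra=\tr T$, completing the argument.

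The only genuinely delicate point in this route is the Fubini/interchange step, which is why I would single out the absolute-convergence estimate above; note also that one must use the full singular value decomposition rather than the spectral theorem, since $T$ is not assumed positive here. Alternatively, one can bypass the decomposition altogether by invoking the quantum-harmonic-analysis identity $\intrdd\tr(A\alpha_z B)\,dz=\tr(A)\tr(B)$ (already used in the proof of associativity) with $A=\f\otimes\f$ and $B=T$: since $Q_T\f=(\f\otimes\f)\star\check{T}$, $\check{\check{T}}=T$, and $\tr(\f\otimes\f)=\norm{\f}_{L^2}^2=1$, this reproduces $\intrdd Q_T\f(z)\,dz=\tr T$ in a single line, and may in fact be the cleaner presentation.
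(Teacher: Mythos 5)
Your proposal is correct and follows essentially the same route as the paper's proof, which likewise decomposes $T$ into rank-one operators and integrates term by term using Moyal's identity $\la f_j,g_i\ra=\la V_\f f_j,V_\f g_i\ra$. In fact your write-up is more careful than the paper's one-line sketch: you rightly observe that, since $T$ is not assumed positive here, one needs the singular value decomposition rather than the spectral decompositions of Lemma~\ref{Lem-H.R.-Lem-2.7} that the paper cites, and you supply the absolute-convergence estimate justifying the interchange of sum and integral.
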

\begin{proof}
	The proof follows from a direct computation using the representations presented in the proof of Lemma \ref{Lem-H.R.-Lem-2.7} and Moyal's identity involving the function $\f$:
	\begin{equation*}
		\la f_j,g_i\ra=\la V_{\f}f_j,V_{\f}g_i\ra,
	\end{equation*}
	we leave details to the interested reader.
\end{proof}

\begin{lemma}\label{Lem-H.R.-Lem-2.10}
	Let $T\in\TC$, $T\geq 0$ and let $\f\in\Sch(\rd)$ such that $\norm{\f}_{L^2}=1$. Then for every $z\in\rdd$:
	\begin{equation}
		Q_T\f(z)=\intrdd\tW T(w)\overline{\tW \f(z+w)}\,dw=\tW T\ast (\tW\f)^\ast(z),
	\end{equation}
	where $( \tW\f)^\ast(w)=\overline{\tW\f(-w)}$.
\end{lemma}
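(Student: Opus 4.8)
The plan is to reduce the identity to the operator trace formula of Lemma~\ref{Lem-H.R.-Lem-2.7}, by reading $Q_T\f(z)$ as a trace of a product of two \emph{positive} trace-class operators. First I would rewrite the left-hand side: from \eqref{Eq-Def-Cohen-class-f-wrt-operator} and the identity recorded just after it, $Q_T\f(z)=(\f\otimes\f)\star\check{T}(z)=\la(\alpha_z T)\f,\f\ra$, and since $\tr\big(A(\f\otimes\f)\big)=\la A\f,\f\ra$ for any $A$ this becomes
\begin{equation*}
	Q_T\f(z)=\tr\big((\alpha_z T)(\f\otimes\f)\big).
\end{equation*}
Here $\f\otimes\f$ is a positive trace-class operator because $\la(\f\otimes\f)\psi,\psi\ra=\abs{\la\psi,\f\ra}^2\geq0$, and $\alpha_z T=\pi(z)T\pi(z)^\ast$ is positive and trace-class because $T\geq0$ is trace-class and $\pi(z)$ is unitary. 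This is precisely where the hypothesis $T\geq0$ enters: it is needed to legitimately invoke Lemma~\ref{Lem-H.R.-Lem-2.7}, which requires \emph{both} factors positive.

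Applying Lemma~\ref{Lem-H.R.-Lem-2.7} with first operator $\alpha_z T$ and second operator $\f\otimes\f$ then gives
\begin{equation*}
	Q_T\f(z)=\intrdd \tW(\alpha_z T)(w)\,\overline{\tW(\f\otimes\f)(w)}\,dw.
\end{equation*}
At this point I would use two facts. First, the rank-one interpretation of the $\tau$-Wigner distribution yields $\tW(\f\otimes\f)=\tW\f$. Second, $\tW$ is covariant under $\alpha_z$, namely $\tW(\alpha_z T)(w)=\tW T(w-z)$; this follows directly by inserting the kernel formula \eqref{Eq-ker-alpha-z-S} into the defining integral \eqref{Eq-Def-tauWigner-operators}, where the translation of the kernel produces the position shift and the phase $e^{2\pi i (y-u)\om}$ produces the frequency shift, so that both combine into the single shift by $z$. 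A change of variables $w\mapsto w+z$ then transforms the integral into the stated form
\begin{equation*}
	Q_T\f(z)=\intrdd \tW T(w)\,\overline{\tW\f(w+z)}\,dw.
\end{equation*}

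It remains to recognise this integral as the convolution on the right, which I would do by unwinding the definition $(\tW\f)^\ast(w)=\overline{\tW\f(-w)}$ so that the cross-correlation of $\tW T$ against $\tW\f$ is rewritten as a convolution of $\tW T$ with $(\tW\f)^\ast$. The step I expect to demand the most care is exactly this final bookkeeping: one must keep track simultaneously of the sign of the shift coming from $\tW(\alpha_z T)$ and of the reflection hidden in the involution $(\cdot)^\ast$, and verify that they combine to produce the correct argument of $\tW\f$; the conventions fixed in \eqref{Eq-ker-alpha-z-S} and in the definition of $(\tW\f)^\ast$ must be tracked consistently throughout. As a cross-check (and an alternative route avoiding Lemma~\ref{Lem-H.R.-Lem-2.7}), one may instead expand $T=\sumn\mu_n g_n\otimes g_n$ with $\mu_n\geq0$ as in the proof of Lemma~\ref{Lem-H.R.-Lem-2.7}, so that $Q_T\f(z)=\sumn\mu_n\abs{V_{g_n}\f(z)}^2$, apply the scalar spectrogram--Wigner identity term by term, and sum; the nonnegativity of the $\mu_n$ together with the trace-class assumption furnishes the absolute convergence needed to interchange the sum with the integral.
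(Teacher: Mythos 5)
Your treatment of the first equality is correct and is essentially the paper's own argument: the paper writes $Q_T\f(z)=\tr\bigl(T(\pi(z)^\ast\f\otimes\pi(z)^\ast\f)\bigr)$ and applies Lemma \ref{Lem-H.R.-Lem-2.7} together with the covariance property $\tW(\pi(z)^\ast\f\otimes\pi(z)^\ast\f)(w)=\tW\f(z+w)$, whereas you keep the translation on $T$, apply the same lemma to the pair $\alpha_z T$, $\f\otimes\f$, and use $\tW(\alpha_z T)(w)=\tW T(w-z)$; the two variants are related by unitary invariance of the trace and are equally valid, and your spectral-decomposition cross-check is fine as well.

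The genuine problem is the step you deferred as ``final bookkeeping'': it does not close, and no amount of care in tracking signs will make it close. With the standard convolution and the stated involution,
\begin{equation*}
	\bigl(\tW T\ast(\tW\f)^\ast\bigr)(z)=\intrdd \tW T(w)\,\overline{\tW\f\bigl(-(z-w)\bigr)}\,dw=\intrdd \tW T(w)\,\overline{\tW\f(w-z)}\,dw,
\end{equation*}
while the integral you (and the paper) established equals $\intrdd\tW T(w)\overline{\tW\f(w+z)}\,dw$. By the first equality these two expressions are $Q_T\f(-z)$ and $Q_T\f(z)$ respectively, and $Q_T\f$ is not an even function in general: for $T=\psi\otimes\psi$ one gets the spectrogram $Q_T\f(z)=\abs{V_\psi\f(z)}^2$, which is not even (take $\f$ a centered Gaussian and $\psi$ a time-frequency shifted Gaussian). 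So the convolution identity as printed is off by a reflection; correct versions are $Q_T\f(z)=\bigl(\tW T\ast(\tW\f)^\ast\bigr)(-z)$, or, moving the involution to the other factor and using that $Q_T\f$ is real (which follows from $T\geq 0$), $Q_T\f=\tW\f\ast(\tW T)^\ast$. Note that the paper's own proof stops after the first equality and never verifies the convolution form, so this defect is inherited from the statement rather than being specific to your argument; but your announced plan --- ``rewrite the cross-correlation of $\tW T$ against $\tW\f$ as a convolution of $\tW T$ with $(\tW\f)^\ast$'' --- cannot be carried out as stated, and a complete proof must either correct the sign or restate the identity.
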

\begin{proof}
	We compute directly
	\begin{align*}
		Q_T\f(z)&=\la\pi(z)T\pi(z)^\ast\f,\f\ra=\tr(T(\pi(z)^\ast\f\otimes\pi(z)^\ast\f))\\
		&=\intrdd\tW T(w)\overline{\tW(\pi(z)^\ast\f\otimes\pi(z)^\ast\f)(w)}\,dw,
	\end{align*}
	the last equation holds because of Lemma \ref{Lem-H.R.-Lem-2.7}. An elementary calculation gives
	\begin{equation*}
		\tW(\pi(z)^\ast\f\otimes\pi(z)^\ast\f)(w)=\tW \f(z+w),
	\end{equation*}
	which is also known as covariance property and this concludes the proof.
\end{proof}

\begin{lemma}\label{Lem-H.R.-Lem-3.1}
	Let $T\in\TC$, $T\geq 0$ and consider $\f\in\Sch(\rd)$ such that $\norm{\f}_{L^2}=1$. Then for every $z,w\in\rdd$:
	\begin{equation*}
		\abs{\GM{T}(z,w)}^2\leq Q_T\f(-z) Q_T\f (-w).
	\end{equation*}
\end{lemma}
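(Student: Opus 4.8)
The plan is to recognize the asserted inequality as nothing more than the Cauchy--Schwarz estimate for the positive sesquilinear form naturally attached to the operator $T$. Indeed, the Gabor matrix entry $\GM{T}(z,w)=\la T\pi(w)\f,\pi(z)\f\ra$ is precisely the value of the form $B(u,v)\coloneqq\la Tu,v\ra$ at the pair $u=\pi(w)\f$, $v=\pi(z)\f$, so the whole statement reduces to a property of $B$ followed by the translation dictionary of Remark \ref{Rem-GaborMatrix-CohensClass}.

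First I would record that $T\geq 0$ forces $T$ to be self-adjoint, whence $B$ is a Hermitian, positive semidefinite form on $L^2(\rd)$: one has $\overline{B(v,u)}=\la u,Tv\ra=\la Tu,v\ra=B(u,v)$ and $B(u,u)=\la Tu,u\ra\geq 0$. To obtain the Cauchy--Schwarz inequality $\abs{B(u,v)}^2\leq B(u,u)B(v,v)$ without worrying about degeneracy, I would route the argument through the positive square root: since $T^{1/2}$ is self-adjoint we have $B(u,v)=\la T^{1/2}u,T^{1/2}v\ra$, so the ordinary Cauchy--Schwarz inequality in $L^2(\rd)$ gives
\begin{equation*}
	\abs{B(u,v)}^2=\abs{\la T^{1/2}u,T^{1/2}v\ra}^2\leq\norm{T^{1/2}u}_{L^2}^2\norm{T^{1/2}v}_{L^2}^2=B(u,u)B(v,v).
\end{equation*}

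Specializing to $u=\pi(w)\f$ and $v=\pi(z)\f$ yields
\begin{equation*}
	\abs{\GM{T}(z,w)}^2\leq \GM{T}(z,z)\,\GM{T}(w,w),
\end{equation*}
and it then remains only to reinterpret the diagonal entries as Cohen's class distributions. By Remark \ref{Rem-GaborMatrix-CohensClass} we have $\GM{T}(-z,-z)=Q_T\f(z)$, hence $\GM{T}(z,z)=Q_T\f(-z)$ and likewise $\GM{T}(w,w)=Q_T\f(-w)$, which turns the previous display into the claimed bound $\abs{\GM{T}(z,w)}^2\leq Q_T\f(-z)\,Q_T\f(-w)$.

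There is no genuine obstacle here; the entire content is Cauchy--Schwarz for a positive operator combined with the identity of Remark \ref{Rem-GaborMatrix-CohensClass}. The only point deserving a word of care is the possible degeneracy of $B$ when $T$ has nontrivial kernel, which is exactly why I prefer to factor through $T^{1/2}$ rather than invoke the semidefinite Cauchy--Schwarz inequality directly. I would also note, as a minor remark, that the normalization $\norm{\f}_{L^2}=1$ plays no role in this particular estimate (it is present only for consistency with the neighbouring lemmas), since both the Cauchy--Schwarz step and the reflection identity of Remark \ref{Rem-GaborMatrix-CohensClass} hold for any $\f\in\Sch(\rd)$.
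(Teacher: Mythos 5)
Your proof is correct and follows essentially the same route as the paper: Cauchy--Schwarz for the positive semidefinite form $\la Tu,v\ra$ evaluated at $u=\pi(w)\f$, $v=\pi(z)\f$, combined with the reflection identity of Remark \ref{Rem-GaborMatrix-CohensClass} to rewrite the diagonal entries as $Q_T\f(-z)$ and $Q_T\f(-w)$. Your additional care in factoring through $T^{1/2}$, and the observation that $\norm{\f}_{L^2}=1$ is not needed, are fine refinements but do not change the argument.
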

\begin{proof}
	The claim follows from the Cauchy-Schwarz inequality for the inner product induced by the positive operator $T$ and Remark \ref{Rem-GaborMatrix-CohensClass}.
\end{proof}


\begin{lemma}\label{Lem-H.R.-Lem-3.3}
	Let $0_d$ and $I_d$ denote the zero and identity $d\times d$ matrices, respectively. Let us define 
	\begin{equation*}
		\twistmatrix\coloneqq
		\begin{bmatrix}
			0_d&0_d&0_d&0_d\\
			I_d&0_d&0_d&0_d\\
			0_d&0_d&0_d&0_d\\
			0_d&0_d&-I_d&0_d
		\end{bmatrix}.
	\end{equation*}
	Let $T\in\TC$ and consider $\f\in\Sch(\rd)$ such that $\norm{\f}_{L^2}=1$. For $z=(x,\om),w=(u,v)\in\rdd$ we have
	\begin{align}
		\GM{T}(z,w)&=\GM{T}\twistconv(\GM{\f\otimes\f})^\ast(z,w)\\
		&=\intrdd\intrdd\GM{T}(z',w')(\GM{\f\otimes\f})^\ast(z-z',w-w')e^{2\pi i (\om x'-u' v)}\,dz'dw',\notag
	\end{align}
	where $z'=(x',\om'),w'=(u',v')\in\rdd$.
\end{lemma}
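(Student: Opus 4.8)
The plan is to use that both sides are linear in $T$ and to reduce the statement to rank-one operators, where the Gabor matrix factors as a tensor product of short-time Fourier transforms. Since $T\in\TC$, I would start from a spectral decomposition $T=\sumn\lambda_n\, e_n\otimes h_n$ with $\seqn{e},\seqn{h}$ orthonormal in $L^2(\rd)$ and $\sumn\abs{\lambda_n}<\infty$, exactly as in the proof of Lemma \ref{Lem-H.R.-Lem-2.7}. Because $\abs{\GM{T}(z,w)}\le\norm{T}_{B(L^2(\rd))}$ (here $\norm{\f}_{L^2}=1$) and the window is Schwartz, so that the entries $\GM{\f\otimes\f}=V_\f\f\otimes\overline{V_\f\f}$ decay rapidly on $\bR^{4d}$, the twisted convolution $\GM{T}\twistconv(\GM{\f\otimes\f})^\ast$ converges absolutely; dominated convergence then lets me interchange the spectral series with the double integral. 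Hence it suffices to prove the identity for a single rank-one operator $T=f\otimes g$.

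For such an operator a direct computation from the definitions gives
\[
\GM{f\otimes g}(z,w)=\la (f\otimes g)\pi(w)\f,\pi(z)\f\ra=V_\f f(z)\,\overline{V_\f g(w)},
\]
and in particular $(\GM{\f\otimes\f})^\ast(z-z',w-w')=\overline{V_\f\f(z'-z)}\,V_\f\f(w'-w)$. The decisive structural observation is that $\twistmatrix$ is block-diagonal for the splitting into the $z$- and $w$-variables: the exponent $(z,w)\twistmatrix(z',w')=\om x'-u'v$ is the sum of a term $\om x'$ depending only on $(z,z')$ and a term $-u'v$ depending only on $(w,w')$. Together with the tensor-product form of the two Gabor matrices this makes the double integral factor as
\[
\Big(\intrdd V_\f f(z')\,\overline{V_\f\f(z'-z)}\,e^{2\pi i\om x'}\,dz'\Big)\cdot\Big(\intrdd \overline{V_\f g(w')}\,V_\f\f(w'-w)\,e^{-2\pi i u'v}\,dw'\Big),
\]
so the task reduces to showing that the first factor equals $V_\f f(z)$ and the second equals $\overline{V_\f g(w)}$.

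Each of these two factors I would identify with the inversion (reproducing) formula for the short-time Fourier transform, which is at our disposal precisely because $\norm{\f}_{L^2}=1$: the resolution of the identity $\Id=\intrdd\pi(z')\f\otimes\pi(z')\f\,dz'$ yields $V_\f f(z)=\intrdd V_\f f(z')\la\pi(z')\f,\pi(z)\f\ra\,dz'$, and the conjugate identity handles the $w$-integral. To make the integrands coincide one rewrites the reproducing kernel $\la\pi(z')\f,\pi(z)\f\ra$ as a single time-frequency shift of $\f$ tested against $\f$, using the composition relations for the shifts recorded in Section \ref{Sec-Preliminaries}; one then checks that the phase produced by this manipulation agrees with the prescribed twist factor $e^{2\pi i\om x'}$ (and analogously $e^{-2\pi i u'v}$ in the second variable). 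Collapsing the two integrals gives $V_\f f(z)\overline{V_\f g(w)}=\GM{f\otimes g}(z,w)$, and resumming the spectral series yields the claim for general $T$.

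The main obstacle is exactly this last phase computation: one must verify that the factor $e^{2\pi i(z,w)\twistmatrix(z',w')}$ coming from the matrix $\twistmatrix$ matches, after the commutation relations of the time-frequency shifts are applied, the phase of the genuine reproducing kernel, so that the inversion formula may be invoked separately in the $z$- and in the $w$-variable; this is where the specific shape of $\twistmatrix$ is used in an essential way. The remaining ingredients---the reduction to rank one, the factorisation of the twisted convolution, and the interchange of summation and integration---are routine once $\f$ is taken Schwartz and normalised.
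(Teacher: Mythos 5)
Your reduction to rank one (with the dominated-convergence justification) and the factorization of the double integral are correct, and they do constitute a genuinely different route from the paper: the paper never decomposes $T$, but instead applies Moyal's identity twice directly to $\la T\pi(w)\f,\pi(z)\f\ra$, obtaining
\begin{equation*}
	\GM{T}(z,w)=\intrdd\intrdd\GM{T}(z',w')\,\la\pi(w)\f,\pi(w')\f\ra\,\la\pi(z')\f,\pi(z)\f\ra\,dz'dw',
\end{equation*}
and then identifies the product of the two reproducing kernels with the twisted-convolution kernel by a direct phase computation. Your plan replaces that single kernel identification by two $2d$-dimensional reproducing identities, which in principle is a reasonable way to organize the same calculation.

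However, the step you flag as ``the main obstacle'' and then treat as routine is precisely where your argument fails. The genuine reproducing kernel is, by the composition rules of Section \ref{Sec-Preliminaries},
\begin{equation*}
	\la\pi(z')\f,\pi(z)\f\ra=\la\pi(z)^\ast\pi(z')\f,\f\ra=e^{2\pi i x(\om'-\om)}\,\overline{V_\f\f(z'-z)},
\end{equation*}
i.e. the phase that must multiply $\overline{V_\f\f(z'-z)}$ is $e^{2\pi i x(\om'-\om)}$, coupling the \emph{unprimed position} with the \emph{primed frequency}. The twist the lemma supplies is $e^{2\pi i\om x'}$, the transposed coupling, and the two differ by the commutator phase $e^{2\pi i(x'\om-x\om')}$: indeed $\overline{V_\f\f(z'-z)}\,e^{2\pi i\om x'}=e^{2\pi ix\om}\la\pi(z')\pi(z)^\ast\f,\f\ra$, and $\pi(z')\pi(z)^\ast=e^{2\pi i(x'\om-x\om')}\pi(z)^\ast\pi(z')$. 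This extra oscillation in $z'$ does not reproduce; by Moyal's orthogonality relation for the window pair $(\f,\pi(z)^\ast\f)$ it \emph{projects}:
\begin{equation*}
	\intrdd V_\f f(z')\,\overline{V_\f\f(z'-z)}\,e^{2\pi i\om x'}\,dz'
	=e^{2\pi ix\om}\intrdd V_\f f(z')\,\la\pi(z')\bigl[\pi(z)^\ast\f\bigr],\f\ra\,dz'
	=e^{2\pi ix\om}\,\la f,\f\ra\,V_\f\f(z),
\end{equation*}
which is not $V_\f f(z)$; it even vanishes identically when $f\perp\f$, while $\GM{f\otimes g}$ does not. The analogous collapse occurs in the $w$-factor. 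So, carried out literally, your two claimed factor identities are false, and the proposal does not close. The content of the lemma sits entirely in the phase bookkeeping you postponed --- one must check how the specific twist $\twistmatrix$, the involution $(\cdot)^\ast$, and the commutation relations of the shifts recombine into the two reproducing kernels --- and this is exactly the ``direct, although tedious, calculation'' that the paper's proof performs as its final step rather than assumes.
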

\begin{proof}
	We apply twice Moyal's identity:
	\begin{align*}
		\GM{T}(z,w)&=\intrdd V_\f[T\pi(w)\f](z')\overline{V_\f[\pi(z)\f](z')}\,dz'\\
		&=\intrdd\intrdd V_\f[\pi(w)\f](w')\overline{V_\f[T^\ast \pi(z')\f](w')}\la\pi(z')\f,\pi(z)\f\ra\,dz'dw'\\
		&=\intrdd\intrdd \GM{T}(z',w')\la\pi(w)\f,\pi(w')\f\ra\la\pi(z')\f,\pi(z)\f\ra\,dz'dw'.
	\end{align*}
	It is then a direct, although tedious, calculation to show that
	\begin{equation*}
		\la\pi(z)\f,\pi(z')\f\ra\la\pi(w')\f,\pi(w)\f\ra=(\GM{\f\otimes\f})^\ast(z-z',w-w')e^{2\pi i (\om x'-u' v)}.
	\end{equation*}
	This concludes the proof.
\end{proof}


\begin{lemma}\label{Lem-H.R.-Lem-3.5}
	Let $T\in\TC$, $T\geq 0$ and consider $\f\in\Sch(\rd)$ such that $\norm{\f}_{L^2}=1$.  Then for any $\tau\in[0,1]$:
	\begin{equation}
		\tW T(z)=\intrdd\intrdd e^{-2\pi i[(\om x'-\om' x)+(\frac12-\frac34\tau)x'\om'+x'v]}\GM{T}\left(\frac{z'}{2}-w,-\frac{z'}{2}-w\right)\,dwdz',
	\end{equation}
	where $z=(x,\om),z'=(x',\om'),w=(u,v)\in\rdd$.
\end{lemma}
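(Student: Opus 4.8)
The plan is to start from the identity $\tW T=\cF_\sigma\FtW T$ of Lemma \ref{Lem-QHA-new-tools}$(iii)$ and to rewrite the Fourier-$\tau$-Wigner transform $\FtW T$ as an integral of the Gabor matrix, before inverting with the symplectic Fourier transform. Since $\norm{\f}_{L^2}=1$ furnishes the resolution of the identity $\intrdd \pi(w)\f\otimes\pi(w)\f\,dw=\Id$, and since $\tTFS(z')^\ast T\in\TC$ whenever $T\in\TC$, applying Lemma \ref{Lem-H.R.-Lem-2.9} to the operator $A=\tTFS(z')^\ast T$ gives
\begin{equation*}
	\FtW T(z')=\tr\!\left(\tTFS(z')^\ast T\right)=\intrdd \la T\pi(w)\f,\tTFS(z')\pi(w)\f\ra\,dw,
\end{equation*}
for $z'=(x',\om')$ and $w=(u,v)\in\rdd$. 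All the interchanges of integration needed throughout are legitimate because $T\in\TC$ and $\f\in\Sch(\rd)$ render every integrand absolutely integrable.

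Next I would invoke the composition rules for $\tTFS$ and $\pi$ recorded in Section \ref{Sec-Preliminaries}. A direct computation yields $\tTFS(z')\pi(w)=e^{-2\pi i\tau x'\om'}e^{-2\pi i x'v}\pi(z'+w)$, so that, recalling $\GM{T}(a,b)=\la T\pi(b)\f,\pi(a)\f\ra$,
\begin{equation*}
	\FtW T(z')=e^{2\pi i\tau x'\om'}\intrdd e^{2\pi i x'v}\,\GM{T}(z'+w,w)\,dw.
\end{equation*}
Applying $\cF_\sigma$ then gives $\tW T(z)=\intrdd e^{-2\pi i(\om x'-\om' x)}\FtW T(z')\,dz'$, that is, a double integral in $(w,z')$ of $\GM{T}(z'+w,w)$ against the accumulated phase.

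The final step is to symmetrize the arguments of the Gabor matrix. Performing, for each fixed $z'$, the measure-preserving change of variables $w\mapsto -\tfrac{z'}{2}-w$ turns $\GM{T}(z'+w,w)$ into $\GM{T}\!\left(\tfrac{z'}{2}-w,-\tfrac{z'}{2}-w\right)$, precisely the kernel in the statement, while sending $v\mapsto-\tfrac{\om'}{2}-v$ inside the phase $e^{2\pi i x'v}$. Collecting the three phase contributions -- the symplectic phase $e^{-2\pi i(\om x'-\om' x)}$, the factor $e^{2\pi i\tau x'\om'}$ coming from $\tTFS$, and the quadratic phase generated by the substitution -- and simplifying should produce the exponent $-2\pi i[(\om x'-\om' x)+(\tfrac12-\tfrac34\tau)x'\om'+x'v]$ of the statement.

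I expect the main obstacle to be exactly this phase bookkeeping: tracking every $\tau$-dependent quadratic phase through the composition rule for $\tTFS(z')\pi(w)$, through the symplectic Fourier transform, and through the symmetrizing substitution, so as to land on the precise coefficient $\tfrac12-\tfrac34\tau$ of $x'\om'$. A useful cross-check, and an alternative route, is to first settle the case $\tau=1/2$ using the symmetric shift $\ohTFS$ and $\FohW$ -- where the covariance of the cross-Wigner distribution is cleanest -- and then to restore general $\tau$ via the relation $\FtW T(x,\om)=e^{-2\pi i(1/2-\tau)x\om}\FohW T(x,\om)$ of Lemma \ref{Lem-QHA-new-tools}$(iv)$; the two computations must agree on the final exponent.
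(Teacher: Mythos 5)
Your strategy coincides in essence with the paper's own proof: both start from $\tW T=\cF_\sigma\FtW T$, convert the trace $\tr(\tTFS(z')^\ast T)$ into a Gabor-matrix integral via the trace formula of Lemma \ref{Lem-H.R.-Lem-2.9}, and then track phases. The only structural difference is that you symmetrize at the end through the substitution $w\mapsto-\tfrac{z'}{2}-w$, whereas the paper symmetrizes at the start by splitting $\tTFS(z')=e^{\frac{\pi}{2}i(1-2\tau)x'\om'}\tTFS(z'/2)\tTFS(z'/2)$ and invoking cyclicity of the trace; your variant is, if anything, cleaner.

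The gap is precisely the step you deferred. Your intermediate formula is correct: since $\tTFS(z')\pi(w)=e^{-2\pi i(\tau x'\om'+x'v)}\pi(z'+w)$, one gets
\begin{equation*}
	\FtW T(z')=e^{2\pi i\tau x'\om'}\intrdd e^{2\pi ix'v}\,\GM{T}(z'+w,w)\,dw.
\end{equation*}
But the substitution $w\mapsto-\tfrac{z'}{2}-w$ sends $v\mapsto-\tfrac{\om'}{2}-v$ and hence contributes exactly the factor $e^{-\pi ix'\om'}$, giving
\begin{equation*}
	\FtW T(z')=e^{-2\pi i\left(\frac12-\tau\right)x'\om'}\intrdd e^{-2\pi ix'v}\,\GM{T}\left(\tfrac{z'}{2}-w,-\tfrac{z'}{2}-w\right)dw,
\end{equation*}
so the coefficient of $x'\om'$ comes out as $\frac12-\tau$, not the $\frac12-\frac34\tau$ that you assert the bookkeeping "should produce" and that the statement claims. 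Your own proposed cross-check settles which is right: the $w$-integral above is manifestly independent of $\tau$, so Lemma \ref{Lem-QHA-new-tools}$(iv)$ forces the $\tau$-dependence of $\FtW T$ to be exactly $e^{2\pi i\tau x'\om'}$ times a $\tau$-free quantity, i.e.\ the coefficient of $x'\om'$ must have slope $-1$ in $\tau$, ruling out $-\frac34$. The discrepancy originates in the paper, not in your method: in the third equality of the paper's proof, converting $\tTFS(z'/2)^\ast\tTFS(w)^\ast$ and $\tTFS(z'/2)\tTFS(w)^\ast$ produces the factor $e^{-\frac{\pi}{2}i(1-2\tau)x'\om'}$ (the $uv$-phases cancel between the two slots of the inner product), not the $e^{-\frac{\pi}{2}i(1-\tau)x'\om'}$ written there; correcting this yields the total phase $e^{-2\pi i(\frac12-\tau)x'\om'}$, in agreement with your route. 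So to turn your proposal into a proof you must actually carry out the phase computation, and what it proves is the lemma with $\frac12-\tau$ in place of $\frac12-\frac34\tau$; the identity as stated fails already for $\tau=1$ and $T=\f\otimes\f$ with $\f$ a normalized Gaussian.
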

\begin{proof}
	We start rephrasing the $\tau$-Wigner distribution of $T$:
	\begin{equation*}
		\tW T(z)=\cF_{\sigma}\FtW T(z)=\intrdd e^{-2\pi i (\om x'-\om' x)}\tr(\tTFS(z')^\ast T)\,dz'.
	\end{equation*}
	Recalling the properties for $\tTFS$, see Section \ref{Sec-Preliminaries}, we see that
	\begin{align*}
		\tTFS(z'/2+z'/2)&=e^{2\pi i[(1-\tau)\frac{x'\om'}{4}-\tau \frac{x'\om'}{4}]}\tTFS(z'/2)\tTFS(z'/2)\\
		&=e^{\frac{\pi}{2}i(1-2\tau)x'\om'}\tTFS(z'/2)\tTFS(z'/2).
	\end{align*}
	Taking the adjoint we get $\tTFS(z')^\ast=e^{-\frac{\pi}{2}i(1-2\tau)x'\om'}\tTFS(z'/2)^\ast\tTFS(z'/2)^\ast$ and we write using Lemma \ref{Lem-H.R.-Lem-2.9}:
	\begin{align*}
		\tr(\tTFS(z')^\ast T)&=e^{-\frac{\pi}{2}i(1-2\tau)x'\om'}\tr(\tTFS(z'/2)^\ast T\tTFS(z'/2)^\ast)\\
		&=e^{-\frac{\pi}{2}i(1-2\tau)x'\om'}\intrdd\la T\tTFS(z'/2)^\ast\tTFS(w)^\ast\f,\tTFS(z'/2)\tTFS(w)^\ast\f\ra\,dw\\
		&=e^{-\frac{\pi}{2}i(1-2\tau)x'\om'}e^{-\frac{\pi}{2}i(1-\tau)x'\om'}\\
		&\times \intrdd \la T\tTFS(-z'/2)\tTFS(-w)\f,\tTFS(z'/2)\tTFS(-w)\f\ra\,dw\\
		&=e^{-\frac{\pi}{2}i(2-3\tau)x'\om'}\intrdd \la T\pi(-z'/2)\pi(-w)\f,\pi(z'/2)\pi(-w)\f\ra\,dw\\
		&=e^{-\frac{\pi}{2}i(2-3\tau)x'\om'}\intrdd e^{-2\pi i x'v} \la T\pi(-z'/2-w)\f,\pi(z'/2-w)\f\ra\,dw.
	\end{align*}
	This concludes the argument.
\end{proof}

%
%
\section{A characterization of Schwartz operators}\label{Ch-7-Sec-char-Sch-op}
In this section we introduce weighted versions of $\FO$ and give an alternative description of the class $\mathfrak{S}$. We use the polynomial weight
\begin{equation}\label{Eq-v_s}
	v_s(z)\coloneqq (1+\abs{z}^2)^{\frac{s}{2}},\qquad z\in\rdd,
\end{equation}
where $s\geq 0$. In order to avoid an extremely cumbersome notation, just for the weight  functions $v_s$ we shall use the following: 
\begin{equation*}
	v_s\otimes v_s(z,w)\coloneqq \K{v_s\otimes \overline{v_s}}=v_s(z)v_s(w),\qquad\forall z,w\in\rdd.
\end{equation*}

\begin{definition}
	For $s\geq 0$ we define the weighted class of Feichtinger operators as
	\begin{equation}
		\wFO\coloneqq\{S\colon \MD(\rd)\to\FA(\rd)\,|\,S\,\text{is linear, continuous with kernel}\,\K{S}\in\wFA(\rdd)\}.
	\end{equation}
	For $S$ in $\wFO$ we define the mapping 
	\begin{equation}\label{Eq-norm-wFO}
		\norm{S}_{\wFO}\coloneqq\norm{\K{S}}_{\wFA}.
	\end{equation}
\end{definition}

\begin{remark}
	\begin{itemize}
		\item[$(i)$] For $s=0$ we recover the Feichtinger operators $\FO$;
		\item[$(ii)$] The mapping defined in \eqref{Eq-norm-wFO} is a norm on $\wFO$ and it is easy to see that $(\wFO,\norm{\cdot}_{\wFO})$ is a Banach space and the following continuous inclusion holds true for every $s\geq0$:
		\begin{equation}
			\wFO\hookrightarrow\FO.
		\end{equation}
	\end{itemize}
\end{remark}

\begin{lemma}
	For any $S\in\wFO$ there exist $\seqn{f},\seqn{g}\subseteq \wFA(\rdd)$ such that
	\begin{equation*}
		S=\sumn f_n\otimes g_n,\qquad\sumn \norm{f_n}_{M^1_{v_s}}\norm{g_n}_{M^1_{v_s}}\leq+\infty,\quad \K{S}=\sumn\K{f_n\otimes g_n}.
	\end{equation*}
\end{lemma}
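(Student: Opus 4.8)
The plan is to run the argument of Lemma~\ref{Lem-representation-FO} essentially verbatim, with the single structural input $\FA(\rdd)=\FA(\rd)\hotimes\FA(\rd)$ of \cite[Lemma~2.1]{FeiJak2022} replaced by its weighted counterpart. Concretely, the one fact I need is the projective tensor factorization
\begin{equation*}
	M^1_{v_s\otimes v_s}(\rdd)=M^1_{v_s}(\rd)\hotimes M^1_{v_s}(\rd),
\end{equation*}
with equivalence of the modulation-space and projective-tensor norms. Because $v_s\otimes v_s$ is a tensor-product weight and both sides are characterized by $L^1$-integrability of the STFT against the corresponding weight (see the STFT description of weighted modulation spaces in \cite{Jakobsen2018}), this reduces to the factorization of weighted $L^1$-spaces with tensor weights; I would isolate it as a short lemma or cite it, exactly as \cite{FeiJak2022} does in the unweighted case.

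Granting this, the proof is pure bookkeeping. By definition $S\in\wFO$ means $\K{S}\in\wFA(\rdd)=M^1_{v_s\otimes v_s}(\rdd)$, so the factorization lets me expand the kernel as an absolutely convergent series of elementary tensors,
\begin{equation*}
	\K{S}=\sumn F_n(x)G_n(y),\qquad F_n,G_n\in M^1_{v_s}(\rd),\qquad\sumn\norm{F_n}_{M^1_{v_s}}\norm{G_n}_{M^1_{v_s}}<+\infty.
\end{equation*}
To match the complex conjugation built into an operator kernel, I recall $\K{f\otimes g}(x,y)=f(x)\overline{g(y)}$ and set $f_n:=F_n$, $g_n:=\overline{G_n}$. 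Since $v_s$ is even, conjugation is an isometric involution of $M^1_{v_s}(\rd)$, so $\norm{g_n}_{M^1_{v_s}}=\norm{G_n}_{M^1_{v_s}}$ and the summability bound is preserved. Then $\sumn\K{f_n\otimes g_n}(x,y)=\sumn F_n(x)G_n(y)=\K{S}$, and because $\K{\cdot}$ realizes $\wFO$ isometrically as $\wFA(\rdd)$ (this is precisely how $\norm{\cdot}_{\wFO}$ is defined), the kernel identity lifts to the operator identity $S=\sumn f_n\otimes g_n$ with $\seqn{f},\seqn{g}\subseteq M^1_{v_s}(\rd)$, as claimed.

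The only genuine obstacle is the weighted factorization, and within it the \emph{quantitative} part: one must know not merely that the two spaces coincide as sets but that the projective-tensor norm is equivalent to $\norm{\cdot}_{M^1_{v_s\otimes v_s}}$, since the required summability $\sumn\norm{f_n}_{M^1_{v_s}}\norm{g_n}_{M^1_{v_s}}<+\infty$ is exactly the finiteness of the projective norm of $\K{S}$. Choosing a product window $g(x)g(y)$, the STFT of a product $F(x)G(y)$ factors as $V_gF(x,\xi)\,V_gG(y,\eta)$, and since $v_s\otimes v_s$ is a tensor weight the whole identity reduces to the projective factorization $L^1_{v_s\otimes v_s}=L^1_{v_s}\hotimes L^1_{v_s}$ of weighted $L^1$-spaces; this is the step I would take the most care to justify, everything downstream being a transcription of the unweighted argument.
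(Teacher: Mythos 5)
Your proposal is correct and matches the paper's proof, which likewise reduces everything to the weighted projective tensor factorization $M^1_{v_s\otimes v_s}(\rdd)=M^1_{v_s}(\rd)\hat{\otimes}M^1_{v_s}(\rd)$ and then refers back to the argument of Lemma \ref{Lem-representation-FO}. Your additional care about the conjugation convention in $\K{f\otimes g}$ and the norm equivalence of the projective and modulation-space norms is a sound fleshing-out of details the paper leaves implicit.
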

\begin{proof}
	The proof follows from the fact that
	\begin{equation*}
		\wFA(\rdd)=M^1_{v_s}(\rd)\hat{\otimes}M^1_{v_s}(\rd).
	\end{equation*}
	See also the proof of Lemma \ref{Lem-representation-FO}.
\end{proof}

\begin{theorem}
	For every $\tau\in[0,1]$ the mapping $\tW\colon\wFO\to\wFA(\rdd)$ is a topological isomorphism with inverse given by $\Opt\colon\wFA(\rdd)\to\wFO$.
\end{theorem}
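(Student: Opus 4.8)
The plan is to follow the unweighted Corollary \ref{Cor-tW-Opt-iso-FO-FA} line by line, upgrading every estimate to the weighted norms. Since $\wFO\hookrightarrow\FO$ and $\wFA(\rdd)\hookrightarrow\FA(\rdd)$ continuously, the identities $\Opt\circ\tW=\Id$ on $\FO$ and $\tW\circ\Opt=\Id$ on $\FA(\rdd)$ proved there restrict to the corresponding identities on the subspaces $\wFO$ and $\wFA(\rdd)$. Thus $\tW$ and $\Opt$ are already mutually inverse bijections between these spaces, and once I show that both are bounded in the weighted norms they are automatically a pair of mutually inverse topological isomorphisms. So the whole task reduces to the two boundedness statements $\tW\colon\wFO\to\wFA(\rdd)$ and $\Opt\colon\wFA(\rdd)\to\wFO$.

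For the boundedness of $\tW$, I would use the weighted spectral decomposition from the preceding Lemma: for $S\in\wFO$ write $S=\sumn f_n\otimes g_n$ with $f_n,g_n\in M^1_{v_s}(\rd)$ and $\sumn\norm{f_n}_{M^1_{v_s}}\norm{g_n}_{M^1_{v_s}}<\infty$. As Lemma \ref{Lem-QHA-new-tools} holds on $\FO\supseteq\wFO$, I may write $\tW=\cF_\sigma\FtW$ and compute $\FtW S=\sumn\tV_{g_n}f_n$ exactly as in \eqref{Eq-FtW-effect}. The key input is the weighted analogue of the product estimate of Theorem \ref{Th-Opt-tW-FO-MD}, namely
\[
	\norm{\tV_g f}_{\wFA}\lesssim\norm{f}_{M^1_{v_s}}\norm{g}_{M^1_{v_s}},
\]
which follows by choosing a tensor window on $\rdd$ and invoking the factorization $\wFA(\rdd)=M^1_{v_s}(\rd)\hat{\otimes}M^1_{v_s}(\rd)$ (the weighted counterpart of \cite[Theorem 5.3]{Jakobsen2018}, using that $v_s$ is submultiplicative). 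Summing over $n$ and passing to the infimum over admissible decompositions as in Theorem \ref{Th-Opt-tW-FO-MD} gives $\norm{\FtW S}_{\wFA}\lesssim\norm{\K{S}}_{\wFA}\asymp\norm{S}_{\wFO}$; since $\cF_\sigma$ is bounded on $\wFA(\rdd)$ — the radial polynomial weight $v_s\otimes v_s$ being preserved under the symplectic Fourier transform — boundedness of $\tW$ follows.

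For the boundedness of $\Opt$, I would use the explicit kernel formula from Corollary \ref{Cor-tW-Opt-iso-FO-FA}, namely $\K{\Opt(a)}=\Psi_\tau\cF^{-1}_2 a$, and show that each factor preserves $\wFA(\rdd)=M^1_{v_s\otimes v_s}(\rdd)$. The partial inverse Fourier transform $\cF^{-1}_2$ acts on phase space as a $90^\circ$ rotation in the second pair of time-frequency coordinates; because $v_s$ is radial, the weight $v_s\otimes v_s$ is invariant under this rotation, so $\cF^{-1}_2$ is bounded (indeed an isometry up to constants) on $M^1_{v_s\otimes v_s}$. The map $\Psi_\tau$ is the linear change of variables induced by the invertible matrix $\left[\begin{smallmatrix}1-\tau&\tau\\1&-1\end{smallmatrix}\right]\otimes I_d$; since $v_s$ is a polynomial weight it is moderate with respect to every linear automorphism, so $F\mapsto F\circ\Psi_\tau$ is bounded on $M^1_{v_s\otimes v_s}$ with constant depending only on $s$ and $\tau$. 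Composing, $\Opt\colon\wFA(\rdd)\to\wFO$ is continuous, which completes the argument.

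The hard part will be making the two weighted transfer estimates precise, above all the boundedness of the coordinate change $\Psi_\tau$ on $M^1_{v_s\otimes v_s}$, since $\Psi_\tau$ genuinely mixes the two tensor slots and so does not split as a tensor of one-variable operators. Both this and the product bound $\norm{\tV_g f}_{\wFA}\lesssim\norm{f}_{M^1_{v_s}}\norm{g}_{M^1_{v_s}}$ ultimately rest on the radial symmetry of $v_s$ (which disposes of the Fourier factors) and its polynomial moderateness under linear maps; I would verify the coordinate change either by the standard STFT-covariance computation for affine substitutions on modulation spaces or by appealing to the known invariance of $M^1_{v_s}$-type spaces under metaplectic and affine operators with polynomially moderate weights.
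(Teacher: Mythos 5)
Your overall strategy is the one the paper itself prescribes: its proof of this theorem is literally the instruction to rerun Theorem~\ref{Th-Opt-tW-FO-MD} and Corollary~\ref{Cor-tW-Opt-iso-FO-FA} with weights, and your reduction to two weighted boundedness claims plus restriction of the unweighted inversion identities is exactly that pattern. The genuine gap is that the two ``weighted transfer estimates'' you rest everything on are not hard-but-true technicalities: for a tensor-product weight they are \emph{false}, and the justifications you offer (radiality of $v_s$, moderateness of polynomial weights under linear automorphisms) do not apply, precisely because the relevant linear maps mix the two slots of the weight. Test your key input $\norm{\tV_g f}_{\wFA}\lesssim\norm{f}_{M^1_{v_s}}\norm{g}_{M^1_{v_s}}$ on the Gaussian pair $f=T_a\varphi$, $g=\varphi$ (say for $\tau=0$; for $\tau=1$ use $f=M_a\varphi$ instead). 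The covariance property gives
\begin{equation*}
	V_\varphi(T_a\varphi)(v_1,v_2)=e^{-2\pi i a v_2}\,V_\varphi\varphi(v_1-a,v_2),
\end{equation*}
so $V_gf$ is a translated \emph{and modulated} copy of $V_\varphi\varphi$: its time-frequency content sits at position $(a,0)$ and frequency $(0,-a)$ in $\bR^{4d}$. At that point any tensor-product reading of $v_s\otimes v_s$ --- whether $v_s(z)v_s(\zeta)$ or the kernel-theorem weight $v_s(z_1,\zeta_1)v_s(z_2,\zeta_2)$, which is the reading forced by your appeal to $\wFA(\rdd)=M^1_{v_s}(\rd)\hotimes M^1_{v_s}(\rd)$ --- evaluates to $\asymp(1+a^2)^{s}$, so $\norm{V_g f}_{\wFA}\asymp(1+a^2)^{s}$, while $\norm{T_a\varphi}_{M^1_{v_s}}\norm{\varphi}_{M^1_{v_s}}\asymp(1+a^2)^{s/2}$. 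Equivalently, the rank-one operator $S=T_a\varphi\otimes\varphi$ has the unmodulated kernel $\varphi(x-a)\overline{\varphi(y)}$, hence $\norm{S}_{\wFO}\asymp(1+a^2)^{s/2}$, yet $\norm{\mathcal{F}_{W_0}S}_{\wFA}\asymp(1+a^2)^{s}$, and applying $\cF_\sigma$ does not repair this; so $\FtW$ and $\tW$ are not even bounded from $\wFO$ into $\wFA(\rdd)$ under these readings. The same mechanism defeats your $\Psi_\tau$ step: for the kernel-theorem weight, $F(y_1,y_2)=\varphi(y_1-a)\varphi(y_2)$ has $\norm{F}_{\wFA}\asymp(1+a^2)^{s/2}$ but $\norm{\Psi_\tau F}_{\wFA}\asymp(1+a^2)^{s}$, because $\Psi_\tau$ mixes the two slots --- the very point you flagged as ``the hard part'' and then discharged by an invariance principle that holds for isotropic, not product, weights.

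The conceptual reason the unweighted pattern does not transfer verbatim is that the constant weight is invariant under \emph{every} linear transformation of $\bR^{4d}$, whereas the phase-space maps induced by $\tV$, $\cF_2^{-1}$ and $\Psi_\tau$ all mix the slots of a tensor weight, and a product weight is moderate only under maps respecting its two factors. Your scheme can be saved, but only by fixing the meaning of $\wFA(\rdd)$ first: either (1) use the isotropic weight $(1+\abs{z}^2+\abs{\zeta}^2)^{s/2}$ on $\bR^{4d}$ on both the kernel and the phase-space side, in which case all your invariance claims become correct --- and in fact one can then bypass the spectral decomposition altogether and argue directly from $\tW S=\cF_2\bigl(\K{S}\circ A_\tau\bigr)$, $A_\tau(x,t)=(x+\tau t,x-(1-\tau)t)$, since linear substitutions and partial Fourier transforms are bounded on isotropically weighted $M^1$ --- at the price that the factorization $\wFA(\rdd)=M^1_{v_s}\hotimes M^1_{v_s}$ survives only as a pair of inclusions with a loss from $s$ to $2s$; or (2) keep the product weight on kernels and accept a $\tau$-dependent rotated weight on the image side, for which the Jakobsen-type identity is exact. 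Only the intersection over all $s\geq 0$ (Theorem~\ref{Th-char-Sch-Op}, the paper's actual goal) is insensitive to these choices, since all the weights in play are squeezed between $(1+\abs{Z}^2)^{s/2}$ and $(1+\abs{Z}^2)^{s}$. In fairness, the paper's one-sentence proof glosses over exactly the same point; but since your write-up is the one supplying the details, the details must confront it, and as written they do not survive the counterexample above.
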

\begin{proof}
	The proof follows the same pattern as the ones of Theorem \ref{Th-Opt-tW-FO-MD} and Corollary \ref{Cor-tW-Opt-iso-FO-FA}. 
\end{proof}

\begin{corollary}\label{Cor-char-wFO}
	An operator $S$ belongs to $\wFO$ if and only if for some (hence every) $\tau\in[0,1]$ $\tW S\in\wFA(\rdd)$.
\end{corollary}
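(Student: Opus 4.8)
The plan is to read the corollary off the preceding theorem, which asserts that $\tW\colon\wFO\to\wFA(\rdd)$ is a topological isomorphism with inverse $\Opt\colon\wFA(\rdd)\to\wFO$ for every $\tau\in[0,1]$; the biconditional and the ``some/every'' clause will then follow from this isomorphism together with the symbol identity $\Opt\circ\tW=\Id$ recorded in \eqref{Eq-Rem-tsym} and Corollary \ref{Cor-tW-Opt-iso-FO-FA}$(i)$.

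First I would dispose of the forward implication. If $S\in\wFO$, then applying the isomorphism $\tW$ of the preceding theorem for any chosen $\tau$ yields $\tW S\in\wFA(\rdd)$; since the theorem holds for every $\tau\in[0,1]$, this membership in fact holds simultaneously for all $\tau$. This already establishes the ``for some (hence every)'' direction on the side where $S$ is assumed to lie in $\wFO$.

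Next I would treat the converse. Fix a $\tau$ for which $\tW S\in\wFA(\rdd)$. Since $\wFO\hookrightarrow\FO\hookrightarrow\MO$ and $S$ is a continuous linear operator of $\MO$, it already possesses a well-defined $\tau$-Wigner distribution, and by \eqref{Eq-Rem-tsym} this distribution is precisely the $\tau$-symbol $\tsym{S}$, so that $\Opt(\tW S)=\Opt(\tsym{S})=S$ as in Corollary \ref{Cor-tW-Opt-iso-FO-FA}$(i)$. Setting $a\coloneqq\tW S\in\wFA(\rdd)$ and invoking the inverse map $\Opt\colon\wFA(\rdd)\to\wFO$ from the preceding theorem, I obtain $\Opt(a)\in\wFO$; but $\Opt(a)=\Opt(\tW S)=S$, whence $S\in\wFO$. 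Feeding this back into the forward implication then upgrades the hypothesis from a single $\tau$ to all $\tau\in[0,1]$, which completes the ``some (hence every)'' equivalence.

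The argument is almost entirely formal, so the only delicate point --- and hence the main, if mild, obstacle --- is to ensure at the outset that $S$ is taken in a class on which $\tW$ and the inversion identity $\Opt\circ\tW=\Id$ are genuinely meaningful. Once $S$ is assumed to be a continuous linear operator of $\MO$, this is guaranteed by the extension of $\tW$ and $\Opt$ to the isomorphism between $\MO$ and $\MD(\rdd)$, and the whole proof reduces to transporting the stated $\wFO$--$\wFA(\rdd)$ isomorphism through the symbol correspondence $\tsym{S}=\tW S$.
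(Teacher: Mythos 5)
Your proposal is correct and follows essentially the same route as the paper, which states this corollary as an immediate consequence of the preceding theorem that $\tW\colon\wFO\to\wFA(\rdd)$ is a topological isomorphism with inverse $\Opt$ for every $\tau\in[0,1]$ (mirroring how Corollary \ref{Cor-tW-Opt-iso-FO-FA}$(ii)$ is deduced from $(i)$). Your added care in checking that $\tW S$ and the inversion $\Opt\circ\tW=\mathrm{Id}$ are meaningful on the larger class via the extension to $\MO$ is a sensible explicit treatment of a point the paper leaves implicit.
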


\begin{theorem}\label{Th-char-Sch-Op}
	The following is true:
	\begin{equation}
		\mathfrak{S}=\bigcap_{s\geq 0}\wFO.
	\end{equation}
\end{theorem}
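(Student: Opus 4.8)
The plan is to transfer the asserted operator identity to the level of integral kernels, where it becomes the classical description of the Schwartz class as a projective intersection of weighted modulation spaces. First I would rewrite both sides through their kernels. By the very definition of $\wFO$ one has $S\in\wFO$ if and only if $\K{S}\in\wFA(\rdd)=M^1_{v_s\otimes v_s}(\rdd)$, so
\[
S\in\bigcap_{s\geq 0}\wFO\quad\Longleftrightarrow\quad \K{S}\in\bigcap_{s\geq 0}M^1_{v_s\otimes v_s}(\rdd).
\]
For the right-hand side of the theorem, recall that $\mathfrak{S}$ consists of the Weyl quantizations $\Opt(a)$ with $\tau=\tfrac12$ of symbols $a\in\Sch(\rdd)$. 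By the kernel formula \eqref{Eq-kernel-Opt-a}, as made explicit in the proof of Corollary \ref{Cor-tW-Opt-iso-FO-FA}, the kernel of such an operator is $\K{\Opt(a)}=\Psi_{1/2}\cF^{-1}_2 a$; since the linear change of variables $\Psi_{1/2}$ and the partial Fourier transform $\cF^{-1}_2$ are both topological isomorphisms of $\Sch(\rdd)$, we obtain $S\in\mathfrak{S}$ if and only if $\K{S}\in\Sch(\rdd)$. Hence the theorem is equivalent to the kernel identity $\bigcap_{s\geq 0}M^1_{v_s\otimes v_s}(\rdd)=\Sch(\rdd)$.

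Second I would replace the tensor weight by a radial one. Writing a point of the time-frequency plane of $\rdd$ as $(Z,W)\in\rdd\times\rdd$, the elementary inequality $1+a+b\leq(1+a)(1+b)$ for $a,b\geq 0$ yields
\[
\big(1+|Z|^2+|W|^2\big)^{s/2}\leq v_s(Z)\,v_s(W)\leq\big(1+|Z|^2+|W|^2\big)^{s}.
\]
Thus the families $\{v_s\otimes v_s\}_{s\geq 0}$ and $\{(1+|Z|^2+|W|^2)^{s/2}\}_{s\geq 0}$ dominate one another cofinally on $\R^{4d}$, so the associated intersections of weighted $M^1$-spaces coincide; denoting by $v_s$ also the radial polynomial weight on the phase space $\R^{4d}$ of $\rdd$, this gives
\[
\bigcap_{s\geq 0}M^1_{v_s\otimes v_s}(\rdd)=\bigcap_{s\geq 0}M^1_{v_s}(\rdd).
\]

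Finally I would invoke the well-known description of the Schwartz space as a projective limit of polynomially weighted modulation spaces: fixing a nonzero window $g\in\Sch$, a tempered distribution $h$ on $\R^n$ belongs to $\Sch(\R^n)$ precisely when its short-time Fourier transform $V_g h$ decays faster than any polynomial, which is exactly the condition $h\in\bigcap_{s\geq 0}M^1_{v_s}(\R^n)$. Applied with $n=2d$ to $h=\K{S}$, this yields $\bigcap_{s\geq 0}M^1_{v_s}(\rdd)=\Sch(\rdd)$, and chaining the three reductions proves the claim. The bookkeeping with the kernel theorems and the weight comparison is routine; the one genuinely substantial ingredient is this last function-space identity. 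The point I would be most careful about is that the window $g$ defining the (weighted) $M^1$-norms may be chosen in $\Sch$, so that $h\in\Sch$ forces $V_g h\in\Sch(\R^{2n})$ and hence $h\in M^1_{v_s}$ for every $s$, while conversely integrability of $V_g h\cdot v_s$ for all $s$, together with the inversion formula, forces $h$ to be Schwartz.
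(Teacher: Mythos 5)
Your proposal is correct, and it reaches the result by a somewhat different route than the paper: you work at the level of kernels, while the paper works at the level of Weyl symbols. The paper's proof is two lines: by Corollary \ref{Cor-char-wFO} (which rests on the weighted isomorphism $\tW\colon\wFO\to\wFA(\rdd)$ stated just before it), $S\in\bigcap_{s\geq 0}\wFO$ if and only if $\tW S\in\bigcap_{s\geq 0}\wFA(\rdd)=\cS(\rdd)$, and since $\ohW S$ is the Weyl symbol of $S$, this is exactly the definition of $S\in\mathfrak{S}$. You instead read membership in $\wFO$ directly off the definition as $\K{S}\in\wFA(\rdd)$, translate $\mathfrak{S}$ into the kernel condition $\K{S}\in\Sch(\rdd)$ via $\K{\Opt(a)}=\Psi_{1/2}\cF_2^{-1}a$ together with the invariance of $\Sch(\rdd)$ under partial Fourier transforms and invertible linear changes of variables, and then identify the two intersections of weighted spaces through the cofinal comparison of $v_s\otimes v_s$ with the radial weight. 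Both arguments hinge on the same single substantial ingredient, the classical identity $\bigcap_{s\geq 0}M^1_{v_s}=\Sch$, which the paper also invokes without proof (in the form $\bigcap_{s\geq 0}\wFA(\rdd)=\cS(\rdd)$). What your route buys: it bypasses the weighted Wigner isomorphism theorem, whose proof the paper only indicates (``follows the same pattern''), replacing it by elementary mapping properties of the Weyl correspondence on $\Sch$; your explicit weight comparison also makes precise a step the paper leaves implicit, since the tensor weight $v_s\otimes v_s$ and the radial weight on $\R^{4d}$ must be matched before quoting the classical identity. What the paper's route buys: brevity, given the machinery already established, and the $\tau$-independence of the characterization (the ``some, hence every $\tau$'' of Corollary \ref{Cor-char-wFO}) is carried along automatically rather than being fixed at $\tau=\tfrac12$ from the start.
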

\begin{proof}
	By Corollary \ref{Cor-char-wFO}, $S$ belongs to the set on the right-hand side if and only if 
	\begin{equation*}
		\tW S\in\bigcap_{s\geq 0}\wFA(\rdd)=\cS(\rdd).
	\end{equation*}
	The claim follows since $\ohW S$ is the Weyl symbol of $S$, i.e. $\mathrm{a}_{1/2}^S=\ohW S$.
\end{proof}

We recall that a function $F$ on $\rdd$ is called rapidly decaying if for every multiindex $\alpha,\beta\in\bN_0^{d}$ we have
\begin{equation*}
	\sup_{x,\om\in\rd}\abs{x^\al \om^\be F(x,\om)}<+\infty,
\end{equation*}
where, if $x=(x_1,\ldots,x_d)$ and $\alpha=(\alpha_1,\ldots,\alpha_d)$, $x^\al$ stands for $x_1^{\alpha_1}\cdot\ldots\cdot x_d^{\alpha_d}$.\\

In \cite[Theorem 1.1]{HerRei2022} a sufficient condition is given for a positive trace-class operator to be in $\mathfrak{S}$. Namely, if $T\in  B(L^2)$, $T\geq 0$, is such that $\tW T$ exists for some $\tau \in[0,1]$ and it is rapidly decreasing, then $T\in\mathfrak{S}$ and $\tW T$ exists for every $\tau\in[0,1]$. In this spirit, we provide the following sufficient condition for a generic $S\in B(L^2)$. Observe that we do not not require $S$ to be positive.

\begin{corollary}\label{Cor-suff-cond-Sch-op}
	Let $S\in B(L^2)$ and assume that for some $\tau\in[0,1]$ $\tW S$ exists. Suppose also that, w.r.t. some non-zero window in  $L^2(\rdd)$, the STFT of $\tW S$ is rapidly decaying. Then $\tW S$ exists for every $\tau\in [0,1]$ and $S$ is in $\mathfrak{S}$. 
\end{corollary}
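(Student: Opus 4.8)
The plan is to show that the hypotheses force $\tW S$ to belong to $\cS(\rdd)$ for the given value of $\tau$, and then to feed this into the machinery of Section \ref{Ch-7-Sec-char-Sch-op}. Recall from the proof of Theorem \ref{Th-char-Sch-Op} that $\bigcap_{s\geq 0}\wFA(\rdd)=\cS(\rdd)$, and that by Corollary \ref{Cor-char-wFO} the membership $\tW S\in\wFA(\rdd)$ for one (hence every) $\tau$ is equivalent to $S\in\wFO$. Hence, once we know $\tW S\in\cS(\rdd)$, it lies in $\wFA(\rdd)$ for every $s\geq 0$, so $S\in\wFO$ for every $s\geq 0$, i.e. $S\in\bigcap_{s\geq 0}\wFO=\mathfrak{S}$; and the isomorphisms $\tW\colon\wFO\to\wFA(\rdd)$, valid for all $\tau$ and all $s$, then guarantee that $\tW S$ exists, and in fact lies in $\cS(\rdd)$, for every $\tau\in[0,1]$. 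The whole problem therefore reduces to the single implication: if $V_g(\tW S)$ is rapidly decaying for some $0\neq g\in L^2(\rdd)$, then $\tW S\in\cS(\rdd)$.

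First I would recast rapid decay in the language of modulation spaces. A function $F$ on $\rdd$ belongs to $\cS(\rdd)$ if and only if $V_\phi F$ is rapidly decaying on $\R^{4d}$ for a Schwartz window $\phi$: indeed $\|\,v_s\otimes v_s\cdot V_\phi F\,\|_{L^\infty}<\infty$ for every $s$ self-improves to $V_\phi F\in L^1_{v_s\otimes v_s}(\R^{4d})$ by integrating against an integrable polynomial weight, whence $F\in\bigcap_{s\geq 0}\wFA(\rdd)=\cS(\rdd)$. It therefore suffices to upgrade the decay of $V_g(\tW S)$, known only for the $L^2$-window $g$, to the decay of $V_\phi(\tW S)$ for a Schwartz window $\phi$.

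The key step is thus a change of window. I would fix $\gamma,\phi\in\cS(\rdd)$ with $\langle\gamma,g\rangle\neq 0$, which is possible since $g\neq 0$ and $\cS(\rdd)$ is dense in $L^2(\rdd)$. Applying the reproducing formula with analysis window $g$ and synthesis window $\gamma$ and then taking the short-time Fourier transform with $\phi$, the covariance of the STFT yields the pointwise bound
\[
|V_\phi(\tW S)|\ \leq\ \frac{1}{|\langle\gamma,g\rangle|}\,\big(|V_g(\tW S)|\ast|V_\phi\gamma|\big).
\]
Since $\phi,\gamma\in\cS(\rdd)$, the kernel $V_\phi\gamma$ lies in $\cS(\R^{4d})$, and the convolution of a rapidly decaying function with a Schwartz function is again rapidly decaying; together with the hypothesis on $V_g(\tW S)$ this shows that $V_\phi(\tW S)$ is rapidly decaying, hence $\tW S\in\cS(\rdd)$.

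The main obstacle is precisely this change of window, because the given window lies only in $L^2(\rdd)$, not in $\cS(\rdd)$ or $\FA(\rdd)$, so one must first be sure that $V_g(\tW S)$ and the reproducing formula are meaningful for such a rough window. The natural reading of the hypothesis is that $\tW S$ is itself square-integrable---otherwise its STFT against an $L^2$-window would be undefined---which places us squarely within the classical $L^2$-theory of the STFT, where Moyal's identity (cf. Lemma \ref{Lem-QHA-new-tools}) and the $L^2$-reproducing formula hold for arbitrary $L^2$ windows. Granting this, the displayed estimate is rigorous and the decay transfers to Schwartz windows as above; everything after that is bookkeeping with the isomorphisms of Corollary \ref{Cor-char-wFO} and Theorem \ref{Th-char-Sch-Op}.
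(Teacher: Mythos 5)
Your proposal is correct and follows essentially the same route as the paper: rapid decay of the STFT forces $\tW S\in\wFA(\rdd)$ for every $s\geq 0$, hence $S\in\wFO$ for every $s$ by Corollary \ref{Cor-char-wFO}, and the conclusion follows from Theorem \ref{Th-char-Sch-Op} together with the isomorphisms $\tW\colon\wFO\to\wFA(\rdd)$. The only difference is one of detail: the paper asserts the implication ``$V_G\tW S$ rapidly decaying $\Rightarrow S\in\wFO$ for all $s$'' in a single line, whereas you justify it explicitly via the change-of-window estimate $|V_\phi(\tW S)|\lesssim |V_g(\tW S)|\ast|V_\phi\gamma|$, which is precisely the step the paper leaves implicit.
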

\begin{proof}
	Let us pick $G\in L^2(\rdd)\smallsetminus\{0\}$. If $V_G \tW{S}$ is rapidly decaying then $S\in\wFO$ for every $s\geq 0$. The claim follows from Theorem \ref{Th-char-Sch-Op}.
\end{proof}

\section*{Acknowledgments}  
The first author would like to thank Eduard Ortega for the financial support to visit Trondheim which led to this work.

\end{document}